\documentclass{article}
\usepackage{geometry}
\usepackage{comment}
\usepackage{mathrsfs}
\usepackage{mathscinet}
\usepackage{hyperref}

% Some arXiv TeX installations report a clash where \iint (and friends)
% is pre-defined by another package (e.g., wasysym). Relax them so amsmath
% can define its integral macros without error.

\usepackage{amsmath}
\usepackage{amsfonts}
\usepackage{amssymb}
\usepackage{amsthm}
\usepackage{wasysym}
\usepackage{graphicx, enumerate, caption, subcaption}
\usepackage{color}
\providecommand{\U}[1]{\protect\rule{.1in}{.1in}}
%EndMSIPreambleData
\geometry{letterpaper}

%\newcommand{\halmos}{\hfill $\Box$}

% Theorem-like environments
\newtheorem{thm}{Theorem}
\newtheorem{lem}{Lemma}

\newtheorem{prop}{Proposition}

% Aliases used elsewhere in the paper

\newtheorem{lemma}[lem]{Lemma}

\newtheorem{proposition}[prop]{Proposition}

\theoremstyle{remark}

\DeclareGraphicsRule{.tif}{png}{.png}{`convert #1 `dirname #1`/`basename #1 .tif`.png}
\renewenvironment{proof}[1][Proof]{\noindent\textbf{#1.} }{\ \rule{0.5em}{0.5em}}
\begin{document}

%%%%%%%%%%%%%%%%

\title{Rates of Convergence to Stationarity for Reflected Brownian Motion}

\author{%
Jose Blanchet\\
Stanford University\\
\texttt{jose.blanchet@stanford.edu}
\and
Xinyun Chen\\
The Chinese University of Hong Kong, Shenzhen\\
\texttt{chenxinyun@cuhk.edu.cn}%
}

\date{}

\maketitle

\begin{abstract}
We provide the first rate of convergence to stationarity analysis for reflected Brownian motion (RBM) as the dimension
 grows under some uniformity conditions. In particular, if the underlying
 routing matrix is uniformly contractive, uniform stability of the drift
 vector holds, and the variances of the underlying Brownian Motion (BM) are
 bounded, then we show that the RBM converges exponentially fast to
 stationarity with a relaxation time of order $O\left( d^{4}(\log(d))^{3}\right)$ as the dimension $d\rightarrow\infty$. Our bound for the relaxation time follows as a corollary of the non-asymptotic bound we obtain for the initial transient effect, which is explicit in terms of the RBM parameters.

\medskip
\noindent\textbf{Keywords:} reflected Brownian motion; rate of convergence; mixing time.
\end{abstract}

\section{Introduction}

Multidimensional Reflected Brownian Motion (RBM) was introduced by Harrison
and Reiman in \cite{HarrisonReiman_1981}. It is one of the most important
models in Operations Research because it can be used to approximate in
distribution a very large class of stochastic networks of interest as the
traffic utilization of the system approaches 100\% (i.e. in heavy traffic).
For example, \cite{HarrisonWilliams_1987b} explains in Section 2 that the
Harrison-Reiman RBM can be used to approximate the workload of virtually any
open network of homogeneous customers under a work-conserving policy under
mild weak convergence conditions on the incoming traffic. See also Chapter 7
of \cite{ChenYao2001} and the references therein. Moreover, it has been shown
that the approximation holds also for the underlying steady-state
distributions in significant generality (see \cite{BudhirajaLee_2009} and
\cite{GamarnikZeevi_2006}). The Harrison-Reiman RBM can also be used, in some special cases, to approximate feedforward networks of heterogeneous customers, see the discussion on page 270, before Theorem 3.1 of \cite{Harrison_1992}. 

In this paper, we study the rate of convergence to stationarity of the
multidimensional Harrison-Reiman RBM (or simply RBM throughout the rest of our
development). We provide the first rate of convergence analysis for RBM as the
dimension $d$ grows under certain uniformity conditions. In particular, if the
underlying routing matrix is uniformly contractive (see Assumption A1),
uniform stability of the drift vector holds (see Assumption A2), and the
variances of the underlying Brownian Motion (BM) are bounded (see Assumption
A3), then we show that the RBM converges exponentially fast to stationarity
with a relaxation time of order $O\left(  d^{4}(\log\left(  d\right)
)^{3}\right)  $ as $d\rightarrow\infty$.

Our result is related to the literature on geometric ergodicity of RBMs. There are two papers \cite{BudhirajaLee_2007} and \cite{Sarantsev_2017} that are mostly close to our work. They established geometric ergodicity results for the same type of RBMs studied in our paper. (Actually, their results cover a more general class of stochastic processes than ours.) But they didn't provide an explicit expression of convergence rate in terms of the RBM parameters or discuss the dependence of the convergence rate on the number of dimensions. In Section 3.2 of \cite{Sarantsev_2017}, an explicit expression of the Lyapunov function is given for RBMs on the positive orthant but with extra restrictions on the reflection matrix of the RBM.

We believe that the results and the techniques introduced in this paper can be
used in the design of polynomial-time (as $d$ increases) Monte Carlo methods
for approximating the steady-state distribution of RBM. This is because, in such algorithms, one needs to know how long to run the simulation to guarantee that the process has reached or become very ``close" to stationarity. Of course, one can heuristically monitor the convergence, but unless one has quantitative rates of convergence, it is difficult to be completely sure that initial transient effects have been controlled. The results in this paper provide explicit guidelines about how long to run simulations for high dimensional stochastic networks modeled via RBM. The paper \cite{MLMC} actually goes beyond this direct implication and uses the analysis in this paper to design efficient Monte Carlo methods for steady-state estimation of RBM. These algorithms, as far as we understand, would be the first of their kind in the
context of numerical methods for the steady-state distribution of RBM,
although polynomial-time convergence algorithms for steady-state simulation of
reflected processes (of compound Poisson type) have been studied in
\cite{BlanchetChen_2015}.

In Section \ref{Section_Notation}, we first introduce our notation and provide
the statement of our main result. Also in Section \ref{Section_Notation}, we
provide a step-by-step outline of the proof of our result. The proof is
divided into three steps, which are developed throughout Sections
\ref{Section_Step_1} to \ref{Section_Step_3}. Finally, in Section
\ref{Sec: conclude}, we discuss the main bottlenecks that need to be overcome
in our arguments in order to improve upon the bounds that we obtain.

\section{Notation, Assumptions and Main Result\label{Section_Notation}}

We start this section by introducing the notation, and then we explain the motivation and definition of RBM and
the assumptions that we shall impose throughout the paper. We concentrate on
the case where $d\geq2$, and the case in which $d=1$ is standard.

\subsection{Notation}

For convenixence, we summarize the common notations used through out the paper.
We shall use boldface to write vector quantities, which are encoded as
columns. For instance, we write $\mathbf{y}=\left(  y_{1},...,y_{d}\right)
^{T}$. We use $\mathbf{1}$ to denote the vector with all entries equal to
unity. We define the following norms of vectors: $\left\Vert \mathbf{y}%
\right\Vert _{\infty}=\max_{i=1}^{d}|y_{i}|$ and $\left\Vert \mathbf{y}%
\right\Vert _{1}=\sum_{i=1}^{d}|y_{i}|$. Let $\partial \mathbb{R}^d_+$ be the boundary of the positive orthant.

We write $1(\cdot)$ to denote the indicator function and $I$ to the identity
matrix. For a $d\times d$ matrix $A$, we let $A^{T}$ be its transposition. For
any subsets $S_{1}$ and $S_{2}$ of $\{1,2,...,d\}$, we write $A_{S_{1}S_{2}}$
as the submatrix of $A$ such that $A_{S_{1}S_{2}}=\{A_{ij}:i\in S_{1},j\in
S_{2}\}$. Similarly, $\mathbf{y}_{S_{1}}=(y_{i}:i\in S_{1})$ and $A_{S_{1}%
}=\{A_{ij}:i\in S_{1},1\leq j\leq d\}$.

All inequalities involving vectors or matrices are understood componentwise.
For example, $\mathbf{y}\geq\mathbf{z}$ means that $y_{i}\geq z_{i}$ for all
$i\in\{1,2,...,d\}$.

For any subset $S$ of $\{1,2,...,d\}$, $\bar{S}$ represents its compliment
set, i.e., $\bar{S}=\{1\leq i\leq d:i\notin S\}$. For all $1\leq i,j\leq d$,
$\delta_{ij}$ is the Kronecker delta, i.e., $\delta_{ij}=1$ if $i=j$, and
$\delta_{ij}=0$ if $i\neq j$. The arrow \textquotedblleft$\Longrightarrow
$\textquotedblright\ represents convergence in distribution. The equality
$A\overset{D}{=}B$ means that $A$ and $B$ are equal in distribution. We use
$N(0,1)$ to refer to a generic standard normal random variable.

\subsection{Motivation, Definition of RBM, and Assumptions}

Let us consider the stochastic fluid network model introduced by
\cite{Kella_1996}. It is a network of $d$ queueing stations indexed by
$\{1,2,...,d\}$. Jobs arrive to the network according to some counting process
$(N\left(  t\right)  :t\geq0)$. The $k$-th arrival brings a vector of job
requirements $\mathbf{W}\left(  k\right)  =\left(  W_{1}\left(  k\right)
,...,W_{d}\left(  k\right)  \right)  ^{T}$, which adds $W_{i}(k)$ units of
workload to the $i$-th station right at the moment of arrival, for
$i\in\{1,...,d\}$.

From the previous description, we know that the total amount of work that
arrives to the $i$-th station, up to and including time $t$, is denoted by
\begin{equation}
J_{i}\left(  t\right)  =\sum_{k=1}^{N\left(  t\right)  }W_{i}\left(  k\right)
. \label{eq: J}%
\end{equation}
Let us now assume that for all $i\in\{1,...,d\}$, the server of station $i$
processes the workload as a fluid at rate $r_{i}>0$. That means, if the
workload in the $i$-th station remains strictly positive during the time
interval $[t,t+h]$, the output from station $i$ during this time interval will
be $r_{i}h$. In addition, for all $1\leq i,j\leq d$, let $Q_{i,j}\geq0$ be the
proportion of the fluid circulated to the $j$-th station, after being
processed by the $i$-th server. The matrix $Q=\left(  Q_{i,j}:1\leq i,j\leq
d\right)  $ is called the routing matrix of the network. Without loss of
generality, we assume that $Q_{i,i}=0$. We introduce an extra notation
$Q_{i,0}=1-\sum_{j=1}^{d}Q_{i,j}\geq0$ to represent the proportion of the
fluid that leaves the network immediately after being processed by the $i$-th
sever. Note that the matrix $Q$ does not include $Q_{i,0}$.

It is natural to assume that arriving jobs will eventually leave the network,
which is equivalent to assuming that $Q^{n}\rightarrow0$ as $n\rightarrow
\infty$; which, in turn, is equivalent to requiring that $Q$ be a strict
contraction in the sense that it has a spectral radius which is strictly less
than one. In other words, one assumes there exists $\beta\in\left(
0,1\right)  $ and $\kappa\in\left(  0,\infty\right)  $ such that:
\begin{equation}
\left\Vert \mathbf{1}^{T}Q^{n}\right\Vert _{\infty}\leq\kappa\left(
1-\beta\right)  ^{n}. \label{Sp_Radius}%
\end{equation}

The dynamics of such a stochastic fluid network can be expressed formally in
differential notation as follows. Let $Y_{i}\left(  t\right)  $ denote the
workload content of the $i$-th station at time $t$, then given $Y_{i}\left(
0\right)  $, we write:%
\begin{align}
dY_{i}\left(  t\right)   &  =dJ_{i}\left(  t\right)  -r_{i}I\left(
Y_{i}\left(  t\right)  >0\right)  dt+\sum_{j:j\neq i}Q_{j,i}r_{j}I\left(
Y_{j}\left(  t\right)  >0\right)  dt\label{S1_b}\\
&  =dJ_{i}\left(  t\right)  -r_{i}dt+\sum_{j:j\neq i}Q_{j,i}r_{j}%
dt+r_{i}I\left(  Y_{i}\left(  t\right)  =0\right)  dt\nonumber\\
&  ~~-\sum_{j:j\neq i}Q_{j,i}r_{j}I\left(  Y_{j}\left(  t\right)  =0\right)
dt\nonumber
\end{align}
for $i\in\{1,...,d\}$. These equations take a neat form in matrix notation.
Let $\mathbf{r}=\left(  r_{1},...,r_{d}\right)  ^{T}$ be the column vector
corresponding to the service rates, and define the so-called \textquotedblleft
reflection matrix\textquotedblright\ as $R=\left(  I-Q\right)  ^{T}$. Let%
\begin{equation}
\mathbf{X}\left(  t\right)  =\mathbf{J}\left(  t\right)  -R\mathbf{r}t,
\label{Aux_Input_1}%
\end{equation}
where $\mathbf{J}\left(  t\right)  $ is a column vector with its $i$-th
component equal to $J_{i}\left(  t\right)  $ as defined in \eqref{eq: J}.
Then, we can see from (\ref{S1_b}) that in the special case of input path
$\mathbf{X}\left(  \cdot\right)  $, given by (\ref{Aux_Input_1}),
$\mathbf{Y}\left(  \cdot\right)  $ solves the so-called Skorokhod problem,
which is posed as follows:

\textbf{Skorokhod Problem: }\textit{Given a process }$\mathbf{X}\left(
\cdot\right)  $\textit{\ and a matrix }$R$\textit{, we say that the pair
}$(\mathbf{Y},\mathbf{L})$\textit{\ solves the associated Skorokhod problem
if}%
\begin{equation}
0\leq\mathbf{Y}\left(  t\right)  =\mathbf{Y}\left(  0\right)  +\mathbf{X}%
\left(  t\right)  +R\mathbf{L}\left(  t\right)  , ~\mathbf{L}(0)=0 \label{SP1}%
\end{equation}
\textit{where the }$i$\textit{-th entry of }$\mathbf{L}\left(  \cdot\right)
$\textit{\ is non-decreasing and }$\int_{0}^{t}Y_{i}\left(  s\right)
dL_{i}\left(  s\right)  =0$\textit{.}

The seminal paper \cite{HarrisonReiman_1981} shows that the Skorokhod problem
is well posed (i.e. it has a unique solution) in the case where the input
$\mathbf{X}\left(  \cdot\right)  $ is continuous and $R$ is a so-called
$M$-matrix. In particular, a matrix $R$ is said to be an $M$-matrix if%
\begin{equation}
R^{-1}\text{ exists and it has non-negative entries.} \label{M_Cond}%
\end{equation}
In our case, $\mathbf{X}(\cdot)$ is a multi-dimensional Brownian motion with
drift vector $\boldsymbol{\mu}$ and covariance matrix $\Sigma:=CC^{T}$, and
hence it is continuous almost surely. The reflection matrix $R=(I-Q)^{T}$ is
indeed an $M$-matrix. The unique solution to the Skorokhod problem when the
input is a $\left(  \boldsymbol{\mu},\Sigma\right)  $-Brownian Motion is
called a $\left(  \boldsymbol{\mu},\Sigma,R\right)  $-RBM.

To understand intuitively why the $M$-condition assumption is very natural,
once again we go back to the stochastic fluid network depicted in (\ref{S1_b})
and note that $R=I-Q^{T}$ being an $M$-matrix is equivalent to requiring that
(\ref{Sp_Radius}) holds.

To appreciate the delicate nature of $\mathbf{L}\left(  \cdot\right)  $, note
that in the setting of the stochastic fluid network depicted in (\ref{S1_b})
we have that%

\begin{equation}
L_{i}\left(  t\right)  =\int_{0}^{t}r_{i}I\left(  Y_{i}\left(  s\right)
=0\right)  ds. \label{L_constr}%
\end{equation}

For general Skorokhod problems, under the $M$-condition and some mild
conditions on $\mathbf{X}\left(  \cdot\right)  $, the assumption that
\begin{equation}
R^{-1}E\mathbf{X}\left(  1\right)  <0, \label{Stability_Cond}%
\end{equation}
implies that $\mathbf{Y}\left(  t\right)  \Longrightarrow\mathbf{Y}\left(
\infty\right)  $ as $t\rightarrow\infty$, where $\mathbf{Y}\left(
\infty\right)  $ is a random variable with the (unique) stationary
distribution of $\mathbf{Y}\left(  \cdot\right)  $. In particular, according
to \cite{HarrisonWilliams_1987b}, condition (\ref{Stability_Cond}) is
necessary and sufficient for stability of the $\left(  \boldsymbol{\mu}%
,\Sigma,R\right)  $-RBM (i.e. a unique stationary distribution exists) under
the $M$-condition (\ref{M_Cond}).

In this paper, we shall consider a family of $\left(  \boldsymbol{\mu}%
,\Sigma,R\right)  $-RBMs indexed by the dimension $d$. Implicitly, then, $R$,
$\boldsymbol{\mu}$, and $\Sigma$ are indexed by their dimension. Our goal is
to derive rates of convergence to stationarity that behave graciously as
$d\rightarrow\infty$ under suitable uniformity conditions, which are stated in
the following assumptions.

\bigskip

\textbf{Assumptions:}

\textbf{A1) Uniform contraction:} We let $R=I-Q^{T}$, where $Q$ is
substochastic and assume that there exists $\beta_{0}\in\left(  0,1\right)  $
and $\kappa_{0}\in\left(  0,\infty\right)  $ independent of $d$ such that%
\begin{equation}
\left\Vert \mathbf{1}^{T}Q^{n}\right\Vert _{\infty}\leq\kappa_{0}(1-\beta
_{0})^{n}. \label{Ass_A1}%
\end{equation}
Under (\ref{Ass_A1}) we observe that
\[
\left\Vert R^{-1}\mathbf{1}\right\Vert _{\infty}\leq b_{1}:=\kappa_{0}%
/\beta_{0}<\infty.
\]

\textbf{A2) Uniform stability:} We write $\mathbf{X}\left(  t\right)
=\boldsymbol{\mu}t+C\mathbf{B}\left(  t\right)  $, where $\mathbf{B}\left(
t\right)  =$ $(B_{1}\left(  t\right)  ,...,B_{d}\left(  t\right)  )^{T}$ and
the $B_{i}\left(  \cdot\right)  $'s are standard Brownian motions, and the
matrix $C$ satisfies $\Sigma=CC^{T}$. We assume that there exists $\delta
_{0}>0$ independent of $d$ such that%
\[
R^{-1}\boldsymbol{\mu}<-\delta_{0}\mathbf{1}.
\]

\textbf{A3) Uniform marginal variability:} Define $\sigma_{i}^{2}=\Sigma
_{i,i}$ (i.e. the variance of the $i$-th coordinate of $\mathbf{X}$). We
assume that there exists $b_{0}\in\left(  0,\infty\right)  $, independent of
$d\geq1$, such that
\[
b_{0}^{-1}\leq\sigma_{i}^{2}\leq b_{0}.
\]

\bigskip

An important constant to be used in the sequel is $\delta
_{1}=\delta_{0}\beta_{0}/(2\kappa_{0})$. This constant will be used in the
introduction of a useful dominating process.

\bigskip

Let us now discuss the nature of our assumptions in terms of the structure of the queueing network that the RBM approximates in heavy traffic. Assumption A1) means that the expected number of stations that a customer, who enters from any station $i$, will visit before leaving the system is bounded in a suitable sense as the size of the network grows. In particular, we can interpret $Q$ as the transition matrix of an absorbing Markov chain (as we define and denote by $M$ in Section \ref{Section_Step_1}). The absorbing state can be interpreted as leaving the network. If we place a customer uniformly at random in any station (denote this distribution by $\upsilon _{d}$) and let $\rho $ be the number of
transitions of the chain (i.e. the number of stations) before being absorbed (i.e. before leaving the network). Then, Assumption A1) could be replaced by the more easy-to-interpret assumption that $P_{\upsilon_{d}}\left( \rho >n\right) \leq \kappa _{0}\left( 1-\beta _{0}\right) ^{n}$.  For example, all networks in which customers will leave the system  with positive probability $\beta_0$ after been served at each station will satisfy Assumption A1). In this case, the number of stations that the customer will visit before leaving will be stochastically bounded by a geometric random varaible with mean $1/ \beta_0$.  Other examples could be large networks in which the number of stations a customer will visit is much smaller in expectation or very light-tailed than the size of the network.  A family of networks which seem to exhibit these features are those arising in the queueing models for patient flow in hospitals (see for example \cite{Armony_2018} and \cite{Creemer_2010}). Assumption A2) means that all stations in all the networks reach heavy traffic in a uniform way, i.e. $r_i - \sum_{j}R^{-1}_{ji}E[J(1)]> \delta_0/\sqrt{n}$ for all station $i$. The lower bound in A3) simply avoids
degeneracies. Assumptions A2) and A3) guarantee  the tightness of the
marginal steady-state distributions of the workload in each station in a uniform way. %As a result, the expected total workload in the network ($\|\mathbf{Y}(\infty)\|_1$) will grow linearly in the number of stations $d$.
%Under (\ref{Stability_Cond}), as mentioned earlier, there is a unique stationary
%distribution for the process $\mathbf{Y}$. Assumptions A1) and A2) are natural
%uniform extensions of (\ref{Sp_Radius}) and (\ref{Stability_Cond}). Assumption
%A3), we believe, is also natural. The lower bound in A3) simply avoids
%degeneracies. The upper bound can be seen as an assumption of tightness of the
%marginal steady-state distributions uniformly in $d$.

\subsection{The Main Result: Statement}

In order to quantify the rate of convergence to stationarity of RBM, we shall
use Wasserstein's distance. Let us define
\[
\mathcal{L}=\{f:\mathbb{R}^{d}\rightarrow \mathbb{R}\text{ such that }\left\vert f\left(
x\right)  -f\left(  y\right)  \right\vert \leq\left\Vert x-y\right\Vert
_{\infty}\}.
\]
In other words, $\mathcal{L}$ is the set of Lipschitz continuous functions on
$R^{d}$ with the Lipschitz constant equal to one under the uniform norm.
Suppose that the random variable $\mathbf{U}\in R^{d}$ has distribution
$\upsilon$ in $R^{d}$ and that $\mathbf{V}\in R^{d}$ has distribution $\varpi
$. The associated Wasserstein distance (of order 1) between $\upsilon$ and
$\varpi$ is defined as
\[
d_{W}\left(  \upsilon,\varpi\right)  =\sup_{f\in\mathcal{L}}\left\vert
Ef\left(  \mathbf{U}\right)  -Ef\left(  \mathbf{V}\right)  \right\vert .
\]
With a slight abuse of notation, we shall actually write $d_{W}\left(
\mathbf{U},\mathbf{V}\right)  $ instead of $d_{W}\left(  \upsilon
,\varpi\right)  $. We have chosen the Wasserstein distance of order 1 because
in the stochastic network setting (which provides some of the main
applications motivating the use of RBM), Lipschitz continuous functions of the
underlying process are natural quantities to study. Examples of these
functions include the maximum workload and the total workload in a subset of
stations in the network. Our results, therefore, allow us to immediately
quantify initial transient errors in expectations of this sort.

Our main result is the following:

\begin{thm}
\label{Thm_Main}Suppose $\mathbf{Y}(0)=\mathbf{y}\in \mathbb{R}_+^d$. Under assumptions A1) to A3),
%if for $d\geq d_{0}$ there exists
%a  constant $\beta >0$ satisfying
%\begin{equation*}
%\min \left( \frac{\min_{1\leq i\leq d}P\left( \{X_{i}\left( 1\right)
%<-b_{1}\}\right) }{\max_{1\leq i\leq d}P\left( \{X_{i}\left( 1\right)
%<-b_{1}\}\right) },\beta _{0}\right) >\beta,
%\end{equation*}%
%\textcolor{red}{Is the above assumption obtainable from A2 and A3?}
for any $\beta\in\left(  0,\min(\beta_{0},1/3\right)  \cdot1/3)$ satisfying,%
\begin{equation}
P\left(  N\left(  0,1\right)  <\sqrt{b_{0}}(\delta_{0}-b_{1}^{2})\right)
\geq\beta/d, \label{Geo_1}%
\end{equation}
we have that for all $d$ large enough and each $t>0$,%
\begin{align}
d_{W}\left(  \mathbf{Y}\left(  t\right)  ,\mathbf{Y}\left(  \infty\right)
\right)   &  \leq3\cdot d^{2}\cdot\exp\left(  -\zeta_{1}\cdot\frac{t}{(d^{4}%
\log\left(  d\right)  )}\right) \nonumber\label{eq: thm1}\\
&  \cdot\left(  \kappa_{0}\cdot\left\Vert \mathbf{y}\right\Vert _{1}\cdot
\exp\left(  \zeta_{0}\cdot\frac{\left\Vert \mathbf{y}\right\Vert _{\infty}%
}{d^{3}\log\left(  d\right)  }\right)  +\frac{\kappa_{0}}{\delta_{0}\beta_{0}}b_{0}b_1\right)  ,
\end{align}
where $\zeta_{0}$ and $\zeta_{1}$ are two constants independent of $d$:
\[
\zeta_{0}=\frac{\delta_{1}\cdot\beta^{2}}{2\max_{i=1}^{d}\sigma_{i}^{2}%
},\text{ \ }\zeta_{1}=\frac{\delta_{1}^{2}\cdot\beta^{2}}{16\max_{i=1}%
^{d}\sigma_{i}^{2}}.
\]
In particular, the relaxation time of RBM is of order $O\left(  d^{4}\left(
\log\left(  d\right)  \right)  ^{3}\right)  $ if $\|\mathbf{y}\|_1 = O(1)$. (The relaxation time
is the minimum of $t$ such that
$d_{W}\left(  \mathbf{Y}\left(  t \right)  ,\mathbf{Y}\left(
\infty\right)  \right)  \leq \frac{1}{2} d_W(\mathbf{Y}(0), \mathbf{Y}(\infty))$.)
\end{thm}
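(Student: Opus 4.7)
The plan is to bound the Wasserstein distance via a coupling: for any joint realization of $\mathbf{Y}(t)$ with a stationary copy $\mathbf{Y}^{\ast}(t)$, $d_{W}(\mathbf{Y}(t),\mathbf{Y}(\infty))\leq E\|\mathbf{Y}(t)-\mathbf{Y}^{\ast}(t)\|_{\infty}$. A naive synchronous coupling does not force two RBMs to coalesce, so the overall strategy is to split the evolution into a stability phase, driving both copies into a moderate neighbourhood of the origin, followed by a coupling phase that uses the reflecting boundary itself as the coalescence mechanism: whenever a coordinate $i$ is pushed to zero, the local time $L_{i}$ can be used to erase its initial-condition dependence. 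This matches the three-section outline, which I expect to proceed as (1) construction of a componentwise dominating process, (2) exponential tail / Lyapunov control for it, and (3) the coalescence argument itself.

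For (1), since $R=(I-Q)^{T}$ with $Q$ sub-stochastic, $R^{-1}$ is nonnegative and the Skorokhod map is monotone in $\mathbf{X}$. Combining $R^{-1}\boldsymbol{\mu}<-\delta_{0}\mathbf{1}$ with $\|R^{-1}\mathbf{1}\|_{\infty}\leq\kappa_{0}/\beta_{0}$ lets one dominate $Y_{i}$ coordinatewise by a one-dimensional RBM $Z_{i}$ whose drift is at most $-\delta_{1}=-\delta_{0}\beta_{0}/(2\kappa_{0})$. Because $\delta_{1}$ is independent of $d$, each $Z_{i}$ admits an exponentially integrable stationary distribution whose decay rate does not deteriorate with dimension, which is what propagates the $d$-robustness through the rest of the argument. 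For (2), standard exponential-martingale / Lyapunov estimates applied to $\mathbf{Z}$ then give exponentially decaying tail bounds on $\|\mathbf{Y}(t)\|_{\infty}$ (with a union bound across coordinates contributing the linear $d$ prefactor), from which one extracts the prefactor $\kappa_{0}\|\mathbf{y}\|_{1}\exp(\zeta_{0}\|\mathbf{y}\|_{\infty}/(d^{3}\log d))+\kappa_{0}^{1/2}b_{0}^{1/2}/(\delta_{0}^{1/2}\beta_{0}^{1/2})$ of the theorem. These estimates should also show that after a burn-in period of order $d^{3}\log d$ both $\mathbf{Y}$ and $\mathbf{Y}^{\ast}$ lie in a compact region of diameter polynomial in $d$ with overwhelming probability.

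For (3), inside this region I would couple in successive time windows. Assumption A3 together with hypothesis (\ref{Geo_1}) supplies a per-window, per-coordinate probability at least $\beta/d$ of pushing a coordinate to zero, where the local time is then used to lock it to its stationary counterpart; the appearance of $\beta^{2}$ in both $\zeta_{0}$ and $\zeta_{1}$ strongly suggests that (\ref{Geo_1}) is invoked via a Gaussian lower bound on an idle Brownian increment. A coupon-collector bound over the $d$ coordinates then requires order $(d/\beta)\log d$ such windows to couple them all simultaneously with probability at least $1/2$, and combined with the length of each window, set by the ballistic return time of the dominating process in the confinement region produced by step (2), this yields the announced relaxation time of order $d^{4}(\log d)^{2}$. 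The main obstacle, I expect, lies in exactly this coupling step: because $R$ is non-diagonal, the local-time pushes on coordinates that have not yet coalesced propagate through $R$ into coordinates that have, threatening to break the coupling already achieved. Controlling this cross-talk via the uniform contraction hypothesis (\ref{Ass_A1}), so that already-coupled coordinates remain coupled while the others are being synchronized, is the delicate point on which the final exponent $d^{4}$ should hinge.
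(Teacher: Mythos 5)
Your proposal correctly anticipates several ingredients of the paper (a dominating process with uniformly negative drift $-\delta_{1}$, Lyapunov/exponential-martingale tail control, a geometric-trials/coupon-collector argument whose success probability is exactly the Gaussian quantity in (\ref{Geo_1})), but it is built around a coalescence mechanism that you yourself flag as unresolved, and that unresolved step is precisely where the argument breaks. You propose to ``lock'' coordinate $i$ to its stationary counterpart once its local time acts, and then to control the ``cross-talk'' by which local-time pushes on not-yet-coupled coordinates propagate through $R$ and destroy the coupling of already-coupled ones. You offer no mechanism for this, and indeed exact coordinatewise coalescence of two RBMs driven by the same Brownian motion does not occur in finite time in general: the oblique reflection keeps re-injecting the discrepancy into every coordinate through the off-diagonal entries of $R$. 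So as written the coupling phase of your proof does not close.

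The paper's resolution is to abandon exact coalescence entirely. It keeps the plain synchronous coupling for all time and instead proves a quantitative \emph{contraction} of the $\ell_{1}$-difference: using monotonicity of the Skorokhod map in the initial condition, it reduces to comparing each copy with the process started from $\mathbf{0}$, and then shows (Lemmas \ref{lem: Lambda}--\ref{Lem_DB1}) that each time a coordinate of the upper process hits zero the difference vector is multiplied by an explicit substochastic matrix $\Lambda^{T}(\bar{\mathcal{C}}(s))$, identified with hitting probabilities of an absorbing Markov chain with transition matrix $Q$. After $\mathcal{N}(t;\mathbf{y})$ rounds in which \emph{every} coordinate has visited zero, the product of these matrices has norm at most $d\kappa_{0}(1-\beta_{0})^{\mathcal{N}(t;\mathbf{y})}$ by Assumption A1 --- this is how the uniform contraction hypothesis actually enters, rather than by protecting already-coupled coordinates. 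The remaining work (your steps (1)--(3), which are close in spirit to the paper's Step 2) is then devoted to bounding $E[(1-\beta_{0})^{\mathcal{N}(t;\mathbf{y})}]$ via the dominating process, the moment-generating-function bound for the return time to a compact set, and a supermartingale argument; the exponent $d^{4}\log d$ comes out of the choice $\theta\sim\beta^{2}/(d^{3}\log d)$, $\chi(\theta)\sim\beta^{2}/(d^{4}\log d)$ in that analysis, not from a union bound over coupling windows. The missing idea in your proposal is therefore the contraction lemma of Step 1; without it (or a genuine substitute for the coalescence step), the proof is incomplete.
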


\bigskip

\textbf{Remark:} As the bound \eqref{eq: thm1} is explicit, we can actually relax the uniform assumptions A1) to A3) allowing the constants $\beta_0$, $\kappa_0$ and $\delta_0$ to increase with $d$, and obtain a similar bound in which these constants are functions of $d$. However, we can not set the constants to be any functions of $d$. This is because, for Theorem 1 to hold, $b_0$ and $b_1$ must be chosen such that  (\ref{Geo_1}) is satisfied, which is crucial to the construction of $\chi(\theta)$ in the Lyapunov function in Lemma \ref{Lem_LB_CT} and to the boundedness of the number of geometric trials in Step 2.2 in our proof, as we shall explain later in Section \ref{Sect_Outline_Strategy}. As $P\left(  N\left(  0,1\right)  <\sqrt{b_{0}}(\delta_{0}-b_{1}^{2})\right)=O(\exp(-b_0 (b^2_1-\delta_0 )^2))$, we must have $b_0(b_1^2-\delta_0)^2= O(\log(d))$. For example, if we relax only Assumption A1),  we can make $b_{1}=O\left(  \log\left(  d\right)
^{1/4}\right)$ and allow $\beta = O(d^{-\gamma})$ for some $\gamma>0$. Then, we still obtain that the relaxation time $= O(d^{4+2\gamma}\log(d)^3)$ is
polynomial in $d$ (assuming that the rest of assumptions remain in place). Besides, we didn't specify the dependence of the initial state $\mathbf{y}$ on $d$. As the bound in \eqref{eq: thm1} is explicit in $\mathbf{y}$, readers can obtain a new bound according to their choice of the growth rate of $\mathbf{y}$ in $d$. For example,  the polynomial relaxation time will still hold if $\|\mathbf{y}\|_\infty
= O(d^3)$ (and hence $\|\mathbf{y}\|_1 = O(d^4)$).

\subsection{The Main Result: Strategy of the
Proof\label{Sect_Outline_Strategy}}

We first explain the main steps in the proof of Theorem \ref{Thm_Main}. All
the details, including the technical lemmas will be given in the following sections.

\textbf{Step 0:} We start by considering a natural coupling. Given the
underlying $\left(  \boldsymbol{\mu},\Sigma\right)  $-Brownian motion
$\mathbf{X}\left(  \cdot\right)  $, we consider the $\left(  \boldsymbol{\mu
},\Sigma,R\right)  $-RBM, $\mathbf{Y}\left(  \cdot\right)  $, obtained by
solving the Skorokhod problem with reflection matrix $R$ in (\ref{SP1}). In
order to emphasize the dependence on the initial condition, we will also write
$\mathbf{Y}\left(  t;\mathbf{Y}\left(  0\right)  \right)  :=\mathbf{Y}\left(
t\right)  $. Now let us use $\mathbf{Y}\left(  \infty\right)  $ to denote a
random variable with the stationary distribution of $\mathbf{Y}\left(
\cdot\right)  $ but independent of $\mathbf{X}\left(  \cdot\right)  $. We then
have, by stationarity, that
\[
\mathbf{Y}\left(  \infty\right)  \overset{D}{=}\mathbf{Y}\left(
t;\mathbf{Y}\left(  \infty\right)  \right)  .
\]
We consider the process $\mathbf{Y}\left(  \cdot;\mathbf{Y}\left(  0\right)
\right)  $ coupled with $\mathbf{Y}\left(  \cdot;\mathbf{Y}\left(
\infty\right)  \right)  $, where the driving signal, $\mathbf{X}\left(
\cdot\right)  $, is common to both processes, but the initial conditions are different.

Note that for any $f\in\mathcal{L}$,
\[
|Ef(\mathbf{Y}\left(  t;\mathbf{Y}\left(  0\right)  \right)  )-Ef(\mathbf{Y}%
\left(  t;\mathbf{Y}\left(  \infty\right)  \right)  )|\leq E\left\vert
\left\vert \mathbf{Y}\left(  t;\mathbf{Y}\left(  0\right)  \right)
-\mathbf{Y}\left(  t;\mathbf{Y}\left(  \infty\right)  \right)  \right\vert
\right\vert _{1}%
\]
and hence
\begin{equation}
d_{W}(\mathbf{Y}\left(  t;\mathbf{Y}\left(  0\right)  \right)  ,\mathbf{Y}%
\left(  t;\mathbf{Y}\left(  \infty\right)  \right)  )\leq E\left\vert
\left\vert \mathbf{Y}\left(  t;\mathbf{Y}\left(  0\right)  \right)
-\mathbf{Y}\left(  t;\mathbf{Y}\left(  \infty\right)  \right)  \right\vert
\right\vert _{1}. \label{Was_1}%
\end{equation}
Therefore, to prove Theorem 1, it suffices to show that
\[
E\left\vert \left\vert \mathbf{Y}\left(  t;\mathbf{Y}\left(  0\right)
\right)  -\mathbf{Y}\left(  t;\mathbf{Y}\left(  \infty\right)  \right)
\right\vert \right\vert _{1}%
\]
can be bounded by the right hand side of (\ref{eq: thm1}). We shall do this
through the following steps.

\textbf{Step 1: }The first step in the proof involves bounding
\[
\left\Vert \mathbf{Y}\left(  t;\mathbf{Y}\left(  0\right)  \right)
-\mathbf{Y}\left(  t;\mathbf{Y}\left(  \infty\right)  \right)  \right\Vert
_{1}.
\]
Next we consider a sequence $\{\eta^{k}(\mathbf{y})\}$ which intuitively are
the stopping times by which the RBM, starting from time $0$ at $\mathbf{y}%
\in\mathbb{R}^{d}$, has experienced $k$ ``rounds" and in each round it hits 0 in all the $d$ dimensions for at least once. In detail, we define $\eta^{0}\left(  \mathbf{y}\right)  =0$ and recursively%
\begin{align}
\eta_{i}^{k}\left(  \mathbf{y}\right)   &  =\inf\{t>\eta^{k-1}\left(
\mathbf{y}\right)  +1:Y_{i}\left(  t;\mathbf{y}\right)  =0\},\label{eq: eta}\\
\eta^{k}\left(  \mathbf{y}\right)   &  =\sup\{\eta_{i}^{k}\left(
\mathbf{y}\right)  :1\leq i\leq d\}.\nonumber
\end{align}
Intuitively, $\eta^k(\mathbf{y})$ is the time by which the RBM has just experienced $k$ rounds. We then define
\[
\mathcal{N}\left(  t;\mathbf{y}\right)  =\sup\{k\geq0:\eta^{k}\left(
\mathbf{y}\right)  \leq t\}
\]
as the number of rounds the RBM has experienced by time $t$. We will show in Lemma \ref{Lem_DB1} that after each round, at least a fixed proportion of initial jobs ($=O( \beta_0) $), assuming that they have lower service priority than all jobs arrive after time 0, have left the system. For example, consider a 2-station network with routing probability matrix 
$$
Q =\left(\begin{matrix}
0&1\\0.9&0\\
\end{matrix}\right).
$$
In a single round, if station 1 goes to 0 first and then station 2 goes to 0, we have that $10\%$ of the initial jobs in station 1 and station 2, respectively, have left. As a consequence, we obtain%
\begin{align}
&  \left\Vert \mathbf{Y}\left(  t;\mathbf{Y}\left(  \infty\right)  \right)
-\mathbf{Y}\left(  t;\mathbf{Y}\left(  0\right)  \right)  \right\Vert
_{1}\label{Der_1}\\
&  \leq\left\Vert \mathbf{Y}\left(  t;\mathbf{Y}\left(  \infty\right)
\right)  -\mathbf{Y}\left(  t;\mathbf{0}\right)  \right\Vert _{1}+\left\Vert
\mathbf{Y}\left(  t;\mathbf{Y}\left(  0\right)  \right)  -\mathbf{Y}\left(
t;\mathbf{0}\right)  \right\Vert _{1}\nonumber\\
&  \leq d\cdot\kappa_{0}\cdot(\left(  1-\beta_{0}\right)  ^{\mathcal{N}\left(
t;\mathbf{Y}\left(  \infty\right)  \right)  }\left\Vert \mathbf{Y}\left(
\infty\right)  \right\Vert _{1}+\left(  1-\beta_{0}\right)  ^{\mathcal{N}%
\left(  t;\mathbf{Y}\left(  0\right)  \right)  }\left\Vert \mathbf{Y}\left(
0\right)  \right\Vert _{1}).\nonumber
\end{align}
%
%Bound (\ref{Der_1}) can be obtained in several ways. One of them is by using
%the fact that the Skorokhod map is differentiable with respect to the
%initial condition (see \cite{MandelbaumRamanan_2010}); then one needs to
%study the derivative process. The other approach, which we use here, is by
We obtain \eqref{Der_1} based on some elementary estimates following the
analysis in \cite{KellaRamasubramanian_2012}. We actually apply part (iv)
of Theorem 1 in \cite{KellaRamasubramanian_2012}, which states that if two RBMs are only different in their
intial values, their difference is non-increasing in time. Intuitively, we show in Lemma \ref{Lem:DM} that when
one of the coordinates has hit zero for at least once, the difference
$\mathbf{Y}\left(  t;\mathbf{Y}\left(  \infty\right)  \right)  -\mathbf{Y}%
\left(  t;\mathbf{Y}\left(  0\right)  \right)  $ shrinks by a factor which can
be expressed in terms of a suitable substochastic matrices.

\textbf{Step 2: }Combining (\ref{Was_1})\ and (\ref{Der_1}), it is easy to see
that the key to our estimates involves bounding $E\left[  \left(  1-\beta
_{0}\right)  ^{\mathcal{N}\left(  t;\mathbf{y}\right)  }\right]  $ and
$E[\left\Vert \mathbf{Y}\left(  \infty\right)  \right\Vert _{1}]$.

At this point, we invoke a sample-path upper bound $\mathbf{Y}^{+}%
(t;\mathbf{y})$ (introduced in \cite{Kella_1996}) for $\mathbf{Y}\left(
t;\mathbf{y}\right)  $ and its formal definition is given in Section
\ref{Subsect_tech_pfs}. In particular, $\mathbf{Y}^{+}\left(  \cdot
;\mathbf{y}\right)  $ is also an RBM having the same covariance matrix as
$\mathbf{Y}(t,\mathbf{y})$ and an identity reflection matrix. According to
Lemma 3.1 in \cite{Kella_1996}, $\mathbf{Y}^{+}\left(  \cdot;\mathbf{y}%
\right)  $ dominates $\mathbf{Y}(t;\mathbf{y})$ in the sense that
$R^{-1}\mathbf{Y}^{+}(t;\mathbf{y})\geq R^{-1}\mathbf{Y}\left(  t;\mathbf{y}%
\right)  $ for all $t$. Besides, $\mathbf{Y}^+(\cdot;\mathbf{y})$ has a unique
stationary distribution regardless of the initial condition $\mathbf{y}$. Let
$\mathbf{Y}^{+}\left(  \infty\right)  $ follow the stationary distribution of
$\mathbf{Y}^{+}\left(  \cdot\right)  $, then it is well-understood that
$Y_{i}^{+}\left(  \infty\right)  $
%the $i$-th entry of
%the vector $\mathbf{Y}\left( \infty \right) $, satisfies
%\begin{equation*}
%Y_{i}^{+}\left( \infty \right) \overset{D}{=}\max_{t\geq 0}\bar{X}_{i}\left(
%t\right) ,\end{equation*}%
%which is known to be exponentially distributed with
follows an exponential distribution with mean $E[Y_{i}^{+}\left(
\infty\right)  ]=\sigma_{i}^{2}/2\left(  \mu_{i}^{+}-\mu_{i}\right)  $
marginally. Therefore, using Assumptions A1) - A3), one can show that
$\sup_{i\geq1}E[Y_{i}^{+}\left(  \infty\right)  ]<\infty$. This upper bound
process, together with Steps 1 and 2, already hints at the polynomial-time
nature of the relaxation time. For example, if $\Sigma$ is diagonal, a
straightforward calculation shows that $E[\max_{1\leq i\leq d}Y_{i}^{+}\left(
\infty\right)  ]=O\left(  \log\left(  d\right)  \right)  $. On the other hand,
starting from equilibrium, in a time interval of order $O\left(  d\right)  $
the maximum coordinate fluctuates at most $O\left(  \log\left(  d\right)
\right)  $ units, while, with very high probability, all coordinates will hit
zero at least once during this time (due to the negative drift of the
underlying Brownian motion driving $\mathbf{Y}^{+}$). One might expect that
the coordinates of the lower bound process would also have visited zero during
this time. However, such a reasoning is not implied by the type of domination
that can be guaranteed between $\mathbf{Y}^{+}\left(  t;\mathbf{y}\right)  $
and $\mathbf{Y}\left(  t;\mathbf{y}\right)  $. In addition, the matrix
$\Sigma$ is not diagonal. So, due to all of these complications, the
quantitative bounds become somewhat involved. The strategy to bound
$E[(1-\beta_{0})^{\mathcal{N}\left(  t;\mathbf{y}\right)  }] $ is split into
several substeps.

\textbf{Step 2.1 (estimating the time to visit a compact): }First, we define
$\tau^{+}\left(  \mathbf{y}\right)  =\inf\{t\geq0:\mathbf{Y}^{+}\left(
t;\mathbf{y}\right)  \leq\mathbf{1}\}$. We define a suitable function
$h(\mathbf{y};\theta)\geq0$ which behaves like $\theta\left\Vert
\mathbf{y}\right\Vert _{\infty}$ for small $\theta$. For each $\theta$ small
enough, we can find $\chi\left(  \theta\right)  >0$ such that%
\[
E\left[  \exp\left(  \chi\left(  \theta\right)  \tau^{+}\left(  \mathbf{y}%
\right)  \right)  \right]  \leq\exp\left(  h\left(  \mathbf{y};\theta
,\varepsilon\right)  \right)  ,
\]
and $h\left(  \mathbf{y};\theta,\varepsilon\right)  +\chi\left(
\theta\right)  \rightarrow0$ as $\theta\rightarrow0$. It turns out that
$\chi\left(  \theta\right)  =O\left(  \theta/d\right)  $. Step 2.1 is executed
by means of a suitable Lyapunov argument in Lemma \ref{Lem_LB_CT}.

\textbf{Step 2.2 (geometric trials for visits to zero):} Step 2.1 allows us to
estimate the time until all of the components of the process $\mathbf{Y}%
\left(  \cdot\right)  $ are inside a compact set (this is due to the
domination property of $\mathbf{Y}^{+}$ and Assumption A2)). Then, using a
geometric trial argument, we estimate the time it takes for the $d$%
-coordinates of process $\mathbf{Y}$ to visit zero (i.e. when $\eta^{1}\left(
\mathbf{y}\right)  $, defined in Step 1, occurs). This estimate is somewhat
analogous to a coupon collection problem. In detail, suppose there are in total $d$ different types of coupons, the coupon collection problem deals with the number of coupons one needs to collect before obtaining all the coupon types (see page 61 of \cite{Feller_1968}). In our setting, the $i$-th type of coupon is collected when
the $i$-th coordinate, $Y_{i}$, visits zero and we want to estimate the time needed to collect all $d$ types of coupons. 

Assumptions A1) to A3) allow us to obtain  in Lemma \ref{Lem:Coupon} suitably uniform estimates on the
probability that a particular type of coupon is collected conditional on the event
that a given set of coupons has already been collected. But one has to keep
track of the coordinates of the upper bound process each time one attempts to
collect a new type of coupon. We do this by a stochastic domination argument. In Lemma \ref{Lemma_Stoch_Dom}, we obtain a coupling which implies the bound $\eta^{n}\left(
\mathbf{y}\right)  \leq\tau^{+}\left(  \mathbf{y}\right)  +\xi_{1}+...+\xi
_{n}$ where $\xi_{i}$'s are some i.i.d. positive random variables independent
of $\tau^{+}(\mathbf{y})$.
Then, we obtain  in Lemma \ref{Lem_LB_DT} a
bound of the following form:
\[
E\left[  \exp\left(  \chi\left(  \theta\right)  \tau^{+}\left(  \mathbf{y}%
\right)  +\chi\left(  \theta\right)  \xi_{1}\right)  \right]  \leq\exp\left(
h\left(  \mathbf{y};\theta,\varepsilon\right)  \right)  E\left[  \exp\left(
\chi\left(  \theta\right)  \xi_{1}\right)  \right]  .
\]

\textbf{Step 2.3 (connecting back to }$\mathcal{N}\left(  t;\mathbf{y}\right)
$\textbf{):} A standard supermartingale argument, using the domination
involving i.i.d. random variables, $\xi_{i}$'s, as discussed in Step 2.2, results
in the bound in Lemma \ref{Lem_LB_Nt_Bound},
\[
E\left(  1-\beta_{0}\right)  ^{\mathcal{N}\left(  t;\mathbf{y}\right)
}=O\left(  \exp\left(  h\left(  \mathbf{y};\theta,\varepsilon\right)
-\chi\left(  \theta\right)  t\right)  \right)  ,
\]
which holds uniformly in $d$ as $t\rightarrow\infty$ -- assuming that $\theta$
is suitably chosen as a function of $\beta_{0}$. It turns out that the
selection of $\theta$ forces $\chi\left(  \theta\right)  =O\left(
1/(d^{4}\log\left(  d\right)  )\right)  $.

\textbf{Step 3: }We conclude the result by putting all of the previous steps together.

\section{Step 1: Bounding the Difference of the Coupled
Processes\label{Section_Step_1}}

Here, we introduce an auxiliary Markov chain $\left(  M\left(  n\right)
:n\geq0\right)  $ living on the state space $\{0,1,...,d\}$ so that $P\left(
M\left(  n+1\right)  =j\mid M\left(  n\right)  =i\right)  =Q_{i,j}$ for $1\leq
i,j\leq d$. State $0$ is an absorbing state and $P\left(  M\left(  n+1\right)
=0\mid M\left(  n\right)  =i\right)  =Q_{i,0}=1-\sum_{j=1}^{d}Q_{i,j}$. We use
$P_{i}$ to refer to the probability law given that $M(0)=i$. For any subset
$S\subseteq\{1,...,d\}$, we define
\begin{align*}
\tau\left(  S\right)   &  =\inf\{n\geq0:M\left(  n\right)  \in S\},\text{
and}\\
\tau\left(  \{0\}\right)   &  =\inf\{n\geq0:M\left(  n\right)  =0\}.
\end{align*}
Define the $d\times d$ matrix $\Lambda\left(  S\right)  $ as
\[
\Lambda_{i,j}\left(  S\right)  =P_{i}\left(  \tau\left(  S\right)
<\tau\left(  \{0\}\right)  ,M\left(  \tau\left(  S\right)  \right)  =j\right)
\]
for $i,j\in\{1,...,d\}$.

\textbf{Remark:} The main purpose of introducing the auxiliary Markov chain $M(\cdot)$ in the proof is to provide a representation of the matrix $\Lambda(\cdot)$ as transition probabilities of $M$, such that we can derive the upper bound for the difference of the two coupled RBMs in terms of absorbing probabilities as in Lemma \ref{Lem_DB1}. Therefore, we are only interested in the distributional information of $M$ and do not need to couple it with the RBM in the same probability space.

\begin{lemma}
\label{lem: Lambda} The matrix $\Lambda(S)$ can be represented as
\[
\Lambda(S)=\left(
\begin{array}
[c]{cc}%
\Lambda_{SS} & \Lambda_{S\bar{S}}\\
\Lambda_{\bar{S}S} & \Lambda_{\bar{S}\bar{S}}%
\end{array}
\right)  =\left(
\begin{array}
[c]{cc}%
I & 0\\
-(R_{S\bar{S}}R_{\bar{S}\bar{S}}^{-1})^{T} & 0
\end{array}
\right)  .
\]
As a result,
\[
\Lambda^{T}(S)=\left(
\begin{array}
[c]{cc}%
I & -R_{S\bar{S}}R_{\bar{S}\bar{S}}^{-1}\\
0 & 0
\end{array}
\right)  .
\]

\end{lemma}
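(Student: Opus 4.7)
The plan is to compute each block of $\Lambda(S)$ separately and then translate the expressions back into the $R$-notation using $R=(I-Q)^T$.

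First I would handle the trivial blocks. If $i\in S$, then $\tau(S)=0$ almost surely under $P_i$ and $W(\tau(S))=i$, so $\Lambda_{ij}(S)=\delta_{ij}$ for all $j$. This immediately gives $\Lambda_{SS}=I$ and $\Lambda_{S\bar S}=0$. Next, by definition $W(\tau(S))\in S$ on the event $\{\tau(S)<\tau(\{0\})\}$, so whenever $j\in \bar S$ we have $\Lambda_{ij}(S)=0$; hence $\Lambda_{\bar S\bar S}=0$.

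The real content is the $\Lambda_{\bar S S}$ block. For $i\in\bar S$ and $j\in S$, I would apply a standard first-step analysis: conditioning on $W(1)$ and using $Q_{i,0}=1-\sum_k Q_{i,k}$ for the absorption probability at $0$, one obtains
\begin{equation*}
\Lambda_{\bar S S}=Q_{\bar S S}+Q_{\bar S\bar S}\Lambda_{\bar S S}.
\end{equation*}
Since $Q$ is substochastic with spectral radius strictly less than one (by Assumption A1 applied to the induced submatrix), $I-Q_{\bar S\bar S}$ is invertible with nonnegative inverse, so
\begin{equation*}
\Lambda_{\bar S S}=(I-Q_{\bar S\bar S})^{-1}Q_{\bar S S}.
\end{equation*}

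Finally I would rewrite this in terms of $R$. Because $R=(I-Q)^T$, the transposition swaps the roles of the index sets: $R_{\bar S\bar S}=(I-Q_{\bar S\bar S})^T$ and $R_{S\bar S}=-(Q_{\bar S S})^T$. Therefore
\begin{equation*}
R_{S\bar S}R_{\bar S\bar S}^{-1}=-(Q_{\bar S S})^T\bigl((I-Q_{\bar S\bar S})^{-1}\bigr)^T=-\bigl((I-Q_{\bar S\bar S})^{-1}Q_{\bar S S}\bigr)^T=-\Lambda_{\bar S S}^T,
\end{equation*}
which rearranges to $\Lambda_{\bar S S}=-(R_{S\bar S}R_{\bar S\bar S}^{-1})^T$, yielding the stated block form. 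The transpose expression for $\Lambda^T(S)$ is then immediate. There is no real obstacle; the only point to be careful about is the index bookkeeping between $Q$ and $R=I-Q^T$ when restricting to the $S/\bar S$ blocks, and the invertibility of $I-Q_{\bar S\bar S}$, which follows from the strict contractivity of $Q$.
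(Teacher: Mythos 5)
Your proposal is correct and follows essentially the same route as the paper: the trivial blocks are identified the same way, and the key block $\Lambda_{\bar S S}=(I-Q_{\bar S\bar S})^{-1}Q_{\bar S S}$ is obtained via first-step analysis, which is just the resolvent form of the Neumann-series expansion the paper writes out, followed by the same index translation $R_{\bar S\bar S}=(I-Q_{\bar S\bar S})^{T}$, $R_{S\bar S}=-(Q_{\bar S S})^{T}$. The invertibility of $I-Q_{\bar S\bar S}$, which you correctly flag, follows from the spectral radius of the principal submatrix $Q_{\bar S\bar S}$ being bounded by that of $Q$.
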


Recall that we have defined a sequence of stopping times $\eta_{i}%
^{k}(\mathbf{y})$ and $\eta^{k}\left(  \mathbf{y}\right)  $ in \eqref{eq: eta}
. Let%
\begin{align*}
\Gamma_{i}\left(  t,\mathbf{y}\right)  =\{\eta_{i}^{k}:\eta_{i}^{k}\leq t\},
\text{ and } \Gamma\left(  t,\mathbf{y}\right)  =\cup_{i=1}^{d}\Gamma
_{i}\left(  t,\mathbf{y}\right)  .
\end{align*}
For any time point $t\geq0$, define%
\[
\mathcal{C}\left(  t\right)  =\{1\leq i\leq d:Y_{i}\left(  t\right)
=0\}\text{ and }\mathcal{\bar{C}}\left(  t\right)  =\{1\leq j\leq
d:j\notin\mathcal{C}\left(  t\right)  \}
\]
We are ready to provide a bound for $\mathbf{1}^{T}(\mathbf{Y}(t;\mathbf{y}%
)-\mathbf{Y}(t;\mathbf{0}))$.

\begin{lemma}
\label{Lem:DM}
%Suppose $\tilde{\mathbf{Y}}(t)=\mathbf{Y}(t;\mathbf{y})$ and $%
%\mathbf{Y}(t)=\mathbf{Y}(t;\mathbf{0})$, then%
\[
0\leq\mathbf{1}^{T}(\mathbf{Y}(t;\mathbf{y})-\mathbf{Y}(t;\mathbf{0}%
))\leq\mathbf{1}^{T}\prod\limits_{s\in\Gamma\left(  t,\mathbf{y}\right)
}\Lambda^{T}\left(  \mathcal{\bar{C}}\left(  s\right)  \right)  \mathbf{y}.
\]

\end{lemma}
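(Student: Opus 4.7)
My plan is to treat the two inequalities separately. The lower bound will come from the standard monotonicity of the Skorokhod map under $M$-matrix reflection, while the upper bound will be established by induction over the hitting events in $\Gamma(t,\mathbf{y})$, exploiting the block structure of $R$ at each event time and the probabilistic interpretation of $\Lambda(S)$ given in Lemma~\ref{lem: Lambda}.

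\textbf{Lower bound.} Because $R=(I-Q)^{T}$ is an $M$-matrix and both processes share the driving signal $\mathbf{X}(\cdot)$, the Skorokhod reflection is isotone in its initial condition, a classical consequence of the $M$-condition that is also the starting point of the analysis in \cite{KellaRamasubramanian_2012}. Since $\mathbf{y}\geq \mathbf{0}$, this yields $\mathbf{Y}(t;\mathbf{y})\geq \mathbf{Y}(t;\mathbf{0})$ componentwise, so $\mathbf{1}^{T}(\mathbf{Y}(t;\mathbf{y})-\mathbf{Y}(t;\mathbf{0}))\geq 0$.

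\textbf{Upper bound.} Set $\mathbf{D}(t):=\mathbf{Y}(t;\mathbf{y})-\mathbf{Y}(t;\mathbf{0})$, so that the Skorokhod decomposition gives $\mathbf{D}(t)=\mathbf{y}+R(\mathbf{L}^{\mathbf{y}}(t)-\mathbf{L}^{\mathbf{0}}(t))$. Enumerate $\Gamma(t,\mathbf{y})$ chronologically as $s_{1}<\cdots <s_{n}$. I would prove by induction on $k$ the componentwise domination
\[
\mathbf{D}(s_{k})\leq \Lambda ^{T}(\bar{\mathcal{C}}(s_{k}))\cdots \Lambda ^{T}(\bar{\mathcal{C}}(s_{1}))\mathbf{y},
\]
with the empty product equal to $I$. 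Two ingredients drive the inductive step at $s_{k}$: (a) every coordinate $i\in \mathcal{C}(s_{k})$ has $Y_{i}(s_{k};\mathbf{y})=0$, and combined with $0\leq Y_{i}(s_{k};\mathbf{0})\leq Y_{i}(s_{k};\mathbf{y})$ from the lower bound this forces $D_{i}(s_{k})=0$, matching the zero rows of $\Lambda ^{T}(\bar{\mathcal{C}}(s_{k}))$; (b) writing the Skorokhod identity block-by-block on $\mathcal{C}(s_{k})$ and $\bar{\mathcal{C}}(s_{k})$, and using that $R_{\mathcal{C}\mathcal{C}}$ is itself an $M$-matrix with $R_{\mathcal{C}\mathcal{C}}^{-1}\geq 0$, one can eliminate the $\mathcal{C}(s_{k})$-local-time differences and read off precisely the block action of $\Lambda ^{T}(\bar{\mathcal{C}}(s_{k}))$ that is pinned down by Lemma~\ref{lem: Lambda}. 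Once $\mathbf{D}(s_{n})$ is dominated, one extends to $t$ by noting that $\mathbf{1}^{T}\mathbf{D}(\cdot)$ is non-increasing on the tail interval $(s_{n},t]$: the identity $(\mathbf{1}^{T}R)_{i}=Q_{i,0}\geq 0$, together with the fact that any uncounted zero-hit of $\mathbf{Y}(\cdot;\mathbf{y})$ only enlarges $\mathbf{L}^{\mathbf{0}}-\mathbf{L}^{\mathbf{y}}$ in the directions where the $Q_{i,0}$-weight is active, provides the required monotonicity. Taking $\mathbf{1}^{T}$ on both sides of the $k=n$ bound concludes the argument.

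\textbf{Main obstacle.} The delicate step is preserving the pointwise domination \emph{across} the passage $s_{k-1}\to s_{k}$, because on this interval coordinates in $\bar{\mathcal{C}}(s_{k-1})$ may visit zero without being counted in $\Gamma$ (due to the one-second buffer in the definition of $\eta _{i}^{k}$). Each such uncounted reflection of $\mathbf{Y}(\cdot;\mathbf{y})$ at node $j$ both decreases $D_{j}$ and redistributes non-negative mass through the off-diagonal entries of $-R$, and one must verify that these interim updates are dominated by the eventual projection at $s_{k}$. The Markov-chain representation from Lemma~\ref{lem: Lambda} is what keeps the bookkeeping tractable: the composition $\Lambda ^{T}(\bar{\mathcal{C}}(s_{k}))\cdots \Lambda ^{T}(\bar{\mathcal{C}}(s_{1}))$ inherits a probabilistic interpretation as a hitting-probability operator for a suitably absorbed chain on $\{0,1,\ldots,d\}$, whose substochastic norm is controlled uniformly in $d$ by Assumption A1 through (\ref{Ass_A1}); this is what ultimately yields the geometric contraction factor $(1-\beta_{0})^{\mathcal{N}(t;\mathbf{y})}$ in (\ref{Der_1}).
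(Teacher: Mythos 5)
Your lower bound and your tail argument on $(s_n,t]$ (monotonicity of $\mathbf{1}^{T}\mathbf{D}(\cdot)$ via $\mathbf{1}^{T}R\geq 0$ and the monotonicity of $\mathbf{L}^{\mathbf{0}}-\mathbf{L}^{\mathbf{y}}$) are fine and match what the paper gets from Theorem 1 of \cite{KellaRamasubramanian_2012}. The gap is in the inductive step: the componentwise domination
\begin{equation*}
\mathbf{D}(s_{k})\leq \Lambda ^{T}(\bar{\mathcal{C}}(s_{k}))\cdots \Lambda ^{T}(\bar{\mathcal{C}}(s_{1}))\,\mathbf{y}
\end{equation*}
that you propose to prove is too strong, and the obstacle you flag at the end is exactly where it breaks. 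Writing $\mathbf{D}(s_k)=\mathbf{z}-R\mathbf{w}$ with $\mathbf{z}=\mathbf{D}(s_{k-1})$ and $\mathbf{w}\geq 0$ the excess local time of the lower process over $(s_{k-1},s_k]$, eliminating $\mathbf{w}_{\mathcal{C}}$ via the zero coordinates leaves not just $\Lambda^{T}(\bar{\mathcal{C}}(s_k))\mathbf{z}$ but also a residual $H\mathbf{w}_{\bar{\mathcal{C}}}$ coming from local time accumulated at coordinates \emph{not} in $\mathcal{C}(s_k)$. The matrix $H=R_{\bar{\mathcal{C}}\mathcal{C}}R_{\mathcal{CC}}^{-1}R_{\mathcal{C}\bar{\mathcal{C}}}-R_{\bar{\mathcal{C}}\bar{\mathcal{C}}}$ has entries $P_{j}(\tau_i<\tau(\{0\}),\,W(n)\in\mathcal{C}\ \forall n\leq\tau_i-1)-\delta_{ij}$: its off-diagonal entries are nonnegative, so $H\mathbf{w}$ is \emph{not} entrywise $\leq 0$, and the pure product bound on $\mathbf{D}(s_k)$ does not follow. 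This is not a bookkeeping detail you can absorb into "the eventual projection at $s_k$"; it is the reason the paper's induction hypothesis is the weaker statement $\mathbf{D}(s_k)\leq \prod\Lambda^{T}\mathbf{y}+H\mathbf{w}$ with an explicit error term.

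Two further ingredients are then needed, neither of which appears in your outline: (a) to propagate the induction, one must show that the \emph{previous} step's error term is killed after one more projection, i.e.\ $\Lambda^{T}(\bar{\mathcal{C}})H^{\ast}\leq 0$ entrywise; the paper does this by identifying $(\Lambda^{T}(\bar{\mathcal{C}})H^{\ast})_{ij}$ as a difference of two probabilities of nested events (display \eqref{eq: H}); and (b) the surviving error term $H\mathbf{w}$ at the final event time is disposed of only after applying $\mathbf{1}^{T}$, using that the \emph{column sums} of $H$ are $\leq 0$ (the hitting events for distinct $i$ are disjoint). So the lemma as stated — an inequality for $\mathbf{1}^{T}\mathbf{D}(t)$ rather than for $\mathbf{D}(t)$ — is not an incidental weakening; it is forced by the structure of the reflection. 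To repair your argument you should weaken the induction hypothesis to include the $H\mathbf{w}$ term and supply the two sign arguments above.
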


\bigskip

The proofs of Lemmas \ref{lem: Lambda} and \ref{Lem:DM} can be found at the
end of this section. Given Lemma \ref{Lem:DM}, we can provide an exponentially
decaying upper bound in terms of $\mathcal{N}\left(  t;\mathbf{y}\right)  $.
The intuition is that the matrices $\Lambda\left(  \mathcal{\bar{C}}\left(
t\right)  \right)  $ are substochastic and thus one might hope to obtain an
exponentially decaying bound.

\begin{lemma}
\label{Lem_DB1}%
\begin{align}
\label{Bnd_DB1}\mathbf{1}^{T}(\mathbf{Y}(t;\mathbf{y})-\mathbf{Y}%
(t;\mathbf{0}))  &  \leq\left\Vert \mathbf{y}\right\Vert _{1}\cdot d\kappa_{0}
\left(  1-\beta_{0}\right)  ^{\mathcal{N}\left(  t;\mathbf{y}\right)
}.\nonumber
\end{align}

\end{lemma}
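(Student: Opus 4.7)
The starting point is Lemma~\ref{Lem:DM}, which reduces the task to bounding
\[
\mathbf{1}^{T}\Pi\,\mathbf{y},\qquad \Pi:=\prod_{s\in \Gamma(t,\mathbf{y})}\Lambda^{T}(\bar{\mathcal{C}}(s)),
\]
by $d\kappa_{0}(1-\beta_{0})^{\mathcal{N}(t;\mathbf{y})}\Vert \mathbf{y}\Vert _{1}$. Using the block form from Lemma~\ref{lem: Lambda} and the $M$-matrix property of $R$ (so $R_{\bar{S}\bar{S}}^{-1}\geq 0$ while $R_{S\bar{S}}\leq 0$, since $R=I-Q^{T}$ with $Q\geq 0$), each factor $\Lambda^{T}(\bar{\mathcal{C}}(s))$ is entrywise non-negative, hence $\Pi\geq 0$ entrywise; this lets me manipulate the bound through componentwise matrix inequalities throughout.

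I would group the ordered product by rounds, writing $\Pi=\Pi_{\mathrm{tail}}\,\Pi_{\mathcal{N}}\cdots \Pi_{1}$, where $\Pi_{k}$ collects the exactly $d$ factors with $s\in (\eta^{k-1},\eta^{k}]$ (one zero-hitting time per coordinate) and $\Pi_{\mathrm{tail}}$ collects the leftover factors with $s>\eta^{\mathcal{N}(t;\mathbf{y})}$. Since the row sums of $\Lambda(S)$ are hitting probabilities $P_{i}(\tau(S)<\tau(\{0\}))\leq 1$, we have $\Lambda(S)\mathbf{1}\leq \mathbf{1}$, equivalently $\mathbf{1}^{T}\Lambda^{T}(S)\leq \mathbf{1}^{T}$, so $\mathbf{1}^{T}\Pi_{\mathrm{tail}}\leq \mathbf{1}^{T}$ componentwise and the tail is harmless.

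The heart of the argument, and the main technical obstacle, is the per-round comparison
\[
\mathbf{1}^{T}\Pi_{\mathcal{N}}\cdots \Pi_{1}\leq \mathbf{1}^{T}(Q^{T})^{\mathcal{N}(t;\mathbf{y})}.
\]
Intuitively, in each round every coordinate appears in $\mathcal{C}(s)$ at least once (by the very definition of $\eta^{k}$), and I would argue this corresponds to at least one fresh transition of the auxiliary chain $W$: recall that the off-diagonal block $\Lambda_{\bar{S}S}(S)=(I-Q_{\bar{S}\bar{S}})^{-1}Q_{\bar{S}S}$ already encodes a single $Q$-step from $\bar{S}$ into $S$. I would prove the iterated bound by induction on $\mathcal{N}$, where the inductive step requires $v^{T}\Pi_{k+1}\leq v^{T}Q^{T}$ for the specific non-negative vector $v^{T}=\mathbf{1}^{T}(Q^{T})^{k}$ delivered by the hypothesis. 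The inequality is \emph{not} valid for an arbitrary $v\geq 0$ --- a direct $d=2$ check with $v=(0,1)^{T}$ fails --- so one must exploit the structure of the iterates $\mathbf{1}^{T}(Q^{T})^{k}$. The natural route is a trajectory-matching argument: using strong Markov for $W$ at the successive hitting times $\tau(\bar{\mathcal{C}}(s))$, one expands both sides as sums over sample paths of $W$ and verifies that the paths feeding into $v^{T}\Pi_{k+1}$ (survival through one round) collapse onto the shorter paths feeding into $v^{T}Q^{T}$ (survival through one $Q$-step).

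Granted the iterated comparison, the conclusion is immediate:
\[
\mathbf{1}^{T}\Pi\,\mathbf{y}\leq \mathbf{1}^{T}(Q^{T})^{\mathcal{N}(t;\mathbf{y})}\mathbf{y}=\mathbf{y}^{T}Q^{\mathcal{N}(t;\mathbf{y})}\mathbf{1}\leq \Vert \mathbf{y}\Vert _{1}\,\Vert Q^{\mathcal{N}(t;\mathbf{y})}\Vert _{\infty }\leq d\,\Vert \mathbf{y}\Vert _{1}\,\Vert Q^{\mathcal{N}(t;\mathbf{y})}\Vert _{1},
\]
where the last step uses the standard dimensional comparison $\Vert A\Vert _{\infty }\leq d\,\Vert A\Vert _{1}$ between induced operator norms (obtained via $\Vert x\Vert _{\infty }\leq \Vert x\Vert _{1}\leq d\Vert x\Vert _{\infty }$). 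Since $\Vert Q^{n}\Vert _{1}=\Vert \mathbf{1}^{T}Q^{n}\Vert _{\infty }\leq \kappa_{0}(1-\beta_{0})^{n}$ by Assumption~A1, this delivers the stated bound $d\kappa_{0}(1-\beta_{0})^{\mathcal{N}(t;\mathbf{y})}\Vert \mathbf{y}\Vert _{1}$. Note that the factor $d$ emerges cleanly only at the last step, through the $\ell_{\infty}$/$\ell_{1}$ operator-norm conversion, rather than from the per-round analysis itself.
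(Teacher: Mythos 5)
Your architecture is right and the closing norm computation is fine, but there is a genuine gap at exactly the point you yourself flag as ``the heart of the argument'': the comparison $\mathbf{1}^{T}\Pi_{\mathcal{N}}\cdots\Pi_{1}\leq\mathbf{1}^{T}(Q^{T})^{\mathcal{N}(t;\mathbf{y})}$ is asserted but never proved. You correctly note that the inductive step $v^{T}\Pi_{k+1}\leq v^{T}Q^{T}$ fails for general $v\geq 0$ and must exploit the special structure of $v^{T}=\mathbf{1}^{T}(Q^{T})^{k}$, but the ``trajectory-matching argument'' you invoke to rescue it is left entirely unexecuted, and it is precisely where all the content of the lemma lives. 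Concretely, two ingredients are missing. First, the identification (via the strong Markov property) of $\bigl(\Lambda(\bar{\mathcal{C}}(s_{1}))\cdots\Lambda(\bar{\mathcal{C}}(s_{N}))\mathbf{1}\bigr)_{i}$, with the times in chronological order, as the probability that the auxiliary chain $W$ started at $i$ visits the sets $\bar{\mathcal{C}}(s_{1}),\ldots,\bar{\mathcal{C}}(s_{N})$ in succession before absorption at $0$. Second, and crucially, the combinatorial fact that surviving one full round costs the chain at least one transition. This does \emph{not} follow merely from ``every coordinate appears in $\mathcal{C}(s)$ at least once per round''; the actual reason is that if the chain occupies state $l$ at the end of round $k$, then one of the round-$(k+1)$ targets is the set $\bar{\mathcal{C}}(\eta^{k+1}_{(j_{l})})$ associated with $l$'s \emph{own} zero-hitting time $\eta^{k+1}_{l}$, and $l\notin\bar{\mathcal{C}}(\eta^{k+1}_{(j_{l})})$ because $Y_{l}=0$ at that instant; hence the chain cannot already sit in that target and must take at least one step. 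This yields $\tau^{k+1}_{d}\geq\tau^{k}_{d}+1$, so survival through $\mathcal{N}$ rounds forces $\tau(\{0\})>\mathcal{N}$, which is the paper's route and which, once written out, does validate the inequality you assert. Without these two points the proposal reduces the lemma to an unproven claim of comparable difficulty to the lemma itself.

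Two secondary remarks. Your round-by-round induction also needs care with the ordering of the factors: the latest time's $\Lambda^{T}$ sits leftmost in $\prod_{s\in\Gamma(t,\mathbf{y})}\Lambda^{T}(\bar{\mathcal{C}}(s))$, so peeling off one round at a time requires a shift step applied to the later rounds rather than a direct application of the inductive hypothesis. And the partial round after $\eta^{\mathcal{N}(t;\mathbf{y})}$ is handled more cleanly as in the paper, by first passing from $t$ to $t_{1}=\eta^{\mathcal{N}(t;\mathbf{y})}$ using the monotonicity of $\mathbf{1}^{T}(\mathbf{Y}(\cdot;\mathbf{y})-\mathbf{Y}(\cdot;\mathbf{0}))$ from Theorem 1 of \cite{KellaRamasubramanian_2012}; your substochasticity bound $\mathbf{1}^{T}\Pi_{\mathrm{tail}}\leq\mathbf{1}^{T}$ for the leftover factors is also valid, but it is an extra moving part that the paper avoids.
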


\begin{proof}[Proof of Lemma \protect\ref{Lem_DB1}]
For any $k>0$, we write $\eta^k_{(1)}\leq \eta^k_{(2)}\leq ...\leq \eta^k_{(d)}$ as the sorting of $\{\eta^k_1,...,\eta^k_d\}$. Ties between $\eta^k_i$ and $\eta^k_j$ for $i \neq j$ are resolved arbitrarily, for example, lexicographically comparing $i$ and $j$. For the Markov chain $M(n)$, as we have defined at the beginning of this section, we define a sequence of stopping times $\tau_{j}^k$ as the following:
\begin{align*}
\tau_{1}^1&=\inf\{n\geq 0: M(n)\in \bar{\mathcal{C}}(\eta^1_{(1)})\},\\
\tau_{j+1}^k&=\inf\{n\geq \tau_{j}^k: M(n)\in \bar{\mathcal{C}}(\eta^k_{(j+1)})\}\text{ for all }j\leq d-1, \\
\tau_{1}^{k+1}&=\inf\{n\geq \tau_{d}^k: M(n)\in \bar{\mathcal{C}}(\eta^{k+1}_{(1)})\}.\end{align*}
Then, for any $m>0$ and $1\leq i\leq d$, one can check that
\begin{equation*}
\left(\prod_{k=1}^m\prod_{j=1}^d \Lambda(\bar{\mathcal{C}}(\eta^k_{(j)}))\mathbf{1}\right)_i=P_i(\tau_1^1\leq\tau_2^1\leq ....\leq \tau^m_d<\tau(\{0\})).\end{equation*}
We show that $\tau^m_d\geq m$ almost surely conditional on the event that $\tau_1^1\leq\tau_2^1\leq ....\leq \tau^m_d<\tau(\{0\})$. First,  we show  that $\tau^1_d\geq 1$. Suppose $\eta^1_{i}=\eta^1_{(j_1)}$,  then, since $i\notin \bar{\mathcal{C}}(\eta^1_{(j_1)})$ and $M(0)=i$, we must have $\tau^1_d\geq \tau^1_{j_1}\geq1$. For any $1\leq k\leq m$, let $l=M(\tau^k_d)$. Suppose $\eta^{k+1}_{l}=\eta^{k+1}_{(j_l)}$. Since $l\notin \bar{\mathcal{C}}(\eta^{k+1}_{(j_l)})$ and $M(\tau^k_d)=l$, we must have that $\tau^{k+1}_d\geq \tau^{k+1}_{j_l}\geq \tau^k_d+1$. Therefore, we can conclude by induction that $\tau^m_d\geq m$, and hence $\tau(\{0\})\geq m$ conditional on the event that $\tau_1^1\leq\tau_2^1\leq ....\leq \tau^m_d<\tau(\{0\})$. As a result, we have
\begin{equation*}
\left(\prod_{k=1}^m\prod_{j=1}^d \Lambda(\bar{\mathcal{C}}(\eta^k_{(j)}))\mathbf{1}\right)_i\leq P_i(\tau(\{0\})\geq m).%\leq \min \left( d\kappa _{0}\left( 1-\beta_{0}\right) ^{m},1\right).
\end{equation*}
As $Q_{i,j}=P\left(M\left( n+1\right) =j\mid M\left( n\right) =i\right)$ for $1\leq i, j\leq d$ and $0$ is the absorbing state,
\begin{equation*}
\max_{i}P_{i}\left( \tau (\{0\})>n\right) =\left\Vert Q^{n}\mathbf{1}%
\right\Vert _{\infty }.%\leq \min \left( d\kappa _{0}\right).
\end{equation*}%
Under Assumption A1),
\begin{equation*}
\left\Vert Q^{n}\mathbf{1}\right\Vert _{\infty }\mathbf{\leq 1}^{T}Q^{n}%
\mathbf{1}\leq d\left\Vert \mathbf{1}^{T}Q^{n}\right\Vert _{\infty }\leq
d\kappa _{0}\left( 1-\beta _{0}\right) ^{n}.\end{equation*}%
As a result, we have
$$\left\Vert\prod_{k=1}^m\prod_{j=1}^d\Lambda(\bar{\mathcal{C}}(\eta^k_{(j)}))\mathbf{1}\right\Vert_\infty
\leq d\kappa _{0}
\left( 1-\beta _{0}\right) ^{m}.$$
Let $t_1=\eta^{\mathcal{N}(t;\mathbf{y})}$ and recall from the definition of $\mathcal{N}(t;\mathbf{y})$ that $t_1\leq t$. Then, we have
\begin{align*}
\mathbf{1}^{T}(\mathbf{Y}(t;\mathbf{y})-\mathbf{Y}(t;\mathbf{0}))&\leq \mathbf{1}^{T}(\mathbf{Y}(t_1;\mathbf{y})-\mathbf{Y}(t_1;\mathbf{0}))\\
&\leq \mathbf{1}^T\prod_{k=1}^{\mathcal{N}\left( t;\mathbf{y}\right)}\prod_{j=1}^d \Lambda^T(\bar{\mathcal{C}}(\eta^k_{(j)}))\mathbf{y}\\
&\leq \left\Vert \prod_{k=1}^{\mathcal{N}\left( t;\mathbf{y}\right)}\prod_{j=1}^d \Lambda(\bar{\mathcal{C}}(\eta^k_{(j)}))\mathbf{1} \right\Vert_\infty \|\mathbf{y}\|_1\\
&\leq \|\mathbf{y}\|_1d\kappa _{0}
\left( 1-\beta _{0}\right) ^{\mathcal{N}\left( t;\mathbf{y}\right)}.
\end{align*}
Here, the first inequality follows part (iv) of Theorem 1 of \cite{KellaRamasubramanian_2012}, which states that for two RBMs that are only different in their initial values, their difference is non-increasing in time. The second inequality follows Lemma \ref{Lem:DM}.
\end{proof}

\begin{proof}[Proof of Lemma \ref{lem: Lambda}]
Following the definition of the matrix $\Lambda(S)$, it is obvious that, for all $j\in \bar{S}$,
$$\Lambda_{i,j}(S)=0\text{ as }P(M(\tau(S))=j)=0,$$
and for all $i, j \in S$,
$$\Lambda_{i,j}(S)=\delta_{i,j} \text{ as }\tau(S)=0\text{ and }M(\tau(S))=i.$$
Therefore, $\Lambda_{SS}=I$ and all elements of $\Lambda_{S\bar{S}}$ and $\Lambda_{\bar{S}\bar{S}}$ are 0.
By the property of Markov chains with transient states, we can compute that
\begin{align*}
\Lambda_{\bar{S}S}&=Q_{\bar{S}S}+Q_{\bar{S}\bar{S}}Q_{\bar{S}{S}}+Q_{\bar{S}\bar{S}}^2Q_{\bar{S}{S}}+.....\\
&=(I+Q_{\bar{S}\bar{S}}+Q_{\bar{S}\bar{S}}^2+...)Q_{\bar{S}{S}}=(I-Q_{\bar{S}\bar{S}})^{-1}Q_{\bar{S}{S}}
\end{align*}
Note that $R=(I-Q)^T$. As a result,  we have that $(I-Q_{\bar{S}\bar{S}})=R_{\bar{S}\bar{S}}^T$ and $Q_{\bar{S}S}=-R_{S\bar{S}}^T$, and therefore $\Lambda_{\bar{S}S}=-(R_{S\bar{S}}R^{-1}_{\bar{S}\bar{S}})^T$.
\end{proof}

\begin{proof}[Proof of Lemma \protect\ref{Lem:DM}]
For simplicity of notation, we write $\tilde{\mathbf{Y}}(t)=\mathbf{Y}(t;%
\mathbf{y})$ and $\mathbf{Y}(t)=\mathbf{Y}(t;\mathbf{0})$. Since $\Gamma (t,\mathbf{y})$ is
a finite set for all $t$, let $t_{1}$ be the maximum of set $\Gamma (t,\mathbf{y})$ and
denote $\mathcal{C}=\mathcal{C}(t_{1})$. If $\Gamma(t,\mathbf{y})$ is empty, we define $t_1=0$.%We use $\mathbf{Y}%
%_{C}(t_{1}) $ to denote the restriction of the vector $\mathbf{Y}\left(
%t_{1}\right) $ to the set $C$, that is $\mathbf{Y}_{C}\left( t_{1}\right) =(%
%\mathbf{Y}_{i}(t_{1}):i\in C)$, for other vectors arising in our prove and
%also we shall write $R_{CC}$ and $R_{C\bar{C}}$ for the matrix $R$
%restricted to elements in $C\times C$, and $C\times \bar{C}$, respectively.
We will prove the following statement:
\begin{equation}  \label{Eq:lm1}
\tilde{\mathbf{Y}}(t_{1})-\mathbf{Y}(t_{1})\leq \prod\limits_{s\in \Gamma
\left( t,\mathbf{y}\right) }\Lambda ^{T}\left( \mathcal{\bar{C}}\left(
s\right) \right) \mathbf{y}+H\mathbf{w,}
\end{equation}%
for some $\mathbf{w}\geq 0$ and $H$ is a matrix defined via
\begin{equation*}
H_{ij}=1(i\in \bar{\mathcal{C}},j\in \bar{\mathcal{C}})\cdot (P_{j}(\tau(\{i\})<\tau (\{0\})\text{
and }W(n)\in \mathcal{C}\text{ for all }n\leq \tau(\{i\})-1)-\delta _{ij}).
\end{equation*}%
Then, we can conclude
\begin{equation*}
\mathbf{1}^{T}(\tilde{\mathbf{Y}}(t)-\mathbf{Y}(t))\leq \mathbf{1}^{T}(%
\tilde{\mathbf{Y}}(t_{1})-\mathbf{Y}(t_{1}))\leq \mathbf{1}%
^{T}\prod\limits_{s\in \Gamma \left( t,\mathbf{y}\right) }\Lambda ^{T}\left(
\mathcal{\bar{\mathcal{C}}}\left( s\right) \right) \mathbf{y},
\end{equation*}%
where the first inequality holds following Part (iv) of Theorem 1 in \cite{KellaRamasubramanian_2012} and the  last  holds as $\mathbf{1}^{T}H\leq 0$.
Now, we shall prove \eqref{Eq:lm1} by induction on the cardinality of $\Gamma \left( t,\mathbf{y}%
\right) $. The base case is that $\Gamma(t,\mathbf{y})$ is empty. Then, for any $t$, as long as $\Gamma(t,\mathbf{y})$ is empty, $t_1=0$ and hence
$$\tilde{\mathbf{Y}}(t_{1})-\mathbf{Y}(t_{1})=\tilde{\mathbf{Y}}(0)-\mathbf{Y}(0)=\mathbf{y}$$
and \eqref{Eq:lm1} holds for $\mathbf{w}=0$.
Suppose \eqref{Eq:lm1} holds for all $t$ such that the cardinality of $\Gamma(t,\mathbf{y})\leq k$. Consider the case that $\Gamma(t,\mathbf{y})=k+1$. Let $t_{2}$ be the second largest element of the set $\Gamma (t,\mathbf{y%
})$.
% and write $D=\mathcal{C}(t_{2})$%.
Let $\mathbf{z}=\tilde{\mathbf{Y}}(t_{2})-%
\mathbf{Y}(t_{2})$ and $\mathbf{w}=(\mathbf{L}(t_{1})-\mathbf{L}(t_{2}))-(%
\tilde{\mathbf{L}}(t_{1})-\tilde{\mathbf{L}}(t_{2}))\geq 0$ (see Theorem 1
in \cite{KellaRamasubramanian_2012}). At time $t_{1}$, by definition, we
have
\begin{equation*}
\tilde{\mathbf{Y}}(t_{1})-\mathbf{Y}(t_{1})=\mathbf{z}-R\mathbf{w}.
\end{equation*}%
As $\tilde{\mathbf{Y}}_{\mathcal{C}}(t_{1})=\mathbf{Y}_{\mathcal{C}}(t_{1})=0$,
\begin{equation*}
0=\tilde{\mathbf{Y}}_{\mathcal{C}}(t_{1})-\mathbf{Y}_{\mathcal{C}}(t_{1})=\mathbf{z}_{\mathcal{C}}-R_{\mathcal{CC}}%
\mathbf{w}_{\mathcal{C}}-R_{\mathcal{C}\bar{\mathcal{C}}}\mathbf{w}_{\bar{\mathcal{C}}},
\end{equation*}%
from which we solve $\mathbf{w}_{\mathcal{C}}=R_{\mathcal{CC}}^{-1}(\mathbf{z}_{\mathcal{C}}-R_{\mathcal{C}\bar{\mathcal{C}}}%
\mathbf{w}_{\bar{\mathcal{C}}})$. Therefore,
\begin{align*}
\tilde{\mathbf{Y}}_{\bar{\mathcal{C}}}(t_{1})-\mathbf{Y}_{\bar{\mathcal{C}}}(t_{1})& =\mathbf{z}%
_{\bar{\mathcal{C}}}-R_{\bar{\mathcal{C}}\mathcal{C}}\mathbf{w}_{\mathcal{C}}-R_{\bar{\mathcal{C}}\bar{\mathcal{C}}}\mathbf{w}_{\bar{\mathcal{C}}}
\\
& =\mathbf{z}_{\bar{\mathcal{C}}}-R_{\bar{\mathcal{C}}\mathcal{C}}R_{\mathcal{CC}}^{-1}(\mathbf{z}_{\mathcal{C}}-R_{\mathcal{C}\bar{\mathcal{C}}}%
\mathbf{w}_{\bar{\mathcal{C}}})-R_{\bar{\mathcal{C}}\bar{\mathcal{C}}}\mathbf{w}_{\bar{\mathcal{C}}} \\
& =(I\mathbf{z}_{\bar{\mathcal{C}}}-R_{\bar{\mathcal{C}}\mathcal{C}}R_{\mathcal{CC}}^{-1}\mathbf{z}_{\mathcal{C}})+(R_{\bar{\mathcal{C}}%
\mathcal{C}}R_{\mathcal{CC}}^{-1}R_{\mathcal{C}\bar{\mathcal{C}}}-R_{\bar{\mathcal{C}}\bar{\mathcal{C}}})\mathbf{w}_{\bar{\mathcal{C}}} \\
& =\Lambda _{\bar{\mathcal{C}}}^{T}(\bar{\mathcal{C}})\mathbf{z}+(R_{\bar{\mathcal{C}}\mathcal{C}}R_{\mathcal{CC}}^{-1}R_{\mathcal{C}\bar{\mathcal{C}}%
}-R_{\bar{\mathcal{C}}\bar{\mathcal{C}}})\mathbf{w}_{\bar{\mathcal{C}}},
\end{align*}%
where the last equation holds following Lemma \ref{lem: Lambda}.
Note that
\begin{equation*}
R_{\bar{\mathcal{C}}\mathcal{C}}R_{\mathcal{CC}}^{-1}R_{\mathcal{C}\bar{\mathcal{C}}}-R_{\bar{\mathcal{C}}\bar{\mathcal{C}}}=Q_{\bar{\mathcal{C}}%
\mathcal{C}}^{T}(I-Q_{\mathcal{CC}}^{T})^{-1}Q_{\mathcal{C}\bar{\mathcal{C}}}^{T}+Q_{\bar{\mathcal{C}}\bar{\mathcal{C}}}^{T}-I_{\bar{\mathcal{C}}},
\end{equation*}%
where $Q$ is the transition matrix of $M$. Let $H_{\bar{\mathcal{C}}\bar{\mathcal{C}}}=R_{\bar{\mathcal{C}}%
\mathcal{C}}R_{\mathcal{CC}}^{-1}R_{\mathcal{C}\bar{\mathcal{C}}}-R_{\bar{\mathcal{C}}\bar{\mathcal{C}}}$. From the definition of $Q$,
we can check that
\begin{equation*}
H_{ij}=(P_{j}(\tau _{i}<\tau (\{0\})\text{ and }M(n)\in \mathcal{C}\text{ for all }%
n\leq \tau _{i}-1)-\delta _{ij}),
\end{equation*}%
for all $i,j\in \bar{\mathcal{C}}$ and $\tau _{i}:=\inf \{n\geq 1:M(n)=i\}$. Note that
$\Lambda _{\mathcal{C}}(\bar{\mathcal{C}})=0$ following Lemma \ref{lem: Lambda}, so we have
\begin{equation*}
\tilde{\mathbf{Y}}(t_{1})-\mathbf{Y}(t_{1})=\Lambda ^{T}(\mathcal{C})\mathbf{z}+H%
\mathbf{w}.
\end{equation*}%
%When the cardinality of $\Gamma(t,\mathbf{y})=1$, i.e., $t_2=0$, we have
%\begin{equation*}
%\tilde{\mathbf{Y}}(t_{1})-\mathbf{Y}(t_{1})=\Lambda ^{T}(\mathcal{C})\mathbf{y}+H%
%\mathbf{w}=\prod\limits_{s\in \Gamma \left( t,\mathbf{y}\right) }\Lambda
%^{T}\left( \mathcal{\bar{\mathcal{C}}}\left( s\right) \right) \mathbf{y}+H\mathbf{w}.
%\end{equation*}
%So the statement (\ref{Eq:lm1}) is true when the cardinality of $\Gamma(t,\mathbf{y})=1$.
Note that the cardinality of $\Gamma(t_2, \mathbf{y})= k$ and $t_2$ is its maximum, so by induction, we have
\begin{equation*}
\mathbf{z}\leq \prod\limits_{s\in \Gamma \left( t,\mathbf{y}\right)
\setminus \{t_{1}\}}\Lambda ^{T}\left( \mathcal{\bar{\mathcal{C}}}\left( s\right)
\right) \mathbf{y}+H^{\ast }\mathbf{w}^{\ast },
\end{equation*}%
where $\mathbf{w}^{\ast }\geq 0$ and
\begin{equation*}
H_{ij}^{\ast }=1(i\in \bar{D},j\in \bar{D})(P_{j}(\tau _{i}<\tau (\{0\})%
\text{ and }M(n)\in D\text{ for all }n\leq \tau _{i}-1)-\delta _{ij}),
\end{equation*}%
with $D=\mathcal{C}(t_2)$. As $\Lambda (\bar{\mathcal{C}})\geq 0$, so we have
\begin{equation*}
\tilde{\mathbf{Y}}(t_{1})-\mathbf{Y}(t_{1})\leq \prod\limits_{s\in \Gamma
\left( t,\mathbf{y}\right) }\Lambda ^{T}\left( \mathcal{\bar{\mathcal{C}}}\left(
s\right) \right) \mathbf{y}+\Lambda ^{T}(\bar{\mathcal{C}})H^{\ast }\mathbf{w}^{\ast }+H%
\mathbf{w}.
\end{equation*}%
As $\mathbf{w}^{\ast }\geq 0$, it suffices to show
that $(\Lambda ^{T}(\bar{\mathcal{C}})H^{\ast })_{ij}\leq 0$ for all $1\leq i, j\leq d$. Note that
$$(\Lambda ^{T}(\bar{\mathcal{C}})H^{\ast })_{ij}=\sum_{k}\Lambda ^{T}(\bar{\mathcal{C}})_{ik}H_{kj}^{\ast }.$$
Since $H_{kj}^{\ast }=0$ for all $j\in D$, we conclude that $(\Lambda ^{T}(\bar{\mathcal{C}})H^{\ast })_{ij}=0$ for all $j\in D$.
For $j\in \bar{D}$,
recall that $\Lambda ^{T}(\bar{\mathcal{C}})_{ij}=P_{j}(\tau \left( \bar{\mathcal{C}}\right) <\tau
\left( \{0\}\right) ,M\left( \tau (\bar{\mathcal{C}})\right) =i)$, therefore
\begin{align}\label{eq: H}
& ~(\Lambda ^{T}(\mathcal{\bar{C}})H^{\ast })_{ij}=\sum_{k}\Lambda ^{T}(\mathcal{\bar{C}})_{ik}H_{kj}^{\ast }
\nonumber\\
=& \sum_{k\in \bar{D}}P_{k}(\tau \left( \bar{\mathcal{C}}\right) <\tau \left(
\{0\}\right) ,M\left( \tau (\bar{\mathcal{C}})\right) =i)(P_{j}(\tau _{k}<\tau (\{0\})%
\text{ and }M(n)\in D\text{ for all }n\leq \tau _{k}-1)-\delta _{kj}) \nonumber\\
=& P_{j}(\tau(\bar{D})<\tau(\{0\}), \tilde{\tau}({\bar{\mathcal{C}}})<\tau(\{0\}), M(n)\in D\text{ for all }n< \tau ({\bar{D}}),M\left(
\tau (\bar{\mathcal{C}})\right) =i) \nonumber\\
&~~ -P_{j}(\tau(\bar{\mathcal{C}})<\tau(\{0\}),M\left( \tau (\bar{\mathcal{C}})\right) =i) \\
\leq & 0,\nonumber
\end{align}
where $\tilde{\tau}({\bar{\mathcal{C}}})\doteq \inf\{t\geq \tau(\bar{D}):M(t)\in\bar{\mathcal{C}}\}$ and the inequality holds as the first probability event is a subset of the latter one in \eqref{eq: H}.
\end{proof}

\section{Step 2: Coupling, Lyapunov Bounds, and Geometric
Trials\label{Subsect_tech_pfs}}

The main result in this section is the following.

\begin{proposition}
\label{prop:A2}Under A1) to A3), for any $\beta>0\ $satisfying (\ref{Geo_1}),
we have%
\[
E\left[  \left(  1-\beta\right)  ^{\mathcal{N}\left(  t;\mathbf{y}\right)
}\right]  \leq\exp\left(  \zeta_{0}\left\Vert \mathbf{y}\right\Vert _{\infty
}/(d^{3}\log\left(  d\right)  )+\beta/d^{2}\right)  \cdot\exp\left(
-\zeta_{1}t/(d^{4}\log\left(  d\right)  )\right)  \cdot\left(  1-\beta\right)
^{-1},
\]
where
\[
\zeta_{0}=\frac{\delta_{1}\cdot\beta^{2} }{2\max_{i=1}^{d}\sigma_{i}^{2}%
},\text{ \ }\zeta_{1}=\frac{\delta_{1}^{2}\cdot\beta^{2} }{16\max_{i=1}%
^{d}\sigma_{i}^{2}}.
\]

\end{proposition}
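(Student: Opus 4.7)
The plan is to execute the three substeps laid out in Step~2 of the strategy: control a hitting time $\tau^+$ of the identity-reflected upper-bound RBM, upgrade this to an i.i.d.\ ``coupon-collector'' domination of the successive $\eta^n(\mathbf{y})$, and then apply a Chernoff/supermartingale estimate. For Step~2.1, I would work with the upper-bound process $\mathbf{Y}^+$ of Kella (1996), whose identity reflection matrix makes each coordinate a one-dimensional RBM with drift bounded above by $-\delta_1$ under A1--A2 and with variances uniformly controlled by A3. The aim is to construct a Lyapunov function $h(\mathbf{y};\theta)\geq 0$ that behaves like $\theta\|\mathbf{y}\|_\infty$ for small $\theta>0$ and an exponent $\chi(\theta) = \Theta(\theta/d)$ such that $\exp(\chi(\theta)t + h(\mathbf{Y}^+(t);\theta))$ is a supermartingale; optional stopping at $\tau^+(\mathbf{y}) := \inf\{t \geq 0 : \mathbf{Y}^+(t;\mathbf{y}) \leq \mathbf{1}\}$ then gives
\[
E\bigl[\exp(\chi(\theta)\tau^+(\mathbf{y}))\bigr] \leq \exp(h(\mathbf{y};\theta)).
\]

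For Step~2.2, once $\mathbf{Y}^+ \leq \mathbf{1}$, A2 supplies a uniformly negative drift and A3 a uniform lower bound on variances, so each coordinate of $\mathbf{Y}$ that has not yet visited zero hits zero within a fixed time window with probability at least $\beta/d$; this is exactly what condition (\ref{Geo_1}) encodes as a Gaussian-tail estimate. Using stochastic domination, and restarting the Lyapunov control at the start of each round so that successive attempts are independent of the past, I would produce i.i.d.\ random variables $\xi_1,\xi_2,\dots$, independent of $\tau^+(\mathbf{y})$, distributed as coupon-collector times for $d$ coupons with per-attempt success probability $\beta/d$, such that
\[
\eta^n(\mathbf{y}) \leq \tau^+(\mathbf{y}) + \xi_1 + \cdots + \xi_n.
\]
The main obstacle lies here: the domination from Kella's lemma is only $R^{-1}\mathbf{Y}^+ \geq R^{-1}\mathbf{Y}$, not coordinatewise, so translating hitting information for $\mathbf{Y}^+$ into hitting information for $\mathbf{Y}$ while preserving independence of the $\xi_i$'s and a uniform $\beta/d$ success bound requires careful tracking of the state of $\mathbf{Y}^+$ at each restart.

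With the i.i.d.\ domination in place, set $T_n = \tau^+ + \xi_1 + \cdots + \xi_n$ and $N(t) = \sup\{n : T_n \leq t\}$, so that $\mathcal{N}(t;\mathbf{y}) \geq N(t)$ and $(1-\beta)^{\mathcal{N}(t;\mathbf{y})} \leq (1-\beta)^{N(t)}$. Choose $\chi(\theta)$ so that $(1-\beta)\,E[\exp(\chi(\theta)\xi_1)] \leq 1$; then $M_n := (1-\beta)^n \exp(\chi(\theta)T_n)$ is a nonnegative supermartingale with $M_0 = \exp(\chi(\theta)\tau^+)$. Since $T_{N(t)+1} > t$, a standard truncation-and-optional-stopping argument yields
\[
E\bigl[(1-\beta)^{N(t)}\bigr] \leq (1-\beta)^{-1}\exp(-\chi(\theta)t)\,E\bigl[\exp(\chi(\theta)\tau^+)\bigr] \leq (1-\beta)^{-1}\exp\bigl(h(\mathbf{y};\theta) - \chi(\theta)t\bigr).
\]
Optimizing $\theta$ to saturate the MGF constraint on $\xi_1$, whose tail is governed by the coupon-collector scale $(d/\beta)\log d$ multiplied by the $O(d)$ window length, forces $\chi(\theta) = \Theta(1/(d^4\log d))$ and $h(\mathbf{y};\theta) = O(\|\mathbf{y}\|_\infty/(d^3\log d))$. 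Tracking the A1--A3 constants through the Lyapunov and MGF computations then produces the explicit $\zeta_0$ and $\zeta_1$ in the statement, while the residual $\exp(\beta/d^2)$ absorbs the lower-order correction in the MGF expansion.
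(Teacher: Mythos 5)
Your proposal is correct and follows essentially the same route as the paper: a Lyapunov/supermartingale bound on the MGF of $\tau^+(\mathbf{y})$ with $\chi(\theta)=\Theta(\theta/d)$, a geometric-trials domination $\eta^n(\mathbf{y})\leq\tau^+(\mathbf{y})+\xi_1+\cdots+\xi_n$ with per-trial success probability tied to (\ref{Geo_1}), and a renewal-supermartingale estimate followed by tuning $\theta$ and $\varepsilon$ to get $\chi(\theta)=\Theta(\beta^2/(d^4\log d))$. The obstacle you flag about the non-coordinatewise domination is resolved in the paper exactly as you anticipate: since $R^{-1}\geq I$ and $\mathbf{Y}\geq 0$, one has $\mathbf{Y}(\tau^+)\leq R^{-1}\mathbf{Y}^+(\tau^+)\leq R^{-1}\mathbf{1}\leq b_1\mathbf{1}$ coordinatewise, and the restart states are dominated via the running maximum of the driving Brownian motion (the variable $\Theta_d$ in Lemma \ref{Lemma_Stoch_Dom}), which preserves the i.i.d.\ structure of the $\xi_i$'s.
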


The proof of Proposition \ref{prop:A2} follows Steps 2.1, 2.2 and 2.3 as
described in the main strategy. The proofs of all the technical lemmas can be
found in Section \ref{Sec:proofs}.

We first explain how to construct the upper bound process $\mathbf{Y}%
^{+}(\cdot;\mathbf{y})$ briefly mentioned in the discussion of Step 2.
Following Assumptions A1) and A3), $\left\Vert R^{-1}\mathbf{1}\right\Vert
_{\infty}\leq\kappa_{0}/\beta_{0}$, and $R^{-1}\boldsymbol{\mu}\leq-\delta
_{0}\mathbf{1}$. We choose
\[
\boldsymbol{\mu}^{+}=\boldsymbol{\mu}+\delta_{1}\mathbf{1,}%
\]
where $\delta_{1}=\delta_{0}\beta_{0}/(2\kappa_{0})$. One can check that
$\boldsymbol{\mu}^{+}>\boldsymbol{\mu}$ and $R^{-1}\boldsymbol{\mu}^{+}%
\leq-(\delta_{0}/2)\mathbf{1}$.

Let ($\mathbf{Y}^{+}(\cdot)$, $\mathbf{L}^{+}(\cdot)$) be the solution to the
Skorokhod problem with orthogonal reflection as follows,%
\[
\mathbf{Y}^{+}\left(  t\right)  =\mathbf{Y}^{+}(0)+\mathbf{\bar{X}}\left(
t\right)  +\mathbf{L}^{+}\left(  t\right)  ,
\]
with $\mathbf{\bar{X}}\left(  t\right)  =\mathbf{X}\left(  t\right)
-\boldsymbol{\mu}^{+}t$ and $\mathbf{Y}^{+}\left(  0\right)  =\mathbf{y}$. We
write $\mathbf{Y}^{+}(t)$ as $\mathbf{Y}^{+}(t;\mathbf{y})$, as its value
depends on the initial value $\mathbf{y}$.
%Observe that $E\mathbf{\bar{X}}\left( t\right) =t\left( \boldsymbol{\mu }-%\boldsymbol{\mu }^{+}\right) <0$.
We know from Lemma 3.1 in \cite{KellaWhitt_1996} that
\begin{equation}
R^{-1}\mathbf{Y}\left(  t;\mathbf{y}\right)  \leq R^{-1}\mathbf{Y}^{+}\left(
t;\mathbf{y}\right)  . \label{SD_KW_96}%
\end{equation}

As discussed in Step 2.1, we have defined $\tau^{+}\left(  \mathbf{y}\right)
=\inf\{t\geq0:\mathbf{Y}^{+}\left(  t;\mathbf{y}\right)  \leq\mathbf{1}\}$,
which is the time to visit a compact set for $\mathbf{Y}^{+}$, and for
$\mathbf{Y}$ as well, according to \eqref{SD_KW_96}:
\[
\mathbf{Y}\left(  \tau^{+}\left(  \mathbf{y}\right)  ;\mathbf{y} \right)  \leq
R^{-1}\mathbf{Y}\left(  \tau^{+}\left(  \mathbf{y}\right)  ;\mathbf{y}
\right)  \leq R^{-1}\mathbf{Y}^{+}\left(  \tau^{+}\left(  \mathbf{y}\right)
;\mathbf{y} \right)  \leq R^{-1}\mathbf{1}\leq\frac{\kappa_{0}}{\beta_{0}%
}\mathbf{1=}b_{1}\mathbf{1},
\]
where the first inequality holds as $R^{-1}\geq I$ and $\mathbf{Y}\geq0$.
(Note that $R^{-1}=(I-Q)^{-1}=I+Q+Q^{2}+...\geq I$ as $Q\geq0$. ) The
following result provides a bound for the moment-generating function of
$\tau^{+}\left(  \mathbf{y}\right)  $.

\begin{lemma}
\label{Lem_LB_CT}Define $g:[0,\infty)\to\mathbb{R}_{+}$ as
\[
g\left(  y\right)  =2^{-1}y^{2}I\left(  0\leq y\leq1\right)  +(y-1/2)I\left(
y>1\right)  .
\]
%
%and put $h_{i}\left( \theta \mathbf{y}\right) =g\left( \theta e_{i}^{T}%
%\mathbf{y}\right) =g\left( \theta y_{i}\right) $ for some $\theta $ to be
%chosen momentarily.
For any given $\varepsilon>0$ and $\theta>0$, define
\[
h\left(  \mathbf{y;}\theta,\varepsilon\right)  =\varepsilon\log\left(
\sum_{i=1}^{d}\exp\left(  g(\theta y_{i}) /\varepsilon\right)  \right)
\leq\max_{i=1}^{d}g( \theta y_{i}) +\varepsilon\log\left(  d\right)
\leq\theta\left\Vert \mathbf{y}\right\Vert _{\infty}+\varepsilon\log\left(
d\right)  .
\]
Then, for any
\[
0<\theta\leq\frac{\varepsilon}{2\varepsilon+1}\cdot\frac{\delta_{1}}%
{(1+d)\max_{i=1}^{d}\sigma_{i}^{2}}\leq\frac{\varepsilon}{2\varepsilon+1}%
\cdot\frac{\delta_{0}\beta_{0}/(2\kappa_{0})}{\left(  1+d\right)  b_{0}},
\]
and
\begin{equation}
\chi\left(  \theta\right)  \doteq\theta\frac{\delta_{1}}{2(1+d)}\leq
\theta\frac{\delta_{0}\beta_{0}/(2\kappa_{0})}{2(1+d)}, \label{Select_chi}%
\end{equation}
we have%
\begin{equation}
E\left[  \exp\left(  h\left(  \mathbf{Y}^{+}\left(  \tau^{+}\left(
\mathbf{y}\right)  \right)  \mathbf{;}\theta,\varepsilon\right)  +\chi\left(
\theta\right)  \tau^{+}\left(  \mathbf{y}\right)  \right)  \right]  \leq
\exp\left(  h\left(  \mathbf{y;}\theta,\varepsilon\right)  \right)  \leq
\exp\left(  \theta\left\Vert \mathbf{y}\right\Vert _{\infty}+\varepsilon
\log\left(  d\right)  \right)  . \label{in_0_Mg}%
\end{equation}

\end{lemma}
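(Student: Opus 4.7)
The plan is to prove the moment generating function bound as a direct exponential Foster-Lyapunov inequality for the dominating process $\mathbf{Y}^+$: show that
\[
V(t) := \exp\bigl(h(\mathbf{Y}^+(t);\theta) + \chi(\theta)\,t\bigr)
\]
is a nonnegative supermartingale on $[0,\tau^+(\mathbf{y})]$, then read the lemma off from optional stopping.

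First, I would apply It\^o's formula to $V(t)$. Using $dY_i^+ = -\delta_1\,dt + (C\,dB)_i + dL_i^+$ together with the log-sum-exp derivatives $\partial_{y_i}h = p_i v_i$ and $\partial^2_{y_iy_j}h = -p_i p_j v_i v_j/\varepsilon + \delta_{ij}\bigl(p_i v_i^2/\varepsilon + p_i \theta^2 g''(\theta y_i)\bigr)$, where $v_i := \theta g'(\theta y_i)$ and $p_i$ is the soft-max weight, the drift of $V$ divided by $V$ comes out to
\[
\chi - \delta_1 \sum_i p_i v_i + \tfrac12 S + (T-U)/(2\varepsilon) + U/2,
\]
with $S:=\sum_i p_i\theta^2 g''(\theta y_i)\Sigma_{ii}$, $T:=\sum_i p_i v_i^2\Sigma_{ii}$, and $U:=\nabla h^{T}\Sigma\nabla h$. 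The boundary (local-time) piece $\sum_i p_i v_i\,dL_i^+$ vanishes identically because $g'(0)=0$ forces $v_i=0$ wherever $L_i^+$ grows; this is precisely the reason the quadratic part was built into $g$.

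Second, I would verify the drift inequality on $\{\mathbf{y}\not\le\mathbf{1}\}$. Positive semidefiniteness of $\Sigma$ gives $T\ge U$ (since $T-U = \mathrm{tr}(\mathrm{Cov}_p(\xi)\Sigma)$ for an appropriate single-coordinate soft-max random vector $\xi$), and under $\varepsilon\le 1$ the negative $U$-contribution yields $(T-U)/(2\varepsilon) + U/2 \le T/(2\varepsilon)$, allowing the cross term to be discarded. With $|v_i|\le\theta$, $g''\le 1$, and $\Sigma_{ii}\le\max_i\sigma_i^2$, the second-order piece is bounded above by $\theta^2\max_i\sigma_i^2\,(1+\varepsilon)/(2\varepsilon)$. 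It remains to show that $-\delta_1\sum_i p_i v_i$ dominates $\chi$ plus this bound: for $\mathbf{y}\not\le\mathbf{1}$ some argmax coordinate $i^*$ has $y_{i^*}>1$, the soft-max concentration gives $p_{i^*}\ge 1/d$, and the piecewise structure of $g'$ (linear on the quadratic regime, constant on the linear regime) combined with the prescribed $\theta \le \frac{\varepsilon}{2\varepsilon+1}\cdot\frac{\delta_1}{(1+d)\max_i\sigma_i^2}$ and $\chi(\theta) = \frac{\theta\delta_1}{2(1+d)}$ is calibrated exactly so the inequality closes.

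Finally, optional stopping applied to the bounded stopped supermartingale $V(t\wedge\tau^+)$ and passage to $t\to\infty$ (Assumption~A2 gives $\tau^+<\infty$ a.s.; Fatou/monotone convergence) yields
\[
E\bigl[\exp\bigl(h(\mathbf{Y}^+(\tau^+(\mathbf{y}));\theta) + \chi(\theta)\tau^+(\mathbf{y})\bigr)\bigr] \le \exp(h(\mathbf{y};\theta)),
\]
and the chain $h(\mathbf{y};\theta)\le \max_i g(\theta y_i) + \varepsilon\log d \le \theta\|\mathbf{y}\|_\infty + \varepsilon\log d$ is immediate from log-sum-exp. The principal obstacle is the drift step: lower-bounding $\sum_i p_i v_i$ outside the compact set is delicate when the soft-max weight concentrates on a coordinate only marginally larger than $1$, where the quadratic regime of $g$ gives $v_{i^*}$ of order $\theta^2$; the interplay between the two regimes of $g'$ must combine with the $(1+d)$ in the denominator of $\chi(\theta)$ and the $\varepsilon/(2\varepsilon+1)$ prefactor in the constraint on $\theta$ so that the positive Hessian contribution is absorbed and the supermartingale property holds uniformly.
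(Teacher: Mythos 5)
Your overall route is the same as the paper's. The paper first establishes Lemma \ref{Lemma_Aux_Mom}, an exponential supermartingale/optional-stopping statement built from the stochastic exponential of $\int Dh(\mathbf{Y}^{+}(s))\,Cd\mathbf{B}(s)$, and then verifies its two hypotheses --- the interior drift inequality \eqref{Sub_Sol} and the boundary condition \eqref{Ref_Cond} --- for the soft-max function $h(\cdot\,;\theta)$. Your It\^o computation, the observation that the local-time term vanishes because $g'(0)=0$, the discarding of the negative cross term of the Hessian via positive semidefiniteness of $\Sigma$, and the final optional-stopping/Fatou step all coincide with the paper's argument.

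The gap is the drift verification on $\{\left\Vert \mathbf{y}\right\Vert_{\infty}\geq 1\}$, which you yourself flag as ``the principal obstacle'' and then do not carry out: asserting that the constants are ``calibrated exactly so the inequality closes'' is the entire content of the lemma. Concretely, to reach $\chi(\theta)=\theta\delta_{1}/(2(1+d))$ one needs $\sum_{i}w_{i}\,g'(\theta y_{i})\gtrsim 1/(1+d)$ outside the compact set, since the first-order term is $-\theta\delta_{1}\sum_{i}w_{i}g'(\theta y_{i})$ and the second-order terms are of size $\theta^{2}(2\varepsilon+1)\max_{i}\sigma_{i}^{2}/(2\varepsilon)$. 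Your own diagnosis shows why your version does not close: if the only coordinate exceeding $1$ satisfies $1\leq y_{i^{*}}\ll\theta^{-1}$, then $g'(\theta y_{i^{*}})=\theta y_{i^{*}}=O(\theta)$, so the drift gain is only $O(\delta_{1}\theta^{2}/(1+d))$, short of $\chi(\theta)$ by a factor of $\theta$ (which here is $O(d^{-3}/\log d)$). The paper closes this step by the chain $-\sum_{i}w_{i}g'(\theta y_{i})\delta_{1}\leq-\sum_{i}w_{i}\delta_{1}I(y_{i}\geq 1)\leq-\delta_{1}/(1+d)$, i.e.\ by evaluating $g'$ at the value $1$ of its linear regime on every coordinate with $y_{i}\geq 1$, combined with the soft-max lower bound $\sum_{i}w_{i}I(y_{i}\geq 1)\geq 1/(1+d)$. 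To complete your proof you must either justify that replacement (for instance by enlarging the escape set so that coordinates triggering the drift condition genuinely lie in the linear regime of $g(\theta\,\cdot)$, with the corresponding adjustment of $\tau^{+}$), or settle for a smaller $\chi(\theta)$; as written, \eqref{in_0_Mg} with the $\chi(\theta)$ of \eqref{Select_chi} is not established.
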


\bigskip

%Using the previous bound we proceed to estimate the moment generating
%function of $\eta ^{1}\left( \mathbf{y}\right) $. The idea is to wait $\tau
%^{+}\left( \mathbf{y}\right) $ units of time. At this point, due to the
%bound $R^{-1}=I+Q^{T}+\left( Q^{2}\right) ^{T}...\geq I$, the domation
%result (\ref{SD_KW_96}), and Assumption A1), we have that
%\begin{equation*}
%\mathbf{Y}\left( \tau ^{+}\left( \mathbf{y}\right) \right) \leq R^{-1}%
%\mathbf{Y}\left( \tau ^{+}\left( \mathbf{y}\right) \right) \leq R^{-1}%
%\mathbf{Y}^{+}\left( \tau ^{+}\left( \mathbf{y}\right) \right) \leq R^{-1}%
%\mathbf{1}\leq \frac{\kappa _{0}}{\beta _{0}}\mathbf{1=}b_{1}\mathbf{1}.
%\end{equation*}

%It is obvious that
%$$\eta^1(\mathbf{y})\leq \sum_{i=1}^d\eta^1_i(\mathbf{y}).$$
Starting from position $\mathbf{Y}\left(  \tau^{+}\left(  \mathbf{y}\right)
\right)  $, we wait for another unit of time till $\tau^{+}(\mathbf{y})+1$. If
the event $\{Y_{i}(t)=0\text{ for some }\tau^{+}(\mathbf{y})< t\leq\tau
^{+}(\mathbf{y})+1\}$ occurs, then we can conclude that $\eta^{1}_{i}\leq
\tau^{+}(\mathbf{y})+1$. The following lemma shows that, for all $1\leq i\leq
d$, the probability for such an event to happen is uniformly bounded away from
0, regardless of the position of the process at time $\tau^{+}(\mathbf{y})$.

\begin{lemma}
\label{Lem:Coupon} There exists a constant $p_{0}>0$, independent of $d$, such
that for all $\mathbf{y}\leq b_{1}\mathbf{1}$ and every $i\in\{1,...,d\} $,%
\[
P(Y_{i}(t)=0\text{ for some }t\leq1|\mathbf{Y}(0)=\mathbf{y})\geq p_{0}.
\]

\end{lemma}

Based on Lemma \ref{Lem:Coupon}, we are ready to perform a \textquotedblleft
geometric trial argument\textquotedblright(Step 2.2) to obtain a bound for
each $\eta^{1}_{i}$ with $1\leq i\leq d$. Each round of the trials includes
two steps described as follows. Suppose at the beginning of the $k$-th round
of trial, the initial position of the process $\mathbf{Y}$ is $\mathbf{Y}%
^{i,k}$ (in particular, $\mathbf{Y}^{i,1}=\mathbf{y}$). In the first step, it
takes $\tau^{+}(k;\mathbf{Y}^{i,k})$ for $\mathbf{Y}(\cdot; \mathbf{Y}^{i,k})$
to arrive to the compact set $\{\mathbf{y}\in\mathbb{R}^{d}: |y_{i}|\leq b_{1}
\text{ for }1\leq i\leq d\}$. (For given $\mathbf{y}$, $\tau^{+}%
(k;\mathbf{y})$'s are i.i.d. copies of $\tau^{+}(\mathbf{y})$.) Then, in the
next one unit of time, we check if the event $\{Y_{i}(t;\mathbf{Y}%
^{i,k})=0\text{ for some }\tau^{+}(k;\mathbf{Y}^{i,k})< t\leq\tau
^{+}(k;\mathbf{Y}^{i,k})+1\}$ happens. If so, we can stop as the process has
already hit 0. If not, we then start the next round of trial with the initial
position $\mathbf{Y}^{i,k+1}=\mathbf{Y}(\tau^{+}(k;\mathbf{Y}^{i,k}%
)+1;\mathbf{Y}^{i,k})$. In summary, we can define a sequence of Bernoulli
random variables $\zeta_{k}(i)$ jointly with the sequence $\{\mathbf{Y}%
^{i,k}\}$ as
\[
\zeta_{k}(i)\doteq1(Y_{i}(t;\mathbf{Y}^{i,k})=0\text{ for some }\tau
^{+}(k;\mathbf{Y}^{i,k})< t\leq\tau^{+}(k;\mathbf{Y}^{i,k})+1).
\]
Let $K=\min\{k: \zeta_{k}(i)=1\}$, and we obtain a bound for $\eta^{1}%
_{i}(\mathbf{y})$:
\[
\eta^{1}_{i}(\mathbf{y})\leq\sum_{k=1}^{K} (\tau^{+}(k;\mathbf{Y}^{i,k})+1).
\]
The next lemma shows that we can replace $K$ with a Geometric random variable
(r.v.) $G^{i}$, and the sequence $\tau^{+}(\mathbf{Y}^{i,k})$ with an i.i.d.
sequence of positive r.v.'s that are independent of $G$ and have bounded
moment-generating function.

\begin{lemma}
\label{Lemma_Stoch_Dom} Let $p$ be any positive number such that $p<p_{0}$.
%as defined in Lemma \ref{Lem:Coupon}.
Let $\{G^{i}:1\leq i\leq d\}$ be i.i.d. copies of a Geometric random variable
$G$ with probability of success equal to $p$. Then, we can construct a random
variable $\Theta_{d}>0$ and its i.i.d. copies $\{\Theta_{d}^{i,k}\}$ such
that
\[
\eta^{1}_{i}(\mathbf{y})\leq\tau^{+}(\mathbf{y})+\sum_{k=1}^{G^{i}}\left(
1+\tau^{+}(\Theta_{d}^{i,k}\mathbf{1})\right)  .
\]
Therefore,
\[
\eta^{1}(\mathbf{y})\leq\tau^{+}(\mathbf{y})+\sum_{i=1}^{d}\sum_{k=1}^{G^{i}}
\left(  1+\tau^{+}(\Theta_{d}^{i,k}\mathbf{1})\right)  .
\]
Moreover, let $\phi_{d}\left(  \theta\right)  =E[\exp\left(  \theta\Theta
_{d}\right)  ]$, then, for $\theta=o( 1) $ as $d\to\infty$,
\begin{equation}
\phi_{d}\left(  \theta\right)  \leq1+2(1-p)^{-1}\theta\log\left(  1+d\right)
\exp\left(  \theta\log\left(  1+d\right)  ^{2/3}\right)  +\theta O\left(
\exp\left(  -\log\left(  1+d\right)  ^{4/3}/3b_{0}\right)  \right)  .
\label{Bnd_fhi_d}%
\end{equation}

\end{lemma}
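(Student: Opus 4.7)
The plan is to make precise the geometric-trial scheme outlined informally before the lemma, and then to control $\phi_d$ by splitting at a level tuned to the Gaussian tail of the one-step increments of the upper-bound process $\mathbf{Y}^+$.

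First I would establish the stochastic domination of the trial count. Fix a coordinate $i$ and condition on the $\sigma$-field generated by the Brownian motion up to $\tau^+(k;\mathbf{Y}^{i,k})$. By \eqref{SD_KW_96} and the preceding display, the position $\mathbf{Y}^{i,k}$ at that moment satisfies $\mathbf{Y}^{i,k}\le b_1\mathbf{1}$, so by the strong Markov property Lemma \ref{Lem:Coupon} gives that the conditional probability of $\zeta_k(i)=1$ is at least $p_0>p$. A standard Bernoulli coupling (attach an independent $\mathrm{Unif}[0,1]$ variable to each trial and compare to the thresholds $p_0$ and $p$) then produces, for each $i$, a Geometric($p$) random variable $G^i$ satisfying $K\le G^i$ almost surely; using independent auxiliary uniforms across $i$ makes the $G^i$ i.i.d.

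Next I would construct the $\Theta_d^{i,k}$. At the end of the $k$-th failed round the position is $\mathbf{Y}^{i,k+1}=\mathbf{Y}(\tau^+(k;\mathbf{Y}^{i,k})+1)$. Because $\mathbf{Y}^+$ is orthogonally reflected, each of its coordinates over the window $[\tau^+(k;\mathbf{Y}^{i,k}),\tau^+(k;\mathbf{Y}^{i,k})+1]$ is dominated by its starting value plus a one-dimensional free-process excursion driven by the increment of $C\mathbf{B}$ on that interval. Combined with the compact-set inclusion $\mathbf{Y}^+(\tau^+)\le\mathbf{1}$, \eqref{SD_KW_96}, and $\|R^{-1}\mathbf{1}\|_\infty\le b_1$, this yields $\mathbf{Y}^{i,k+1}\le \Theta_d^{i,k}\mathbf{1}$ coordinate-wise for a random variable of the schematic form $\Theta_d^{i,k}=b_1\bigl(1+\|\boldsymbol{\mu}^+\|_\infty+\max_{1\le j\le d}\sup_{0\le s\le 1}|(C(\mathbf{B}(\tau^++s)-\mathbf{B}(\tau^+)))_j|\bigr)$. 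By the strong Markov property the $\Theta_d^{i,k}$ are i.i.d.\ across $k$; a further stochastic-domination coupling makes them i.i.d.\ across $(i,k)$ while keeping them independent of the auxiliary uniforms generating $G^i$. Monotonicity of $\tau^+$ in its starting point then upgrades $\tau^+(k+1;\mathbf{Y}^{i,k+1})$ to $\tau^+(\Theta_d^{i,k}\mathbf{1})$, and iterating gives the stated bound on $\eta^1_i(\mathbf{y})$; summing over $i$ yields the bound on $\eta^1(\mathbf{y})$.

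The main technical obstacle is the MGF estimate \eqref{Bnd_fhi_d}. Assumption A3) together with the one-dimensional reflection principle gives a tail of the form $P(\Theta_d>x)\le 2d\exp(-(x-c_0)^2/(2b_0))$ for $x\ge c_0$, with $c_0$ a universal constant. Writing $\phi_d(\theta)=1+\theta\int_0^\infty e^{\theta x}P(\Theta_d>x)\,dx$, I would split the integral at $L=\log(1+d)^{2/3}$. On $[0,L]$ the crude bound $P(\Theta_d>x)\le 1$ produces a contribution of order $\theta L\exp(\theta L)$, matching the middle term of \eqref{Bnd_fhi_d} after absorbing a factor $(1-p)^{-1}$ to simplify downstream bookkeeping; on $[L,\infty)$ the Gaussian tail is dominant, and elementary manipulations yield $\theta\, O(\exp(-\log(1+d)^{4/3}/(3b_0)))$ uniformly for $\theta=o(1)$. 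The calculation is conceptually straightforward, but matching the exact constants — and in particular verifying that $L=\log(1+d)^{2/3}$ is the balancing scale between the polynomial-in-$\log d$ and the doubly-logarithmic contributions — is the point requiring genuine care.
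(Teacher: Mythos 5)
Your overall architecture is the paper's: geometric trials with success probability bounded below via Lemma \ref{Lem:Coupon}, domination of the post-round position by $\mathbf{Y}(1;b_1\mathbf{1})$ and hence by $(b_1+b_1\left\Vert \mathbf{U}\right\Vert_\infty)\mathbf{1}$ with $\mathbf{U}$ built from the running infimum of $C\mathbf{B}$, and an MGF estimate obtained by splitting the tail integral at $\log(1+d)^{2/3}$ and applying the reflection principle plus a union bound over the $d$ coordinates. However, there is a genuine gap at the decoupling step. You assert that "a further stochastic-domination coupling makes \(\Theta_d^{i,k}\) i.i.d.\ across \((i,k)\) while keeping them independent of the auxiliary uniforms generating \(G^i\)," but the obstruction is not the auxiliary uniforms: the duration of round $k+1$ depends on the position $\mathbf{Y}^{i,k+1}$ reached at the end of round $k$, which is driven by the \emph{same} Brownian increments that determine whether round $k$ succeeded. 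Since round $k+1$ appears in the sum $\sum_{k=1}^{G^i}$ precisely when round $k$ failed, the summands are effectively sampled from the law of the round duration \emph{conditioned on failure}, and they are not independent of $G^i$ under the naive coupling. This independence is exactly what Lemma \ref{Lem_LB_DT} needs in order to factorize $E\exp(\chi(\theta)\xi)$ into a geometric sum, so the gap is not cosmetic.

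The paper resolves this by first replacing each $\tau_k(i)$ in the sum by $\tilde{\tau}_k(i)$, distributed as $\tau_k(i)$ conditional on $\psi_k(i)=0$ and independent of $G^i$, and then dominating $\tilde{\tau}_k(i)$ via the elementary bound $P(\tau_k(i)>t\mid\psi_k(i)=0)\leq P(\tau_k(i)>t)/(1-p)$. The factor $(1-p)^{-1}$ is then built directly into the definition of $\Theta_d$ through its tail, $P(\Theta_d>t)=\min\bigl(1,\;P(b_1+b_1\left\Vert\mathbf{U}\right\Vert_\infty>t)/(1-p)\bigr)$, which is why the middle term of \eqref{Bnd_fhi_d} carries the coefficient $2(1-p)^{-1}$. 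Your remark that this factor is merely "absorbed to simplify downstream bookkeeping" indicates the mechanism is missing from your argument; without it the claimed i.i.d.\ structure and independence from $G^i$ do not hold. A secondary point: your schematic $\Theta_d^{i,k}$ includes $\left\Vert\boldsymbol{\mu}^+\right\Vert_\infty$, which is not bounded uniformly in $d$ by the assumptions; the paper avoids this because the driving signal of $\mathbf{Y}^+$ is $\bar{\mathbf{X}}(t)=-\delta_1 t\mathbf{1}+C\mathbf{B}(t)\leq C\mathbf{B}(t)$, so the (negative) drift can simply be discarded and only the Brownian part enters $\mathbf{U}$.
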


Define a random variable
\[
\xi=\sum_{i=1}^{d}\sum_{k=1}^{G^{i}}\left(  1+\tau^{+}(\Theta_{d}%
^{i,k}\mathbf{1})\right)  .
\]
According to Lemma \ref{Lemma_Stoch_Dom}, we can couple $\eta^{1}\left(
\mathbf{y}\right)  $ and $\xi$ so that%
\[
\eta^{1}\left(  \mathbf{y}\right)  \leq\tau^{+}\left(  \mathbf{y}\right)
+\xi,
\]
where $\tau^{+}\left(  \mathbf{y}\right)  $ is independent of $\xi$. The
Skorokhod problem is monotone with respect to the initial condition, i.e.
$\eta^{1}\left(  \mathbf{y}\right)  \leq\eta^{1}\left(  \mathbf{y}^{\prime
}\right)  $ whenever $\mathbf{y}\leq\mathbf{y}^{\prime}$ (see Theorem 1.1 (i)
of \cite{KellaRamasubramanian_2012}). As a result, we can iteratively apply
the previous reasoning. In particular, let $\xi_{1},\xi_{2},...$ be i.i.d.
copies of $\xi$ and independent of $\tau^{+}\left(  \mathbf{y}\right)  $.
Then, we can construct a coupling so that
\begin{align}
\eta^{1}\left(  \mathbf{y}\right)   &  \leq\tau^{+}\left(  \mathbf{y}\right)
+\xi_{1},\label{Stoch_Dom}\\
\eta^{2}\left(  \mathbf{y}\right)   &  \leq\tau^{+}\left(  \mathbf{y}\right)
+\xi_{1}+\xi_{2},\nonumber\\
&  ...\nonumber\\
\eta^{n}\left(  \mathbf{y}\right)   &  \leq\tau^{+}\left(  \mathbf{y}\right)
+\xi_{1}+...+\xi_{n}.\nonumber
\end{align}
Based on the bound of the moment-generating function of $\tau^{+}(\mathbf{y})$
in Lemma \ref{Lem_LB_CT} and Lemma \ref{Lemma_Stoch_Dom}, we have the
following result on the moment-generating function of $\eta^{n}(\mathbf{y})$
for all $n\geq1$.

\begin{lemma}
\label{Lem_LB_DT}For $n\geq1$,%
\[
E\exp\left(  \chi\left(  \theta\right)  \eta^{n}\left(  \mathbf{y}\right)
\right)  \leq\exp\left(  h\left(  \mathbf{y};\theta,\varepsilon\right)
\right)  \left(  \frac{\phi_{d}\left(  \theta\right)  \exp\left(  \chi(\theta)
+\varepsilon\log\left(  d\right)  \right)  p}{1-(1-p)\phi_{d}\left(
\theta\right)  \exp\left(  \chi(\theta) +\varepsilon\log\left(  d\right)
\right)  }\right)  ^{nd}.
\]
Moreover, suppose that $\varepsilon,$ $\theta>0$ are chosen so that%
\begin{equation}
\phi_{d}\left(  \theta\right)  \exp\left(  \chi\left(  \theta\right)
+\varepsilon\log\left(  d\right)  \right)  \leq\frac{1}{\text{ }(1-p)\left(
1+p\right)  }. \label{Select_Theta}%
\end{equation}
Then,
\[
E\exp\left(  \chi\left(  \theta\right)  \eta^{n}\left(  \mathbf{y}\right)
\right)  \leq\exp\left(  h\left(  \mathbf{y};\theta,\varepsilon\right)
\right)  \left(  1-p\right)  ^{-nd}.
\]

\end{lemma}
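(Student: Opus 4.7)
The plan is to combine the sample-path domination from \eqref{Stoch_Dom}, $\eta^n(\mathbf{y}) \leq \tau^+(\mathbf{y}) + \xi_1 + \cdots + \xi_n$ (the $\xi_j$'s being i.i.d.\ copies of $\xi = \sum_{i=1}^d \sum_{k=1}^{G^i}(1 + \tau^+(\Theta_d^{i,k}\mathbf{1}))$ and independent of $\tau^+(\mathbf{y})$), with the moment-generating-function bounds of Lemmas \ref{Lem_LB_CT} and \ref{Lemma_Stoch_Dom}. Using independence and the monotonicity of $\exp$, I would begin from
\begin{equation*}
E[\exp(\chi(\theta)\eta^n(\mathbf{y}))] \leq E[\exp(\chi(\theta)\tau^+(\mathbf{y}))] \cdot \bigl(E[\exp(\chi(\theta)\xi)]\bigr)^n,
\end{equation*}
and bound the first factor by $\exp(h(\mathbf{y};\theta))$ directly from \eqref{in_0_Mg}, dropping the nonnegative term $h(\mathbf{Y}^+(\tau^+(\mathbf{y}));\theta)$ on the left-hand side of that inequality.

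Next I would evaluate $E[\exp(\chi(\theta)\xi)]$ by factorizing over the index $i$ and then summing the geometric random number of terms in $k$. Because the $G^i$, the $\Theta_d^{i,k}$, and the independent copies of $\tau^+$ used in the construction of Lemma \ref{Lemma_Stoch_Dom} are mutually independent,
\begin{equation*}
E[\exp(\chi(\theta)\xi)] = \prod_{i=1}^d E\!\left[\prod_{k=1}^{G^i}\exp\bigl(\chi(\theta)(1+\tau^+(\Theta_d^{i,k}\mathbf{1}))\bigr)\right].
\end{equation*}
Conditioning on $G^i = m$ and using the i.i.d.\ structure across $k$ reduces each inner expectation to $a^m$, where
\begin{equation*}
a := \exp(\chi(\theta)) \cdot E\bigl[\exp(\chi(\theta)\tau^+(\Theta_d^{1,1}\mathbf{1}))\bigr].
\end{equation*}
Applying Lemma \ref{Lem_LB_CT} conditionally on $\Theta_d^{1,1}$, together with the inequality $h(\Theta\mathbf{1};\theta) \leq \theta\Theta + \varepsilon\log d$ from its statement, yields $a \leq \phi_d(\theta)\exp(\chi(\theta) + \varepsilon\log d)$. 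Summing over $G^i$ (geometric with success probability $p$) gives $E[a^{G^i}] = ap/(1-(1-p)a)$, valid as long as $(1-p)a < 1$. Multiplying $d$ such factors and raising to the $n$-th power produces the first claimed bound.

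For the ``moreover'' part, the hypothesis \eqref{Select_Theta} forces $(1-p)a \leq 1/(1+p)$, hence $1-(1-p)a \geq p/(1+p)$, which in turn gives $ap/(1-(1-p)a) \leq a(1+p) \leq 1/(1-p)$; plugging this into the first bound yields the $(1-p)^{-nd}$ factor. The only real subtlety in the argument is organizing the conditional independence supplied by the construction in Lemma \ref{Lemma_Stoch_Dom}: one must verify that each $\tau^+(\Theta_d^{i,k}\mathbf{1})$ can be treated as an independent copy of $\tau^+$ evaluated at an independently drawn initial state, and that this independence persists when summing over the random number $G^i$ of terms. Once this bookkeeping is in place, Lemma \ref{Lem_LB_CT} supplies the per-trial moment bound and the closed-form geometric-compound generating function completes the proof.
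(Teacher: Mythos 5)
Your proposal is correct and follows essentially the same route as the paper: bound $E\exp(\chi(\theta)\tau^+(\mathbf{y}))$ by $\exp(h(\mathbf{y};\theta))$ via Lemma \ref{Lem_LB_CT}, bound the per-trial factor by $\phi_d(\theta)\exp(\chi(\theta)+\varepsilon\log d)$ using $h(\Theta\mathbf{1};\theta)\le\theta\Theta+\varepsilon\log d$, compound over the geometric $G^i$ and the $d$ coordinates, and use independence of $\tau^+(\mathbf{y})$ and the $\xi_j$'s. Your direct algebra for the ``moreover'' part ($1-(1-p)a\ge p/(1+p)$, hence $ap/(1-(1-p)a)\le a(1+p)\le(1-p)^{-1}$) is equivalent to the paper's appeal to the monotonicity of $x\mapsto xp/(1-(1-p)x)$.
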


\bigskip

Finally, we obtain the following lemma, which takes us very close to the proof
of Proposition \ref{prop:A2}.

\begin{lemma}
\label{Lem_LB_Nt_Bound}Assume that $p=\min(p_{0}, \beta/d)$, $\varepsilon$ and
$\theta>0$ satisfies (\ref{Select_Theta}) and $\chi\left(  \theta\right)  $ is
defined according to (\ref{Select_chi}), we obtain that
\[
E\left(  \left(  1-p\right)  ^{d\cdot\mathcal{N}\left(  t;\mathbf{y}\right)
}\right)  \leq\exp\left(  h\left(  \mathbf{y;}\theta,\varepsilon\right)
\right)  \cdot\exp\left(  -\chi\left(  \theta\right)  t\right)  \cdot\left(
1-p\right)  ^{-d}.
\]

\end{lemma}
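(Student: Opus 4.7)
The plan is to pass from $\eta^n(\mathbf{y})$ to its i.i.d.-renewal upper bound $T_n := \tau^+(\mathbf{y}) + \xi_1 + \cdots + \xi_n$ supplied by \eqref{Stoch_Dom}, and then apply optional stopping at the first-passage time $\mathcal{M}(t) := \min\{n \geq 0 : T_n > t\}$. Because $\eta^n \leq T_n$, any $n$ with $T_n \leq t$ also has $\eta^n \leq t$, so $\mathcal{M}(t) \leq \mathcal{N}(t;\mathbf{y}) + 1$; since $(1-p)^d < 1$, this yields the pathwise bound $(1-p)^{d\mathcal{N}(t;\mathbf{y})} \leq (1-p)^{-d}(1-p)^{d\mathcal{M}(t)}$. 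It therefore suffices to prove $E[(1-p)^{d\mathcal{M}(t)}] \leq \exp(h(\mathbf{y};\theta) - \chi(\theta)t)$.

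The central claim is that $N_n := (1-p)^{dn}\exp(\chi(\theta)T_n)$ is a nonnegative supermartingale for $\mathcal{G}_n := \sigma(\tau^+(\mathbf{y}), \xi_1, \ldots, \xi_n)$. The identity $N_{n+1} = N_n \cdot (1-p)^d \exp(\chi(\theta)\xi_{n+1})$ together with the independence of $\xi_{n+1}$ from $\mathcal{G}_n$ reduces this to $E[\exp(\chi(\theta)\xi)] \leq (1-p)^{-d}$. I plan to verify the latter by decomposing $\xi = \sum_{i=1}^d W^i$ with i.i.d.\ $W^i := \sum_{k=1}^{G^i}(1 + \tau^+(\Theta_d^{i,k}\mathbf{1}))$, and handling one factor as follows: condition on $\Theta_d$ and apply Lemma~\ref{Lem_LB_CT} at $\mathbf{y} = \Theta_d\mathbf{1}$ (using $h(\Theta_d\mathbf{1};\theta) \leq \theta\Theta_d + \varepsilon\log d$) to bound $\alpha := E[\exp(\chi(\theta)(1+\tau^+(\Theta_d\mathbf{1})))]$ by $\phi_d(\theta)\exp(\chi(\theta) + \varepsilon\log d)$; then condition on $G^i$ and sum the geometric series to obtain $E[\alpha^{G^i}] = p\alpha/(1-(1-p)\alpha)$; finally, invoke \eqref{Select_Theta}, which forces $(1-p)\alpha \leq 1/(1+p)$ and hence $E[\alpha^{G^i}] \leq (1+p)\alpha \leq (1-p)^{-1}$. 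Taking the $d$-fold product yields $E[\exp(\chi(\theta)\xi)] \leq (1-p)^{-d}$.

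Since $T_n \to \infty$ almost surely, $\mathcal{M}(t) < \infty$ a.s., and the optional stopping theorem for nonnegative supermartingales---applied via Fatou's lemma to $N_{\mathcal{M}(t)\wedge n}$---gives $E[N_{\mathcal{M}(t)}] \leq E[N_0] = E[\exp(\chi(\theta)\tau^+(\mathbf{y}))] \leq \exp(h(\mathbf{y};\theta))$, where the last inequality is Lemma~\ref{Lem_LB_CT} combined with $h \geq 0$. Since $T_{\mathcal{M}(t)} > t$ implies $N_{\mathcal{M}(t)} \geq (1-p)^{d\mathcal{M}(t)}\exp(\chi(\theta)t)$, we conclude $E[(1-p)^{d\mathcal{M}(t)}] \leq \exp(h(\mathbf{y};\theta) - \chi(\theta)t)$, and the first-paragraph reduction completes the proof. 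I expect the main obstacle to be the supermartingale verification in the second paragraph: the three independent sources of randomness inside $\xi$ (the geometric count $G^i$, the random height $\Theta_d$, and $\tau^+$ itself) must be composed in exactly the right order so that hypothesis \eqref{Select_Theta} closes the chain tightly at $(1-p)^{-1}$ per coordinate and hence $(1-p)^{-d}$ overall; given that closure, the rest is a standard nonnegative-supermartingale stopping argument.
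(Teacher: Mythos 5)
Your proof is correct and follows essentially the same route as the paper: your stopping time $\mathcal{M}(t)$ is the paper's $\bar N(t)+1$, your supermartingale $N_n$ is the paper's $M_n$ up to the constant factor $\exp(-h(\mathbf{y};\theta))$, and your verification that $E[\exp(\chi(\theta)\xi)]\leq(1-p)^{-d}$ under \eqref{Select_Theta} is exactly the computation the paper delegates to Lemma \ref{Lem_LB_DT}. No gaps worth noting.
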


\bigskip

We now have all the ingredients required to provide a the proof of Proposition
\ref{prop:A2}.

\begin{proof}[Proof of Proposition \protect\ref{prop:A2}]
By Lemma \ref{Lem_LB_Nt_Bound}, the only step that remains is to select $\theta$, $\varepsilon$ satisfying \eqref{Select_Theta} and to estimate
the behavior of $\chi \left( \theta \right) $ assuming our selection of $p$ in Lemma  \ref{Lem_LB_Nt_Bound}. Given that $p=\min(p_0,\beta/d)$, we have
\begin{equation*}
\left( 1-p\right) ^{d}\geq \left( 1-\beta \right) .
\end{equation*}%
We then choose $\varepsilon $, $\theta $ as follows:
\begin{eqnarray*}
\varepsilon  &= &\frac{\beta^2 }{2d^{2}\log \left( d\right) }\text{, } \\
\theta  &= &\frac{\varepsilon }{2\varepsilon +1}\cdot \frac{\delta _{1}}{%
(1+d)\max_{i=1}^{d}\sigma _{i}^{2}}\leq \frac{\delta _{1}\cdot \beta^2 }{%
2d^{3}\log \left( d\right) \max_{i=1}^{d}\sigma _{i}^{2}}, \\
\end{eqnarray*}%
and hence
$$\chi \left( \theta \right) =\theta \frac{\delta _{1}}{2(1+d)}\leq\frac{%
\delta _{1}^{2}\cdot \beta^2 }{4d^{4}\log \left( d\right) \max_{i=1}^{d}\sigma
_{i}^{2}}.
$$
Therefore, for $d$ sufficiently large,
\begin{eqnarray*}
&&\phi _{d}\left( \theta \right) \exp \left( \chi \left( \theta \right)
+\varepsilon \log \left( d\right) \right)\\
&\leq &\exp \left( \chi(\theta)+\varepsilon \log \left( d\right) \right) \left( 1+2(1-p)^{-1}\theta \log \left( 1+d\right) \exp \left( \theta \log \left( 2+d\right)
^{2/3}\right) +\frac{\varepsilon}{4}\right)  \\
&\leq& \exp\left(\frac{%
\delta _{1}^{2}\cdot \beta^2 }{4d^{4}\log \left( d\right) \max_{i=1}^{d}\sigma
_{i}^{2}}+\frac{\beta^2}{2d^2}\right) \left( 1+2(1-p)^{-1}\theta \log \left( 1+d\right) \exp \left( \theta \log \left( 1+d\right)
^{2/3}\right)+\frac{\varepsilon}{4} \right) ,
\end{eqnarray*}
where the first inequality follows from (\ref{Bnd_fhi_d}) and the fact that the big-O term in \eqref{Bnd_fhi_d} goes to 0 as $d\to\infty$. Given our choice of $\theta$, we have
$$\exp\left(\frac{%
\delta _{1}^{2}\cdot \beta^2 }{4d^{4}\log \left( d\right) \max_{i=1}^{d}\sigma
_{i}^{2}}\right)=1+o(d^{-2}),\text{ and }\theta\log \left( 1+d\right) \exp \left( \theta \log \left( 1+d\right)
^{2/3}\right)=o(d^{-2}),$$
as $d\to\infty$.
Hence, our choice of $\varepsilon$ and $\theta$ satisfies that, for $d$ sufficiently large,
\begin{align*}
\phi _{d}\left( \theta \right) \exp \left( \chi \left( \theta \right)
+\varepsilon \log \left( d\right) \right)&\leq \left(1+\frac{\beta^2}{2d^2}\right)\left(1+\frac{\beta^2}{4d^2}\right)+o(d^{-2})\\
&\leq 1+\frac{\beta^2}{d^2}\leq \frac{1}{\left( 1-\beta /d\right) \left( 1+\beta /d\right) },
%=\frac{1}{\left( 1-\beta ^{2}/d^{2}\right) }=1+\frac{\beta ^{2}}{d^{2}}+O\left( \frac{%
%\beta ^{4}}{d^{4}}\right) +...
\end{align*}
which is exactly the inequality (\ref{Select_Theta}). On the other hand, note that $\epsilon\leq 1/2$, so when $d\geq 3$, we have
$$\chi \left( \theta \right) =\theta \frac{\delta _{1}}{2(1+d)}=\frac{1}{2\varepsilon+1}\cdot\frac{\delta^2_1\beta^2}{4d^2(1+d)^2\log(d)\max_{i=1}^d\sigma_i^2}\geq \frac{\delta^2_1\beta^2}{16d^4\log(d)\max_{i=1}^d\sigma_i^2}.$$
Now, let
\begin{equation*}
\zeta _{0}=\frac{\delta _{1}\cdot \beta^2 }{2\max_{i=1}^{d}\sigma _{i}^{2}},%
\text{ \ }\zeta _{1}=\frac{\delta _{1}^{2}\cdot \beta^2 }{16\max_{i=1}^{d}\sigma
_{i}^{2}}.
\end{equation*}%
According to Lemma \ref{Lem_LB_Nt_Bound} and the fact that $(1-p)^d\geq(1-\beta/d)^d\geq 1-\beta$, we have
\begin{align*}
E\left( 1-\beta \right) ^{\mathcal{N}\left( t;\mathbf{y}\right) }
&\leq \exp(h(\mathbf{y};\theta,\varepsilon)\exp(-\chi(\theta)t)(1-p)^{-d}\\
&\leq \exp(\theta\|\mathbf{y}\|_\infty+\varepsilon\log(d))\exp(-\chi(\theta)t)(1-p)^{-d}\\
&\leq  \exp
\left( \zeta _{0}\left\Vert \mathbf{y}\right\Vert _{\infty }/(d^{3}\log
\left( d\right) )+\beta /d^{2}\right) \cdot \exp \left( -\zeta
_{1}t/(d^{4}\log \left( d\right) )\right) \cdot \left( 1-\beta \right) ^{-1},
\end{align*}
where the second inequality follows Lemma \ref{Lem_LB_CT} and the last inequality follows our choice of $\theta$ and $\varepsilon$.
\end{proof}

\bigskip

We close this section with the proof of the technical results behind the proof
of Proposition \ref{prop:A2}.

\subsection{Technical Proofs of Auxiliary Results Behind Proposition
\ref{prop:A2}\label{Sec:proofs}}

We provide the proofs in the order in which we presented the auxiliary
results. First, the main ingredient behind Lemma \ref{Lem_LB_CT} is the
following result:\bigskip

\begin{lemma}
\label{Lemma_Aux_Mom}Suppose that there exists a non-negative and twice
continuously differentiable function $h\left(  \cdot\right)  $ and a constant
$\chi>0$ satisfying the following two conditions:\newline

\begin{enumerate}
\item For all $\mathbf{y}=(y_{1},...,y_{d})^{T}\in R_{+}^{d}$ with $\left\Vert
\mathbf{y}\right\Vert _{\infty}\geq1$%
\begin{equation}
\left(  \boldsymbol{\mu}-\boldsymbol{\mu}^{+}\right)  ^{T}Dh\left(
\mathbf{y}\right)  +\frac{1}{2}Tr\left(  \Sigma D^{2}h\left(  \mathbf{y}%
\right)  \right)  +\frac{1}{2}Dh\left(  \mathbf{y}\right)  ^{T}\Sigma
Dh\left(  \mathbf{y}\right)  \leq-\chi, \label{Sub_Sol}%
\end{equation}
where $Dh\left(  \mathbf{y}\right)  $ and $D^{2}h\left(  \mathbf{y}\right)  $
are the first and second derivatives of $h\left(  \cdot\right)  $ evaluated at
$\mathbf{y}$, respectively. (We encode $Dh\left(  \mathbf{y}\right)  $ as
column vector.) \newline

\item For any $\mathbf{y}=(y_{1},...,y_{d})^{T}\in\partial R_{+}^{d}$,
\begin{equation}
Dh\left(  \mathbf{y}\right)  ^{T}\mathbf{w}\leq0 \text{ for all }\mathbf{w}%
\in\mathcal{Z}_{\mathbf{y}}, \label{Ref_Cond}%
\end{equation}
where
\[
\mathcal{Z}_{\mathbf{y}}=\{\mathbf{w}=(w_{1},...,w_{d})^{T}\in R_{+}^{d}%
:w_{l}>0\text{ if and only if }y_{l}=0\}.
\]

\end{enumerate}

Then, for any $\left\Vert \mathbf{y}\right\Vert _{\infty}\geq1$,
\[
E\exp\left(  h\left(  \mathbf{Y}^{+}\left(  \tau^{+}\left(  \mathbf{y}\right)
\right)  \right)  +\chi\tau^{+}\left(  \mathbf{y}\right)  \right)  \leq
\exp\left(  h\left(  \mathbf{y}\right)  \right)  .
\]
In particular,
\[
E\exp\left(  \chi\tau^{+}\left(  \mathbf{y}\right)  \right)  \leq\exp\left(
h\left(  \mathbf{y}\right)  \right)  .
\]

\end{lemma}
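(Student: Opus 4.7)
The plan is to apply It\^{o}'s formula to $M(t) := \exp\bigl(h(\mathbf{Y}^{+}(t)) + \chi t\bigr)$ and argue that $M(\cdot \wedge \tau^{+}(\mathbf{y}))$ is a nonnegative supermartingale; the asserted bound $E[\exp(h(\mathbf{Y}^{+}(\tau^{+}))+\chi \tau^{+})]\le \exp(h(\mathbf{y}))$ is then precisely $E[M(\tau^{+})]\le M(0)$, delivered by optional sampling. The starting point is the Skorokhod decomposition
\begin{equation*}
d\mathbf{Y}^{+}(t) = (\boldsymbol{\mu}-\boldsymbol{\mu}^{+})\, dt + C\, d\mathbf{B}(t) + d\mathbf{L}^{+}(t),
\end{equation*}
in which each $L^{+}_{i}$ is nondecreasing and grows only on $\{t: Y^{+}_{i}(t)=0\}$, since the reflection matrix for $\mathbf{Y}^{+}$ is the identity.

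Applying It\^{o}'s formula to $F(\mathbf{y},t)=\exp(h(\mathbf{y})+\chi t)$ along $\mathbf{Y}^{+}(t)$ and grouping the drift, Brownian, and local-time contributions gives
\begin{equation*}
\frac{dM(t)}{M(t)} = \Bigl[\chi + (\boldsymbol{\mu}-\boldsymbol{\mu}^{+})^{T}Dh + \tfrac{1}{2}\mathrm{Tr}(\Sigma D^{2}h) + \tfrac{1}{2}Dh^{T}\Sigma Dh\Bigr]_{\mathbf{Y}^{+}(t)}\, dt + Dh(\mathbf{Y}^{+}(t))^{T}C\, d\mathbf{B}(t) + Dh(\mathbf{Y}^{+}(t))^{T}\, d\mathbf{L}^{+}(t),
\end{equation*}
with the $\tfrac{1}{2}Dh^{T}\Sigma Dh$ term arising from the quadratic variation of $F$. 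For $t<\tau^{+}(\mathbf{y})$ at least one coordinate of $\mathbf{Y}^{+}(t)$ strictly exceeds $1$, so $\|\mathbf{Y}^{+}(t)\|_{\infty}> 1\ge b$ (the lemma is used in the regime $b\le 1$), and condition~(\ref{Sub_Sol}) makes the bracketed $dt$-coefficient nonpositive. For the reflection term, $dL^{+}_{i}$ is supported on $\{Y^{+}_{i}=0\}$, so at such times $\mathbf{Y}^{+}(t)\in \partial R^{d}_{+}$ and condition~(\ref{Ref_Cond}), applied to the vectors $\mathbf{e}_{i}+\varepsilon\sum_{l\neq i,\,Y^{+}_{l}(t)=0}\mathbf{e}_{l}\in \mathcal{Z}_{\mathbf{Y}^{+}(t)}$ with $\varepsilon\downarrow 0$, yields $\partial_{y_{i}}h(\mathbf{Y}^{+}(t))\le 0$ whenever $Y^{+}_{i}(t)=0$; summing over $i$ gives $Dh(\mathbf{Y}^{+}(t))^{T}d\mathbf{L}^{+}(t)\le 0$. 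Together these bounds show that the finite-variation drift of $M(\cdot \wedge \tau^{+}(\mathbf{y}))$ is nonpositive, so $M(\cdot \wedge \tau^{+}(\mathbf{y}))$ is a nonnegative local supermartingale.

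To promote this to the integrated inequality, I would localize by $\tau_{n}:=\inf\{t\ge 0:\|\mathbf{Y}^{+}(t)\|_{\infty}\ge n\}\wedge n$, on which $M$ and the integrand of the Brownian stochastic integral are bounded, making that integral a true martingale; optional sampling yields $E[M(\tau^{+}\wedge \tau_{n})]\le M(0)=\exp(h(\mathbf{y}))$. Because each coordinate of $\mathbf{Y}^{+}$ is a one-dimensional reflected Brownian motion with drift $-\delta_{1}<0$ and variance bounded by $b_{0}$, we have $\tau^{+}(\mathbf{y})<\infty$ almost surely, hence $\tau^{+}\wedge \tau_{n}\uparrow \tau^{+}$; Fatou's lemma (exploiting $M\ge 0$) then gives $E[M(\tau^{+})]\le \exp(h(\mathbf{y}))$. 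The main obstacles I anticipate are (i) the limited regularity of a candidate $h$: in the intended application of Lemma \ref{Lem_LB_CT}, $h(\mathbf{y};\theta)$ is built from $g(\theta y_{i})$ with $g$ only $C^{1}$ across $y_{i}=1$, so rigorously justifying the $\mathrm{Tr}(\Sigma D^{2}h)$ contribution may require mollifying $g$ and passing to the limit; and (ii) the localization--Fatou bookkeeping, where the hypothesis $h\ge 0$ is essential, since it lets the nonnegativity of $M$ substitute for a uniform-integrability argument on $M(\tau^{+}\wedge\tau_{n})$.
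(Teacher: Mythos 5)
Your proposal is correct and follows essentially the same route as the paper: an It\^{o}/exponential-supermartingale argument in which condition (\ref{Sub_Sol}) kills the drift, condition (\ref{Ref_Cond}) kills the boundary (local time) term, and optional sampling at $\tau^{+}(\mathbf{y})$ gives the bound. The only cosmetic difference is that the paper applies It\^{o} to $h(\mathbf{Y}^{+}(t))$ and then invokes the exponential local martingale $\bar{M}(t)$, whereas you apply It\^{o} directly to $\exp(h(\mathbf{Y}^{+}(t))+\chi t)$; your localization/Fatou bookkeeping and the remark on mollifying $g$ are in fact more careful than the paper's treatment.
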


\begin{proof}[Proof of Lemma \protect\ref{Lemma_Aux_Mom}]
Note that Ito's lemma yields that for a twice continuously differentiable $%
h\left( \cdot \right) $%
\begin{eqnarray}
\label{Dyn_1}&&~~~~~h\left( \mathbf{Y}^{+}\left( t\right) \right) -h\left( \mathbf{Y}%
^{+}\left( 0\right) \right)    \\
&=&\int_{0}^{t}\left( \mathcal{A}h\right) \left( \mathbf{Y}^{+}\left(
s\right) \right) ds+\int_{0}^{t}Dh\left( \mathbf{Y}^{+}\left( s\right)
\right) d\mathbf{L}^{+}\left( s\right) +\int_{0}^{t}Dh\left( \mathbf{Y}^+%
\left( s\right) \right) Cd\mathbf{B}\left( s\right) ,  \notag
\end{eqnarray}%
where $C$ is the Cholesky decomposition matrix such that $CC^{T}=\Sigma $, and%
\begin{equation*}
\left( \mathcal{A}h\right) \left( \mathbf{y}\right) ds\doteq\left( \boldsymbol{%
\mu }-\boldsymbol{\mu }^{+}\right) ^{T}Dh\left( \mathbf{y}\right) +\frac{1}{2%
}Tr\left( \Sigma D^{2}h\left( \mathbf{y}\right) \right) .
\end{equation*}%
We know that
\begin{equation*}
\bar{M}\left( t\right) =\exp \left( \int_{0}^{t}Dh\left( \mathbf{Y}^+\left(
s\right) \right) Cd\mathbf{B}\left( s\right) -\frac{1}{2}\int_{0}^{t}Dh%
\left( \mathbf{Y}^+\left( s\right) \right) ^{T}\Sigma Dh\left( \mathbf{Y}^+%
\left( s\right) \right) ds\right)
\end{equation*}%
is a non-negative local martingale and, therefore, a supermartingale. We
thus conclude that
\begin{equation*}
E_{\mathbf{y}}\bar{M}\left( t\right) \leq 1.
\end{equation*}%
Since $dL^+_i(s)\geq 0$ and $Y^+_i(s)dL^+_i(s)=0$ for all $1\leq i\leq d$ and $s\geq 0$, $dL^+_i(s)=0$ for all $Y^+_i(s)>0$. Besides, $P(\exists \delta>0, \text{ such that }X_i(s+t)-X_i(s)\geq 0\text{ for all }0\leq t\leq \delta)=0$ by the nature of Brownian motion, and hence $dL^+_i(s)>0$ w.p.1 for $Y^+_i(s)=0$. Therefore, we  know that $d\mathbf{L}^+(s)\in \mathcal{Z}_{\mathbf{Y}^+(s)}$ for all $s\geq 0$. Under Condition 2, $\int Dh(\mathbf{Y}^+(s))d\mathbf{L}^+(s)\leq 0$.
Substituting (\ref{Dyn_1}) into $\bar{M}\left( t\right) $, we obtain that%
\begin{equation*}
E_{\mathbf{y}}\exp \left( h\left( \mathbf{Y}^{+}\left( \tau ^{+}(\mathbf{y}%
)\right) \right) -h\left( \mathbf{y}\right) +\chi \tau ^{+}(\mathbf{y}%
)\right) \leq E_{\mathbf{y}}\bar{M}\left( t\right) \leq 1.
\end{equation*}%
Because $h\left( \cdot \right) \geq 0$, we conclude that
\begin{equation*}
E_{\mathbf{y}}\exp \left( -h\left( \mathbf{y}\right) +\chi \tau ^{+}(\mathbf{%
y})\right) \leq 1,
\end{equation*}%
which is equivalent to the statement of the result.
\end{proof}

\bigskip

Using the previous result, we now can provide the proof of Lemma
\ref{Lem_LB_CT}.

\begin{proof}[Proof of Lemma \protect\ref{Lem_LB_CT}]
We start by computing the first and second derivatives of $h\left( \cdot
\right) $. Let%
\begin{equation*}
\mathbf{\ }w_{i}(y,\varepsilon )=\frac{\exp (g(\theta y_{i})/\varepsilon )}{%
\sum_{j=1}^{d}\exp (g(\theta y_{j})/\varepsilon )}.
\end{equation*}%
Note that
\begin{eqnarray*}
Dh\left( \mathbf{y}\right)  &=&\sum_{i=1}^{d}w_{i}\left( y,\varepsilon
\right) g^{\prime }\left( \theta e_{i}^{T}\mathbf{y}\right) \theta e_{i} \\
D^{2}h\left( \mathbf{y}\right)  &=&\theta ^{2}\sum_{i=1}^{d}w_{i}\left(
y,\varepsilon \right) g^{\prime \prime }\left( \theta e_{i}^{T}\mathbf{y}%
\right) e_{i}e_{i}^{T} \\
&&+\frac{\theta ^{2}}{\varepsilon }\sum_{i=1}^{d}w_{i}\left( y,\varepsilon
\right) g^{\prime }\left( \theta e_{i}^{T}\mathbf{y}\right)
^{2}e_{i}e_{i}^{T} \\
&&-\frac{\theta ^{2}}{\varepsilon }\sum_{i,j=1}^{d}w_{i}\left( y,\varepsilon
\right) w_{j}\left( y,\varepsilon \right) g^{\prime }\left( \theta e_{i}^{T}%
\mathbf{y}\right) g^{\prime }\left( \theta e_{j}^{T}\mathbf{y}\right)
e_{i}e_{j}^{T}.
\end{eqnarray*}%
Therefore,
\begin{eqnarray*}
Tr\left( \Sigma D^{2}h\left( \mathbf{y}\right) \right)  &\leq &\frac{\theta
^{2}}{\varepsilon }\sum_{i=1}^{d}w_{i}\left( y,\varepsilon \right) \sigma
_{i}^{2}(\varepsilon g^{\prime \prime }\left( \theta e_{i}^{T}\mathbf{y}%
\right) +g^{\prime }\left( \theta e_{i}^{T}\mathbf{y}\right) ^{2}), \\
Dh\left( \mathbf{y}\right) ^{T}\Sigma Dh\left( \mathbf{y}\right)  &=&\theta
^{2}\left(\sum_{i=1}^{d}w_{i}\left( y,\varepsilon \right) g^{\prime }\left( \theta
e_{i}^{T}\mathbf{y}\right) \sigma _{i}\right)^{2}\leq \theta
^{2}\max_{i=1}^{d}\sigma _{i}^{2}, \\
\left( \boldsymbol{\mu }-\boldsymbol{\mu }^{+}\right) ^{T}Dh\left( \mathbf{y}%
\right)  &=&-\theta \sum_{i=1}^{d}w_{i}\left( y,\varepsilon \right)
g^{\prime }\left( \theta e_{i}^{T}\mathbf{y}\right) \delta _{1}.
\end{eqnarray*}%
Because $-w_{i}\left( y,\varepsilon \right) g^{\prime }\left( \theta
e_{i}^{T}\mathbf{y}\right) \delta _{1}\leq 0$, we have that
\begin{eqnarray*}
-\sum_{i=1}^{d}w_{i}\left( y,\varepsilon \right) g^{\prime }\left( \theta
e_{i}^{T}\mathbf{y}\right) \delta _{1} &\leq &-\sum_{i=1}^{d}w_{i}\left(
y,\varepsilon \right) g^{\prime }\left( \theta e_{i}^{T}\mathbf{y}\right)
\delta _{1}I\left( e_{i}^{T}\mathbf{y}\geq 1\right)  \\
&=&-\sum_{i=1}^{d}w_{i}\left( y,\varepsilon \right) \delta _{1}I\left(
e_{i}^{T}\mathbf{y}\geq 1\right)  \\
&\leq &\frac{-\delta _{1}}{1+d},
\end{eqnarray*}%
where in the last inequality we use the fact that, for $\left\Vert \mathbf{y}%
\right\Vert _{\infty }\geq 1$,
\begin{equation*}
\sum_{i=1}^{d}w_{i}\left( y,\varepsilon \right) I\left( e_{i}^{T}\mathbf{y}%
\geq 1\right) \geq \frac{1}{d+1}.
\end{equation*}%
On the other hand,
\begin{eqnarray*}
\sum_{i=1}^{d}w_{i}\left( y,\varepsilon \right) \sigma _{i}^{2}g^{\prime
\prime }\left( \theta e_{i}^{T}\mathbf{y}\right)  &\leq
&\sum_{i=1}^{d}w_{i}\left( y,\varepsilon \right) \sigma _{i}^{2}\leq
\max_{i=1}^{d}\sigma _{i}^{2}, \\
\sum_{i=1}^{d}w_{i}\left( y,\varepsilon \right) \sigma _{i}^{2}g^{\prime
}\left( \theta e_{i}^{T}\mathbf{y}\right) ^{2} &\leq
&\sum_{i=1}^{d}w_{i}\left( y,\varepsilon \right) \sigma _{i}^{2}\leq
\max_{i=1}^{d}\sigma _{i}^{2}.
\end{eqnarray*}%
We conclude that%
\begin{eqnarray*}
&&\left( \boldsymbol{\mu }-\boldsymbol{\mu }^{+}\right) ^{T}Dh\left( \mathbf{%
y}\right) +\frac{1}{2}Tr\left( \Sigma D^{2}h\left( \mathbf{y}\right) \right)
+\frac{1}{2}Dh\left( \mathbf{y}\right) ^{T}\Sigma Dh\left( \mathbf{y}\right)
\\
&\leq &\frac{-\theta }{1+d}\delta _{1}+\theta ^{2}\frac{1}{2}%
\max_{i=1}^{d}\sigma _{i}^{2}+\theta ^{2}\frac{1}{2\varepsilon }%
\max_{i=1}^{d}\sigma _{i}^{2}+\frac{\theta ^{2}}{2}\max_{i=1}^{d}\sigma
_{i}^{2} \\
&\leq &\theta \max_{i=1}^{d}\sigma _{i}^{2}\cdot \left( \theta \left( \frac{%
2\varepsilon +1}{2\varepsilon }\right) -\frac{\delta _{1}}{%
\max_{i=1}^{d}\sigma _{i}^{2}(1+d)}\right) \leq -\theta \frac{\delta _{1}}{%
2(1+d)},
\end{eqnarray*}%
assuming that%
\begin{equation*}
\theta \leq \frac{\varepsilon }{2\varepsilon +1}\cdot \frac{\delta _{1}}{%
(1+d)\max_{i=1}^{d}\sigma _{i}^{2}}.
\end{equation*}%
Therefore, we conclude that the condition (\ref{Sub_Sol})\ holds for $\left\Vert
\mathbf{y}\right\Vert _{\infty }\geq 1$. On the other hand, since $Dh\left( \mathbf{y}%
\right) ^{T}e_{i}=g^{\prime }\left( \theta y_{i}\right) =0$, if $y_{i}=0$, we
also satisfy (\ref{Ref_Cond}). Finally, we apply Lemma 9 and conclude  \eqref{in_0_Mg}.
\end{proof}

\bigskip Now, we prove the success probability of coupon collection is
uniformly bounded from 0.

\begin{proof}[Proof of Lemma \protect\ref{Lem:Coupon}]
For any fixed $i\in \{1,...,d\}$, note that the event $Y_{i}(t)=0\text{ for some
}t\leq 1$ is equivalent to $L_{i}(1)>0$ and hence
\begin{equation*}
P(Y_{i}(t)=0\text{ for some }t\leq 1|\mathbf{Y}(0)=\mathbf{y})=P(L_{i}(1)>0|%
\mathbf{Y}(0)=\mathbf{y}).
\end{equation*}
Let $\mathbf{Z}(t)=R^{-1}(y_{0}+\mathbf{X}(t))$. Define $(\mathbf{Y}^*,\mathbf{L}^*)$ to be the solution to the following Skorokhod problem:
\begin{equation*}
\mathbf{Y}^{\ast }(t)=\mathbf{Z}(t)+\mathbf{L}^{\ast }(t)\geq 0, %
\mathbf{L}^{\ast }(t)= 0.
\end{equation*}%
In particular, the process $\mathbf{L}^*(\cdot)$ is nondecreasing and $Y_{i}^{\ast }(t)dL_{i}^{\ast }(t)=0$ for all $t\geq 0$.
Then,  $\mathbf{L}^{\ast }(t)$ is the minimal process that keeps $%
\mathbf{Y}^{\ast }(t)$ non-negative. Note that $R^{-1}\mathbf{Y}(t)=\mathbf{Z}%
(t)+\mathbf{L}(t)\geq 0$, therefore
\begin{equation*}
L_{i}(t)\geq L_{i}^{\ast }(t)\text{ and }Y_{i}(t)\geq Y_{i}^{\ast }(t).
\end{equation*}%
As a result,
\begin{equation*}
P(Y_{i}(t)=0\text{ for some }t\leq 1|\mathbf{Y}(0)=\mathbf{y})=P(L_{i}(1)>0|%
\mathbf{Y}(0)=\mathbf{y})\geq P(L_{i}^{\ast }(1)>0|\mathbf{Y}(0)=%
\mathbf{y}).
\end{equation*}%
By definition,
\begin{equation*}
P(L_{i}^{\ast }(1)>0|\mathbf{Y}(0)=\mathbf{y})\geq P(Z_{i}(1)<0)=P((R^{-1}%
\mathbf{y}+R^{-1}\boldsymbol{\mu }+R^{-1}C\mathbf{B}(1))_{i}<0).
\end{equation*}%
Note that following Assumption A1), $\Vert R^{-1}\mathbf{1}\Vert _{\infty
}\leq b_{1}$ and $\mathbf{y}\leq b_{1}\mathbf{1}$. Therefore, $R^{-1}%
\mathbf{y}\leq b_{1}^{2}\mathbf{1}$. Since $R^{-1}\boldsymbol{\mu }\leq
-\delta _{0}$, we have $(R^{-1}\mathbf{y}+R^{-1}\boldsymbol{\mu })_{i}\leq
b_{1}^{2}-\delta _{0}$ and hence
\begin{equation*}
P((R^{-1}\mathbf{y}+R^{-1}\boldsymbol{\mu }+R^{-1}C\mathbf{B}(1))_{i}<0)\geq
P((R^{-1}C\mathbf{B}(1))_{i}<\delta _{0}-b_{1}^{2}).
\end{equation*}%
Since $R_{ii}^{-1}\geq 1$ and $\sigma _{i}^{2}\geq b_{0}^{-1}$ according to
Assumption A3), $(R^{-1}C\mathbf{B}(1))_{i}$ is a Gaussian r.v. with
variance $\geq b_{0}^{-1}$. Therefore, we conclude that
\begin{eqnarray*}
P(Y_{i}(t) &=&0\text{ for some }t\leq 1|\mathbf{Y}(0)=\mathbf{y}) \\
&\geq &P((R^{-1}C\mathbf{B}(1))_{i}<\delta _{0}-b_{1}^{2}) \\
&\geq &P\left( N\left( 0,1\right) <\sqrt{b_{0}}(\delta
_{0}-b_{1}^{2})\right) \geq p_{0}.
\end{eqnarray*}
\end{proof}

\bigskip We continue with the proof of Lemma \ref{Lemma_Stoch_Dom}.

\begin{proof}[Proof of Lemma \protect\ref{Lemma_Stoch_Dom}]
Recall that we have defined a sequence of Bernoulli random variables $\zeta_k(i)$ jointly with the sequence $\{\mathbf{Y}^{i,k}\}$ as
$$\zeta_k(i)= 1(Y_i(t;\mathbf{Y}^{i,k})=0\text{ for some }\tau^+(k;\mathbf{Y}^{i,k})< t\leq \tau^+(k;\mathbf{Y}^{i,k})+1).$$
Let $K=\min\{k: \zeta_k(i)=1\}$. We obtain a bound for $\eta^1_i(\mathbf{y})$:
$\eta^1_i(\mathbf{y})\leq \sum_{k=1}^K (\tau^+(k;\mathbf{Y}^{i,k})+1)$.
Note that the the Skorokhod mapping is monotone with respect to the initial position, i.e.,
$$\mathbf{Y}(t;\mathbf{y}^1)\geq \mathbf{Y}(t;\mathbf{y}^2) \text{ for all }t\geq 0\text{ if }\mathbf{y}^1\geq \mathbf{y}^2.$$
As $\mathbf{Y}(\tau^+(k;\mathbf{Y}^{i,k});\mathbf{Y}^{i,k})\leq b_1\mathbf{1}$, we have
$$\mathbf{Y}^{i,k+1}\leq \mathbf{Y}(1;b_1\mathbf{1})\text{, and hence }\tau^+(k+1;\mathbf{Y}^{i,k+1})\leq \tau^+(k+1;\mathbf{Y}(1;b_1\mathbf{1})).$$
Similarly, we have
$$P(\zeta_k(i)=1)\geq P(Y_i(t;b_1\mathbf{1})=0\text{ for some }0< t\leq 1)\geq p,$$
where the last inequality follows Lemma \ref{Lem:Coupon}. As a result, we can define a Bernoulli $\psi$ jointly with $\mathbf{Y}(1;b_1\mathbf{1})$, such that for all $\mathbf{y}\geq 0$
$$P(\psi=1|\mathbf{Y}(1;b_1\mathbf{1})=\mathbf{y})\leq P(Y_i(t;b_1\mathbf{1})=0\text{ for some }0< t\leq 1 |\mathbf{Y}(1;b_1\mathbf{1})=\mathbf{y}),$$
and $P(\psi=1)=p.$
Based on the previous comparison results, we can construct a sequence of pairs $(\psi_k(i), \tau_k(i))$ to be i.i.d. copies of $(\psi, \tau^+(\mathbf{Y}(1;b_1\mathbf{1})))$, for $1\leq j\leq d$ and $k\geq 1$, and define $G^i=\inf\{k:\psi_k(i)=1\}$. Then $G^i$ is a Geometric r.v. with probability of success equal to $p$, and  $\eta^1(\mathbf{y})$ is stochastically dominated by
$$\tau^+(\mathbf{y})+\sum_{i=1}^d\sum_{k=1}^{G^i}(1+\tau_k(i)).$$
Since $(\psi_k(i), \tau_k(i))$ are i.i.d., we have that
$$\sum_{i=1}^d\sum_{k=1}^{G^i}\tau_k(i)\overset{D}{=}\sum_{i=1}^d\sum_{k=1}^{G^i}\tilde{\tau}_k(i),$$
where  for each $i$, $\{\tilde{\tau}_k(i):k\geq 1\}$ is an i.i.d. sequence following the conditional distribution of $\tau_k(i)$ conditional on that $\psi_k(i)=0$ and is independent of $G^i$.
The rest of the proof is to construct the r.v. $\Theta_d$ satisfying \eqref{Bnd_fhi_d} and that $\tau^+(\Theta_d\mathbf{1})$ stochastically dominates $\tilde{\tau}_k(i)$.
Recall that $\mathbf{Y}(1)\leq R^{-1}\mathbf{Y}(1)\leq R^{-1}\mathbf{Y}^{+}(1)$ and $%
\bar{\mathbf{X}}(t)\leq C\mathbf{B}(t)%
$ for all $t>0$, where $\mathbf{B}(t)$ is a standard Brownian motion. By the property of the Skorokhod mapping
with the identity reflection matrix, we have
\begin{align*}
Y_{i}^{+}(1)&= Y_{i}^{+}(0)+X_i(1)-\left(\inf_{0\leq t\leq 1}(Y_{i}^{+}(0)+X_i(t))\right)\wedge 0\\
&\leq Y_{i}^{+}(0)+X_i(1)-\left(\inf_{0\leq t\leq 1} X_i(1)\right)\wedge 0\\
&= Y_{i}^{+}(0)+e_{i}^{T}C\mathbf{B}(1)-\inf_{0\leq t\leq 1}e_{i}^{T}C\mathbf{B}(t).\end{align*}%
Let us write $\mathbf{U}=C\mathbf{B}(1)-\inf_{0\leq t\leq 1}C\mathbf{B}(t)$, so whenever $\mathbf{Y}(0)=\mathbf{y}\leq b_1\mathbf{1}$, we have
\begin{equation*}
\mathbf{Y}(1;\mathbf{y})\leq _{st}(b_{1}+b_{1}\left\Vert \mathbf{U}%
\right\Vert _{\infty })\mathbf{1}.
\end{equation*}%
Now we define $\Theta_d>0$ as
$$P(\Theta_d>t)=\min\left(1,\frac{P(b_1+b_1\|\mathbf{U}\|_\infty>t)}{1-p}\right)\text{ for all }t>0.$$
Recall that $\tau_k(i)$ is a copy of $\tau^+(\mathbf{Y}(1;b_1\mathbf{1}))$, and $\tau^+(\mathbf{y}^1)\geq_{st}\tau^+(\mathbf{y}^2)$ whenever $\mathbf{y}^1\geq\mathbf{y}^2$. Therefore,
$$P(\tau_k(i)> t)\leq P(\tau^+((b_1+b_1\|\mathbf{U}\|_\infty)\mathbf{1})>t)\leq (1-p)P(\tau^+(\Theta_d\mathbf{1})>t).$$
For all $t>0$,
$$P(\tilde{\tau}_{k}(i)> t)=P(\tau_k(i)> t|\psi_{k}(i)=0)\leq \frac{P(\tau_k(i)> t)}{1-p}\leq P(\tau^+(\Theta_d\mathbf{1})>t).$$
Now we show that $\Theta$ satisfies \eqref{Bnd_fhi_d}. Note that%
\begin{equation*}
E\exp \left( \theta \left\Vert \mathbf{U}\right\Vert _{\infty }\right)
=\int_{0}^{\infty }\theta \exp \left( \theta t\right) P\left( \left\Vert
\mathbf{U}\right\Vert _{\infty }>t\right) dt+1
\end{equation*}%
If $t=s\log \left( 1+d\right) $, breaking the integral on $[0,1/\log
(1+d)^{1/3}]$ and $(1/\log (1+d)^{1/3},\infty )$, we obtain
\begin{eqnarray*}
&&\int_{0}^{\infty }\theta \exp \left( \theta t\right) P\left( \left\Vert
\mathbf{U}\right\Vert _{\infty }>t\right) dt \\
&=&\theta \log \left( 1+d\right) \int_{0}^{\infty }\exp \left( s\theta \log
\left( 1+d\right) \right) P\left( \left\Vert \mathbf{U}\right\Vert _{\infty
}>s\log \left( 1+d\right) \right) ds \\
&\leq &\theta \log \left( 1+d\right) \exp \left( \theta \log \left(
1+d\right) ^{2/3}\right)  \\
&&+\theta \log \left( 1+d\right) \int_{1/\log \left( 1+d\right)
^{1/3}}^{\infty }\exp \left( s\theta \log
\left( 1+d\right) \right) P\left( \left\Vert \mathbf{U}\right\Vert _{\infty }>s\log
\left( 1+d\right) \right) ds.
\end{eqnarray*}%
Since $U_{i}=e_{i}^{T}C\mathbf{B}(1)-\inf_{0\leq t\leq 1}e_{i}^{T}C\mathbf{B}(t)=\sup_{0\leq t\leq 1}e_i^TC(\mathbf{B}(1)-\mathbf{B}(t))$ is equal in distribution to $\sup_{0\leq t\leq 1}e_{i}^{T}C\mathbf{B}(t) $, by the reflection principle for Brownian motions, we have
\begin{equation*}
P\left( U_{i}>t\right) =2\int_{t}^{\infty }\frac{1}{\sqrt{2\pi }\sigma _{i}}%
\exp (-r^{2}/2b_{1}\sigma _{i}^{2})dr\leq \frac{2\sigma _{i}}{t\sqrt{2\pi }}%
\exp (-t^{2}/2\sigma _{i}^{2})\leq \frac{2\sqrt{b_{0}}}{t\sqrt{2\pi }}\exp
(-t^{2}/2b_{0}).
\end{equation*}%
Therefore,
\begin{eqnarray*}
&&\int_{1/\log \left( 1+d\right) ^{1/3}}^{\infty }\exp \left( s\theta \log
\left( 1+d\right) \right) P\left( \left\Vert \mathbf{%
U}\right\Vert _{\infty }>s\log \left( 1+d\right) \right) ds \\
&\leq &d\int_{1/\log \left( 1+d\right) ^{1/3}}^{\infty }\frac{2\sqrt{b_{0}}}{%
s\log (1+d)\sqrt{2\pi }}\exp \left( -\frac{s^{2}\log \left( 1+d\right) ^{2}}{%
2b_{0}}+s\theta \log
\left( 1+d\right)\right) ds \\
&\leq &\frac{2\sqrt{b_{0}}d}{\log (1+d)^{2/3}\sqrt{2\pi }}\exp \left( -\log \left(
1+d\right) ^{4/3}/3b_{0}\right) ,
\end{eqnarray*}%
as $\theta=o(1)$ and hence $s\theta \log
\left( 1+d\right)\leq s^{2}\log \left( 1+d\right) ^{2}/6b_0$ for $d$ that is large enough.
%
%
%
%
%A similar analysis for the contribution of $\left\Vert \mathbf{X}\left(
%1\right) ^{+}\right\Vert _{\infty }$ allows us to
Therefore, we conclude that
\begin{equation*}
\phi _{d}\left( \theta \right) \leq 1+2(1-p)^{-1}\theta \log \left(
1+d\right) \exp \left( \theta \log \left( 1+d\right) ^{2/3}\right) +\theta
O\left( d\exp \left( -\log \left( 1+d\right) ^{4/3}/3b_{0}\right)
\right) .
\end{equation*}
\end{proof}

\bigskip\begin{proof}[Proof of Lemma \ref{Lem_LB_DT}]
Observe that%
\begin{eqnarray*}
&&E\exp \left( \chi \left( \theta \right) \left( \tau ^{+}\left( \Lambda
_{d}^{k}\left( j\right) \mathbf{1}\right) +1\right) \right) \leq \exp
\left( \chi \left( \theta \right) \right) E\exp \left( h\left( \Lambda
_{d}^{k}\left( j\right) \mathbf{1,}\theta \right) \right) \\
&\leq& \exp \left( \chi \left( \theta \right) +\varepsilon \log d\right)
E\exp \left( \theta \Lambda _{d}\right) =\exp \left( \chi \left( \theta
\right) +\varepsilon \log d\right) \phi _{d}\left( \theta \right) .
\end{eqnarray*}%
Therefore,
\begin{equation}
E\exp \left( \chi \left( \theta \right) \xi \right) \leq \left( \frac{\phi
_{d}\left( \theta \right) \exp \left( \chi \left( \theta \right)
+\varepsilon \log \left( d\right) \right) p}{1-(1-p)\phi _{d}\left(
\theta \right) \exp \left( \chi \left( \theta \right) +\varepsilon \log
\left( d\right) \right) }\right) ^{d}.  \label{In_1_Mgale}
\end{equation}%
Since $\eta^n(\mathbf{y})\leq \tau^+(y)+\xi_1+...+\xi_n$ where $\tau^+(y)$, $\xi_1$,..., $\xi_n$ are all independent of each other, and $E\exp(\chi(\theta)\tau^+(\mathbf{y}))\leq \exp(h(\mathbf{y};\theta,\varepsilon))$ by Lemma \ref{Lem_LB_CT}, we have
\begin{equation*}
E\exp \left( \chi \left( \theta \right) \eta ^{n}\left( \mathbf{y}\right)
\right) \leq \exp \left( h\left( \mathbf{y};\theta,\varepsilon \right) \right) \left(
\frac{\phi _{d}\left( \theta \right) \exp \left( \chi(\theta) +\varepsilon \log
\left( d\right) \right) p}{1-(1-p)\phi _{d}\left( \theta \right)
\exp \left( \chi(\theta) +\varepsilon \log \left( d\right) \right) }\right) ^{nd}.
\end{equation*}
Since the function $f(x)\doteq xp/(1-(1-p)x)$ is increasing in $x$ for $x\leq 1/(1-p)$, under \eqref{Select_Theta}, we have
$$\frac{[\phi _{d}\left( \theta \right) \exp \left( \chi(\theta) +\varepsilon \log
\left( d\right) \right) ]p}{1-(1-p)[\phi _{d}\left( \theta \right)
\exp \left( \chi(\theta) +\varepsilon \log \left( d\right) \right)] }\leq f\left(\frac{1}{(1-p)(1+p)}\right)=\frac{1}{1-p},$$
and we are done.
\end{proof}

We conclude this section with the proof of Lemma \ref{Lem_LB_Nt_Bound}.

\begin{proof}[Proof of Lemma \protect\ref{Lem_LB_Nt_Bound}]
Let us write $\xi _{0}=\tau^{+}\left( \mathbf{y}\right) $, $A_{-1}=0$,
and $A_{n}=\xi _{0}+...+\xi _{n}$
\begin{equation*}
\bar{N}\left( t\right) =\sup \{n\geq -1:A_{n}\leq t\},
\end{equation*}%
so that $\bar{N}\left( \cdot \right) $ is a delayed renewal process.
Following Lemma 6 and \eqref{Stoch_Dom}, we have $\eta^n(y)\leq_{st} \tau^+(y) +\xi_1 +...+\xi_n=A_n$. On the other hand, for $A_n\leq t<A_{n+1}$, by defintion, $\bar{N}(t)=n$ and $\mathcal{N}(t;\mathbf{y})\geq\mathcal{N}(A_n;\mathbf{y})\geq_{st} \mathcal{N}(\eta^n(\mathbf{y});\mathbf{y}) = n$.
So we have $\bar{N}\left( t\right) \leq _{st}\mathcal{N}\left( t;%
\mathbf{y}\right) $ and, therefore, for any $\beta >0$,%
\begin{equation*}
E\exp \left( -\beta \mathcal{N}\left( t;\mathbf{y}\right) \right) \leq E\exp
\left( -\beta \bar{N}\left( t\right) \right) .
\end{equation*}%
According to Lemma 6 and Lemma 7,
\begin{equation*}
M_{n}=\exp \left( \chi \left( \theta \right) A_{n}-h\left( \mathbf{y;}\theta,\varepsilon
\right) \right) \left( 1-p\right) ^{dn}
\end{equation*}%
is a non-negative supermartingale and, therefore,
\begin{equation*}
1\geq EM_{\bar{N}\left( t\right) +1}\geq E\left( \exp \left( \chi \left(
\theta \right) t-h\left( \mathbf{y;}\theta,\varepsilon\right) \right) \left(
1-p\right) ^{\left( \bar{N}\left( t\right) +1\right) d}\right) ,
\end{equation*}%
thereby concluding that
\begin{equation*}
E\left( \left( 1-p\right) ^{d\cdot \mathcal{N}\left( t;\mathbf{y}\right) }\right)\leq E\left( \left( 1-p\right) ^{d\cdot \bar{N}\left( t\right) }\right) \leq
\exp \left( h\left( \mathbf{y;}\theta ,\varepsilon\right) \right) \cdot \exp \left(
-\chi \left( \theta \right) t\right) \cdot \left( 1-p\right) ^{-d},
\end{equation*}%
and the result follows.
\end{proof}

\bigskip

\section{Step 3: Concluding the Proof of Theorem \ref{Thm_Main}%
\label{Section_Step_3}}

For any $f\in\mathcal{L}$,
\begin{align*}
&  E\left\vert f\left(  \mathbf{Y}\left(  t;\mathbf{y}\right)  \right)
-f\left(  \mathbf{Y}\left(  t;\mathbf{Y}\left(  \infty\right)  \right)
\right)  \right\vert \\
&  \leq E\left\vert \left\vert \mathbf{Y}\left(  t;\mathbf{y}\right)
-\mathbf{Y}\left(  t;\mathbf{Y}\left(  \infty\right)  \right)  \right\vert
\right\vert _{\infty}\\
&  \leq E\left\vert \left\vert \mathbf{Y}\left(  t;\mathbf{y}\right)
-\mathbf{Y}\left(  t;\mathbf{0}\right)  \right\vert \right\vert _{1}%
+E\left\vert \left\vert \mathbf{Y}\left(  t;\mathbf{0}\right)  -\mathbf{Y}%
\left(  t;\mathbf{Y}\left(  \infty\right)  \right)  \right\vert \right\vert
_{1}.
\end{align*}
Therefore, by Lemma \ref{Lem_DB1}, we have that
\begin{align}
&  ~~E\left\vert f\left(  \mathbf{Y}\left(  t;\mathbf{y}\right)  \right)
-f\left(  \mathbf{Y}\left(  t;\mathbf{Y}\left(  \infty\right)  \right)
\right)  \right\vert \label{B}\\
\leq &  ~d\cdot\kappa_{0}\cdot\left(  E\left(  \left(  1-\beta_{0}\right)
^{\mathcal{N}\left(  t;\mathbf{y}\right)  }\left\Vert \mathbf{y}\right\Vert
_{1}\right)  +E\left(  \left(  1-\beta_{0}\right)  ^{\mathcal{N}\left(
t;\mathbf{Y}\left(  \infty\right)  \right)  }\left\Vert \mathbf{Y}\left(
\infty\right)  \right\Vert _{1}\right)  \right)  .\nonumber
\end{align}
For the last term, according to the Cauchy-Schwarz inequality, we have that
\[
E\left(  \left(  1-\beta_{0}\right)  ^{\mathcal{N}\left(  t;\mathbf{Y}\left(
\infty\right)  \right)  }\left\Vert \mathbf{Y}\left(  \infty\right)
\right\Vert _{1}\right)  \leq E^{1/2}\left(  \left\Vert \mathbf{Y}\left(
\infty\right)  \right\Vert _{1}^{2}\right)  E^{1/2}\left(  \left(  1-\beta
_{0}\right)  ^{2\mathcal{N}\left(  t;\mathbf{Y}\left(  \infty\right)  \right)
}\right)  .
\]
Following the stochastic domination result (\ref{SD_KW_96}) and the fact that
$R^{-1}\geq I$, we have
\[
\left\Vert \mathbf{Y}\left(  \infty\right)  \right\Vert _{1}\leq\left\Vert
R^{-1}\mathbf{Y}\left(  \infty\right)  \right\Vert _{1}\leq\left\Vert
R^{-1}\mathbf{Y}^{+}\left(  \infty\right)  \right\Vert _{1}\leq\left\Vert
R^{-1}\mathbf{1}\right\Vert _{1}\left\Vert \mathbf{Y}^{+}\left(  \infty\right)
\right\Vert _{1}\leq db_{1}\left\Vert \mathbf{Y}^{+}\left(  \infty\right)
\right\Vert _{1}.
\]
Moreover,
\[
\left\Vert \mathbf{Y}^{+}\left(  \infty\right)  \right\Vert _{1}^{2}=\left(
\sum_{i=1}^{d}Y_{i}^{+}\left(  \infty\right)  \right)  ^{2}\leq d\left(
\sum_{i=1}^{d}Y_{i}^{+}\left(  \infty\right)  ^{2}\right) .
\]
By definition, $Y_{i}^{+}\left(  \infty\right)  $ represents a one-dimensional
RBM, with drift $-(\mu_{i}^{+}-\mu_{i})$ and variance $\sigma_{i}^{2}$, in its
steady state. So $Y_{i}^{+}(\infty)$ follows an exponential distribution with
mean $\sigma_{i}^{2}/2\left(  \mu_{i}^{+}-\mu_{i}\right)  $, and therefore
(recall that we have chosen $\delta_{1}=\delta_{0}\beta_{0}/2\kappa_{0}$),%
\[
E\left(  Y_{i}^{+}\left(  \infty\right)^2\right)  =\frac{\sigma_{i}^{4}%
}{\left(  \mu_{i}^{+}-\mu_{i}\right) ^{2} }=\frac{\sigma_{i}^{4}}{\delta_{1}^{2}}%
\leq\left(\frac{2\sigma_{i}^{2}\kappa_{0}}{\delta_{0}\beta_{0}}\right)^{2}\leq
\left(\frac
{2b_{0}\kappa_{0}}{\delta_{0}\beta_{0}}\right)^{2},
\]
which concludes that
\begin{equation}
E^{1/2}\left(  \left\Vert \mathbf{Y}\left(  \infty\right)  \right\Vert
_{1}^{2}\right)  \leq 2\cdot d^2\cdot\frac{\kappa_{0}}%
{\delta_{0}\beta_{0}}b_{0}b_1 . \label{B_Aux_LL}%
\end{equation}
Next, invoking Proposition \ref{prop:A2} with $\beta\in\left(  0,\min\left(
\beta_{0},1/3\right)  \cdot1/3\right)  $, we can  guarantee that $\left(
1-\beta\right)  \geq\left(  1-\beta_{0}\right)  ^{2}$, and therefore conclude
that%
\begin{align}
E\left(  \left(  1-\beta_{0}\right)  ^{2\mathcal{N}\left(  t;\mathbf{Y}\left(
\infty\right)  \right)  }\right)   &  \leq E\left[  \exp\left(  \zeta
_{0}\left\Vert \mathbf{Y}\left(  \infty\right)  \right\Vert _{\infty}%
/(d^{3}\log\left(  d\right)  )+\beta/d^{2}\right)  \right]  \times
\label{BBB}\\
&  ~\exp\left(  -\zeta_{1}t/(d^{4}\log\left(  d\right)  )\right)  \cdot\left(
1-\beta\right)  ^{-1},\nonumber
\end{align}
where%
\[
\zeta_{0}=\frac{\delta_{1}\cdot\beta}{2\max_{i=1}^{d}\sigma_{i}^{2}},\text{
\ }\zeta_{1}=\frac{\delta_{1}^{2}\cdot\beta}{16\max_{i=1}^{d}\sigma_{i}^{2}}.
\]
Once again, using the stochastic domination result (\ref{SD_KW_96}), we have
that%
\[
\left\Vert \mathbf{Y}\left(  \infty\right)  \right\Vert _{\infty}%
\leq\left\Vert R^{-1}\mathbf{Y}\left(  \infty\right)  \right\Vert _{\infty
}\leq\left\Vert R^{-1}\mathbf{Y}^{+}\left(  \infty\right)  \right\Vert
_{\infty}\leq \|\mathbf{1}R^{-1}\|_\infty\|\mathbf{Y}^+(\infty)\|_\infty\leq d b_{1}\left\Vert \mathbf{Y}^{+}\left(  \infty\right)  \right\Vert
_{\infty}.
\]
Observe that
\[
P\left(  \left\Vert \mathbf{Y}^{+}\left(  \infty\right)  \right\Vert _{\infty
}>t\right)  \leq\sum_{i=1}^{d}P\left(  Y_{i}^{+}\left(  \infty\right)
>t\right)  \leq d\exp\left(  -\frac{2\delta_{1}}{\max_{i=1}^{d}\sigma_{i}^{2}%
}t\right)  .
\]
We conclude that
\begin{align*}
&  E[\exp\left(  \zeta_{0}\left\Vert \mathbf{Y}\left(  \infty\right)
\right\Vert _{\infty}/(d^{3}\log\left(  d\right)  )\right)  ]\\
&  \leq\frac{\zeta_{0}}{d^{3}\log\left(  d\right)  }\int_{0}^{\infty}%
\exp\left(  \frac{\zeta_{0}}{(d^{3}\log\left(  d\right)  }t\right)  P\left(
\left\Vert \mathbf{Y}^{+}\left(  \infty\right)  \right\Vert _{\infty
}>t\right)  dt+1\\
&  \leq\frac{\zeta_{0}d}{d^{3}\log\left(  d\right)  }\int_{0}^{\infty}%
\exp\left(  \frac{\zeta_{0}}{(d^{3}\log\left(  d\right)  }t-\frac{2\delta_{1}%
}{\max_{i=1}^{d}\sigma_{i}^{2}db_{1}}t\right)  dt+1\\
& = \frac{\zeta_{0}d^{2}}{d^{3}\log\left(  d\right)  }\int_{0}^{\infty}%
\exp\left(  \frac{\zeta_{0}}{(d^{2}\log\left(  d\right)  }t-\frac{2\delta_{1}%
}{\max_{i=1}^{d}\sigma_{i}^{2}b_{1}}t\right)  dt+1.
\end{align*}
Hence, using this estimate, together with (\ref{B_Aux_LL}) and (\ref{BBB}) we
conclude that
\[
E^{1/2}\left(  \left\Vert \mathbf{Y}\left(  \infty\right)  \right\Vert
_{1}^{2}\right)  E^{1/2}\left(  \left(  1-\beta_{0}\right)  ^{2\mathcal{N}%
\left(  t;\mathbf{Y}\left(  \infty\right)  \right)  }\right)  \leq3\cdot
d^2\cdot\frac{\kappa_{0}}{\delta_{0}\beta_{0}}b_{0}b_1%
\exp\left(  -\frac{\zeta_{1}}{2(d^{4}\log\left(  d\right)  )}t\right)  .
\]
On the other hand, directly from Proposition \ref{prop:A2}, we obtain (with
the same selection of $\beta$, in particular $\beta\in\left(  0,1/3\right)  $)
that
\[
\left\Vert \mathbf{y}\right\Vert _{1}E\left(  \left(  1-\beta_{0}\right)
^{\mathcal{N}\left(  t;\mathbf{y}\right)  }\right)  \leq3\cdot\left\Vert
\mathbf{y}\right\Vert _{1}\exp\left(  \zeta_{0}\left\Vert \mathbf{y}%
\right\Vert _{\infty}/(d^{3}\log\left(  d\right)  )+\beta/d^{2}\right)
\cdot\exp\left(  -\zeta_{1}t/(d^{4}\log\left(  d\right)  )\right)  .
\]
Putting these estimates together in (\ref{B}), we obtain that
\begin{align*}
&  E\left\vert f\left(  \mathbf{Y}\left(  t;\mathbf{y}\right)  \right)
-f\left(  \mathbf{Y}\left(  t;\mathbf{Y}\left(  \infty\right)  \right)
\right)  \right\vert \\
&  \leq3\cdot d^2\cdot\exp\left(  -\frac{\zeta_{1}}{d^{4}\log\left(  d\right)
}t\right)  \left(  \left\Vert \mathbf{y}\right\Vert _{1}\cdot\kappa_{0}%
\cdot\exp\left(  \zeta_{0}\frac{\left\Vert \mathbf{y}\right\Vert _{\infty}%
}{d^{3}\log\left(  d\right)  }\right)  +\frac{\kappa_{0}}{\delta_{0}\beta_{0}}b_{0}b_1\right)  .
\end{align*}

\section{Conclusions and Final Remarks}

\label{Sec: conclude}

We have shown that the relaxation time of the Harrison-Reiman RBM, under the
uniformity conditions A1)-A3), is polynomial in the underlying dimension $d$.
It is of interest to ponder how one may improve upon or extend the type of analysis that
we have presented.

We have used the
Wasserstein distance of order one only for convenience and because it
already covers functions (such as workload, maximum workload, average
workload), which are natural in practice. In terms of applications to Monte
Carlo, we note that most of the literature on rigorous analysis of rates of
convergence to stationarity actually focuses on total variation convergence
or related notions, which in some sense (e.g. in the sense of dealing with
moments of higher order) are even more restrictive than the notion that we
consider. So, we think that our notion strikes the right balance
between convenience in terms of tractability and flexibility in terms of
applicability.  We also note that we could actually handle
locally Lipschitz functions, i.e. functions satisfying that there exists a function $\kappa(\cdot)>0$ such that 
\begin{equation*}
\left\vert f\left( x\right) -f\left( x^{\prime }\right) \right\vert \leq
\kappa \left( x\right) \left\Vert x-x^{\prime }\right\Vert_{1}, \text{for all }x, x'\in\mathbb{R}^d.
\end{equation*}%
Assume that $E\left[ \kappa \left( \mathbf{Y}(\infty )\right) ^{p}\right] <\infty $
for any $p>0$. In this case, we have that 
\begin{equation*}
E\left\vert f\left( \mathbf{Y}(\infty)\right) -f\left( \mathbf{Y}(t)\right) \right\vert \leq
 E\left[\kappa \left( \mathbf{Y}(\infty)\right) ^{p}\right]^{1/p}\cdot E\left[\left\Vert
\mathbf{Y}(\infty)-\mathbf{Y}(t)\right\Vert _{1}^{q}\right]^{1/q}
\end{equation*}%
with $1/p +	1/q =1$. Note that $\|\mathbf{Y}(\infty)\|_1$ can be bounded by the sum of exponential random variables ($R^{-1}\mathbf{Y}^+(\infty)$) and hence $E[\|\mathbf{Y}(\infty)\|^{p}_1]$ is finite and polynomial in $d$ for any $p$. Therefore, our approach can be easily adapted to estimate an explicit bound for
$
E\left[\left\Vert \mathbf{Y}(\infty)-\mathbf{Y}(t)\right\Vert _1^{q}\right]
$
and for $E\left[\kappa \left( \mathbf{Y}(\infty)\right) ^{p}\right]$ with $f$ that is a quadratic or polynomial function such as $f(x)=x'Ax$ where $A$ is a matrix. 

The main bottleneck in our analysis arises from Step 2 outlined in Section
\ref{Sect_Outline_Strategy}. Namely, our analysis of $E\left[  \left(
1-\beta\right)  ^{\mathcal{N}\left(  t,\mathbf{y}\right)  }\right]  $, where
$\mathcal{N}\left(  t,\mathbf{y}\right)  $ counts the number of
\textquotedblleft tours\textquotedblright\ completed by $\mathbf{Y}\left(
\cdot;\mathbf{y}\right)  $ in the interval $[0,t]$. The first tour starts at
time zero, and we let one unit of time elapse. Then we terminate the tour at
the first time, $\eta^{1}\left(  \mathbf{y}\right)  $, by which all the
coordinates have hit zero at least once. The second tour starts right at time
$\eta^{1}\left(  \mathbf{y}\right)  $ and it proceeds just as we indicated for
the first tour and so on. The time $\eta^{1}\left(  \mathbf{y}\right)  $ is
therefore basically the maximum of $d$ stopping times $\eta_{k}^{1}%
(\mathbf{y})$ ($k=1,2,...,d$), corresponding to the first time at which
certain coordinate hits zero in the tour. We upper bound $\eta^{1}\left(
\mathbf{y}\right)  $ by the sum of $d$ i.i.d. random variables. This probably
has the effect of slowing the count of $\mathcal{N}\left(  t,\mathbf{y}%
\right)  $ by a factor of $d$ and therefore, at this point, we need to run the
process $O\left(  d\right)  $ units of time to compensate for this factor.

In addition, in our analysis of $\mathcal{N}\left(  t,\mathbf{y}\right)  $, we
estimate the time it takes for the underlying RBM to visit a compact set
around the origin (say all of the coordinates being less than unity). We
introduce an upper bound process with orthogonal reflection and construct a
Lyapunov function to estimate the moment generating function of the time
$\tau^{+}\left(  \mathbf{y}\right)  $ -- the time it takes for the upper bound
process $\mathbf{Y}^{+}$ to visit the compact set.

Our choice of the Lyapunov function is appropriate for bounding the moment
generating function of $\tau^{+}\left(  \mathbf{y}\right)  $ in a neighborhood
of the origin. Intuitively, this amounts to roughly capturing the behavior of
$E\left(  \tau^{+}\left(  \mathbf{y}\right)  \right)  $, which behaves (up to
constants) as $\left\Vert \mathbf{y}\right\Vert _{\infty}$. Since the bound is
obtained with the idea of allowing the dimension grow arbitrarily large, the
Lyapunov function introduces a scaling of the form $\theta\left\Vert
\mathbf{y}\right\Vert _{\infty}$ for $\theta=O\left(  1/d\right)  $, to
normalize the contribution of $\left\Vert \mathbf{y}\right\Vert _{\infty}$. In
addition, we introduce a quadratic behavior in the construction of the
Lyapunov function to deal with the reflective boundaries, and a mollification
parameter $\varepsilon$ to smoothly approximate the (non-smooth) function
$\left\Vert \mathbf{y}\right\Vert _{\infty}$. Our choice of $\theta$ forces us
to choose $\varepsilon=O\left(  1/(d^{2}\log\left(  d\right)  )\right)  $. The
combination of all of these factors (and other algebraic manipulations) leads
to the relaxation of time roughly of order $O\left(  d^{4}\right)  $
(neglecting logarithmic factors).

In the case of $Q=0$, under our current assumptions, one can deal directly
with the maximum of the times for the coordinates to hit zero, and there is no
need for introducing an upper bound process. In such a case, the relaxation
time is easily seen to be $O\left(  \log\left(  d\right)  \right)  $.

We believe that under additional structural assumptions, for example in the
setting of feedforward networks ($Q$ is a triangular matrix with at least one row of zeroes), it is likely that the techniques introduced
in this paper can be used to show that the relaxation time can be
significantly improved. We plan to pursue these investigations in future
work

Finally, we believe that the basic structure of our proof technique may be
applicable to other processes beyond RBM. The key parts of the analysis
involve (1) Lemma 2 that corresponds to Step 1 in the roadmap of the proof,
and (2) the introduction of a system which serves as an upper bound (in our case
a system with orthogonal reflection). Part (2) is crucial to the construction
of estimates for the return time to a suitably defined compact set (corresponding to
Step 2 in the roadmap). Under analogous assumptions to those discussed in
this paper, these parts can be obtained in the setting of time-varying RBM
with periodic input, if one is interested in estimating steady-state
expectations for the discrete process sampled along integer multiples of the
period. Another setting in which these parts can also be obtained is that of
generalized Jackson networks. The upper bound process in that case, however,
should be defined in terms of a so-called autonomous network (see, for
example, \cite{BlanchetChen_2018} and the references therein).\newline

%\textbf{Acknowledgement:} Blanchet acknowledges support from the NSF through
%the grant DMS-132055. Chen acknowledges support from the NSF through the grant CMMI 1538102.

\end{document}